\newtheorem{thm}{Theorem}[section]
\newtheorem{prop}[thm]{Proposition}
\newtheorem{df}[thm]{Definition}
\newtheorem{lem}[thm]{Lemma}
\newtheorem{claim}[thm]{Claim}
\newtheorem{rem}[thm]{Remark}
\newtheorem*{thm1}{Theorem A}
\def\R{\mathbb{R}}
\def\FF{\mathcal{F}}
\def\PP{\mathcal{P}}
\def\UU{\mathcal{U}}
\def\diam{\text{\rm diam}}
\def\Leb{\text{\rm Leb}}
\def\AA{\mathcal{A}}
\def\BB{\mathcal{B}}
\def\CC{\mathcal{C}}
\def\VV{\mathcal{V}}
\numberwithin{equation}{section}
\title{Variational principle for   weighted  amenable  topological  pressure}
\author{Jiao Yang, Ercai Chen,  Rui Yang*  and Xiaoyi Yang
}
\address%[authorlabel1]
{1.School of Mathematical Sciences and Institute of Mathematics, Nanjing Normal University, Nanjing 210023, Jiangsu, P.R.China}
\email{jiaoyang6667@126.com;ecchen@njnu.edu.cn;zkyangrui2015@163.com; 1197380824@qq.com}
\date{}
\begin{document}

\renewcommand{\thefootnote}{}
\footnote{2020 \emph{Mathematics Subject Classification}: 37A05,  37A35, 37B40, 94A34.}
\footnotetext{\emph{Key words and phrases}: Weighted   amenable  topological pressure; Weighted   amenable  measure-theoretic entropy; Variational principle;Equilibrium states}
\footnote{*corresponding author}

\begin{abstract}
	This paper aims to  investigate the  thermodynamic formalism of weighted amenable topological pressure for factor maps  of amenable group actions. Following the  approach of Tsukamoto [\emph{Ergodic Theory Dynam. Syst.} \textbf{43}(2023), 1004-1034.], we introduce the notion of  weighted   amenable   topological pressure for  factor maps  of  amenable group actions, and establish a variational principle for it. As the application of variational principle, we show weighted  amenable measure-theoretic entropy can be determined by weighted  amenable topological pressure. Equilibrium states of  weighted topological pressure are also involved.
	
\end{abstract}
\maketitle

\section{Introduction}
For  measure-preserving systems,  measure-theoretic entropy of
invariant measures    was introduced by Kolmogorov \cite{kol58} and Sinal \cite{s59}. The topological entropy was first introduced by Adler, Konheim and  McAndrew \cite{akm65} for topological dynamical system to capture the topological complexity of systems.  The  classical variational principle \cite{goodw69,din70,goodm71,mis75}  provides the bridge between  ergodic theory and  topological dynamics:  
   $$h_{top}(X,T)=\sup_{\mu \in M(X,T)}h_{\mu}(T),$$
   where  $h_{top}(X,T),  h_{\mu}(T)$ and $M(X,T)$  respectively  denote the topological entropy  of $X$, the measure-theoretic entropy of $\mu$,  the set of $T$-invariant Borel probability measure on $X$.  Inspired by statistical mechanics, Ruelle \cite{rue73} introduced the notion of topological pressure for expansive topological dynamical systems. Later, Walters extended this notion to general dynamical systems and proved a variational principle for topological pressure, which states  that for  any continuous function $f$ on $X$
   $$P(T,f)=\sup_{\mu \in M(X,T)}\{h_{\mu}(T)+\int fd\mu\},$$ 
   where  $P(T,f)$   denotes the topological pressure of $f$.  It turns out that topological pressure  plays a vital role  in dimension theory and becomes a  fundamental tool  in other fields of dynamical systems. Moreover, topological pressure, the associated variational principle and equilibrium measures are the hardcore  part of thermodynamic formalism.  Since then,  looking for proper variational principles  for topological entropy in terms of  general  group action  has gained abundant attentions.   Especially for (countable) amenable group actions,  entropies in both topological and measure-theoretical  settings were  given in \cite{mp82,ow87}.  The  variational principle for amenable topological entropy was established in \cite{mp82,st80,m85,kl16}. Later, Yan \cite{y15} established conditional variational principle for amenable conditional entropy. The variational principles on compact subsets for amenable Bowen and packing  topological entropies were established by the authors \cite{zc16, hlz20}, \cite{dzz20}, respectively.  The authors \cite{hyz11,ly12,dz15} obtained the local variational principles  for  topological entropy of  open covers under amenable group actions. Besides, a Ledrappier's type
   variational principle  for relative topological tail entropy for factor maps of amenable group actions was established by Zhang and Zhu  \cite{zz23}.

   In 2016,  for $\mathbb{Z}$-actions  Feng and Huang posed the following  question \cite[Question 1.1]{fw1616}:
   
   Question:  How can one define a meaningful term  $P^{(a_1,a_2)}(T,f)$ such that the following variational principle holds?
   $$P^{(a_1,a_2)}(T,f)=\sup_{\mu \in M(X,T)}\{a_1h_{\mu}(T)+a_2h_{\pi_{*}\mu}(S)+\int fd\mu\},$$
   where $\pi$ is the factor map
    between the systems $(X,T)$ and $(Y,S)$, $a_1, a_2$ are two real number with $a_1>0$ and $a_2\geq 0$.  $\pi_{*}\mu$ is the push-forward of $\mu$.
   
   For early exploration toward 
   this  question, 
   the authors \cite{f11, bf12} defined such a  quantity $P^{(a_1,a_2)}(T,f)$ 
     for relative and sub-additive thermodynamic
    formalisms  for factor maps between subshifts  over finite alphabets. Their method highly relies on certain special property of subshifts
   and fails  to  extend to general topological dynamical systems.   Inspired by fractal geometry of self-affine carpets and sponges \cite{bed84,mc84,kp96}, Feng and Huang defined the weighted topological pressure   by means of Carath\'eodory-Pesin structures \cite{b73,p97}  and proved  a variational principle for it \cite[Theorem 1.4]{fw1616}. The random case was considered by Yang et al. \cite{yclz22}.  Later, Zhu \cite{z22} introduced weighted topological entropy for a given open  cover and weighted measure-theoretic entropy of invariant measures for a given  open cover.  The two kinds of different entropies are related by  a local variational principle \cite[Theorem 2.9]{z22}. Moreover,  Zhu  showed the weighted topological entropy(taking  supremum for the weighted topological entropy  over open covers) coincides with the entropy defined by Feng and Huang.  Motivated by the study of mean Hausdorff dimension, a quantity introduced by Lindenstrauss and Tsukamoto  by dynamicalizing the definition of Hausdorff dimension\cite{lt19}, of certain infinite dimensional fractals \cite{t22},  Tsukamoto \cite{t23}   gave a new approach for weighted topological pressure $P^{\omega}(\pi,T,f)$  whose definition is analogous to  the   spanning sets for classical topological pressure. Compared with \cite{fw1616}, Tsukamoto's  approach has more advantages in calculating, see \cite[Example 1.6]{t23}.    Furthermore, Tsukamoto  verified the  equivalence of the two  types of weighted  topological entropy  and established the following variational principle:  for any $0\leq \omega \leq 1$ and $f\in C(X,\mathbb{R})$
   $$P^{\omega}(\pi,T,f)=\sup_{\mu \in M(X,T)}\{\omega h_{\mu}(T)+(1-\omega)h_{\pi_{*}\mu}(S)+\omega \int fd\mu\}.$$
  The other topics involved weighted topological entropy can be found in \cite{zczz18,wh19,sxz20}.
   
   An natural question is whether we can establish variational principle for weighted   amenable  topological pressure. To this end, we define so-called weighted amenable topological pressure by following Tsukamoto's  approach,  and a variational principle is established for it.   Analogous to the  standard \emph{amplification trick} used in the classical variational principle for topological entropy \cite{mis75}, for $\mathbb{Z}$-actions Tsukamoto  vanished the ``small quantity" by  power property of weighted topological pressure. However, it may fail to obtain such power property for  amenable group action. Different  from  the proof of \cite[Proposition 4.1]{t23},  we   overcome this difficulty by employing the local entropy theory   of amenable group developed in \cite{hyz11,ly12,dz15}. We came up with  this method inspired the work of Gutman and \'Spiewak \cite{gs20} who have used local  variational principle for metric mean dimension.   For the upper bound, we borrow the  tool called the principle extension of amenable group \cite{d11,dh13,huc21} developed by Downarowicz and Huczek and reduce the proof to zero-dimensional case, then following some arguments\cite{mis75,t23} we produce a desired  invariant measure. Now,  we state the main result of this paper as follows:
     
     \begin{thm}\label{thm 1.1}
     	Let  $G$ be a countable, discrete amenable group. Suppose that
      $\pi:X\rightarrow Y$ is a factor map between two $G$-systems and $f\in C(X,\mathbb{R})$. Then for each $0\leq\omega\leq1$,
     	$$P^{\omega}(\pi,f,G)=\sup_{\mu\in M(X,G)} \left\{\omega h_{\mu}(G,X)+(1-\omega)h_{\pi_{*}\mu}(G,Y)+\omega\int_X fd\mu\right\},$$
     	 where $P^{\omega}(\pi,f,G)$ denotes  the  $\omega$-weighted amenable topological pressure of $f$.
     \end{thm}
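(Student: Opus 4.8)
I would prove the equality as two opposite inequalities; write $R(\nu):=\omega h_{\nu}(G,X)+(1-\omega)h_{\pi_{*}\nu}(G,Y)+\omega\int_{X}f\,d\nu$, so that the goal is $P^{\omega}(\pi,f,G)=\sup_{\nu}R(\nu)$.

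\emph{The lower bound $P^{\omega}(\pi,f,G)\ge\sup_{\nu}R(\nu)$.} First I would observe that each summand of $R$ is affine in $\nu$ (amenable measure entropy and the push-forward are affine, and so is $\nu\mapsto\int f\,d\nu$), so by the ergodic decomposition it is enough to prove $P^{\omega}(\pi,f,G)\ge R(\mu)$ for ergodic $\mu$. Fixing such a $\mu$ and $\varepsilon>0$, I would choose finite Borel partitions $\alpha$ of $X$ and $\beta$ of $Y$ with $\pi^{-1}\beta\preceq\alpha$, $h_{\mu}(G,\alpha)>h_{\mu}(G,X)-\varepsilon$ and $h_{\pi_{*}\mu}(G,\beta)>h_{\pi_{*}\mu}(G,Y)-\varepsilon$, and then apply the Shannon--McMillan--Breiman and pointwise ergodic theorems for amenable group actions (Lindenstrauss, along a tempered F{\o}lner sequence $\{F_{n}\}$) to $(\mu,\alpha)$, to $(\pi_{*}\mu,\beta)$, and to the Birkhoff sums of $f$. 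This yields, for large $n$, a set of $\mu$-measure near $1$ on which the atoms of $\bigvee_{g\in F_{n}}g^{-1}\alpha$ have $\mu$-mass $\approx e^{-|F_{n}|h_{\mu}(G,\alpha)}$, those of $\bigvee_{g\in F_{n}}g^{-1}\beta$ have $\pi_{*}\mu$-mass $\approx e^{-|F_{n}|h_{\pi_{*}\mu}(G,\beta)}$, and $\sum_{g\in F_{n}}f(gx)\approx|F_{n}|\int f\,d\mu$. Feeding these atoms into the definition of $P^{\omega}(\pi,f,G)$ --- which counts orbit pieces in $Y$ along $F_{n}$ and, over each of them, orbit pieces in the fibre of $X$ along a F{\o}lner set of relative density $\omega$, weighted by the Birkhoff sums of $f$ --- forces every admissible weighted configuration to have exponential growth rate (in $|F_{n}|$) at least $(1-\omega)h_{\pi_{*}\mu}(G,\beta)+\omega h_{\mu}(G,\alpha)+\omega\int f\,d\mu$; refining $\alpha,\beta$ and letting $\varepsilon\to0$ finishes this direction.

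\emph{The upper bound: reduction to zero dimension.} Here the setting genuinely differs from \cite{t23}, since a general amenable $G$ has no power (amplification) property \cite[Proposition 4.1]{t23} with which to absorb error terms. I would first pass to a zero-dimensional model by means of the principal-extension machinery of Downarowicz and Huczek \cite{d11,dh13,huc21}: take a principal zero-dimensional extension $p_{Y}:Y'\to Y$, form the fibre product $X\times_{Y}Y'$, and take a principal zero-dimensional extension $X'$ of it, obtaining $p_{X}:X'\to X$ and $\pi':X'\to Y'$ in a commuting square with $X',Y'$ zero-dimensional and $p_{X},p_{Y}$ principal. Projecting a weighted spanning configuration for $\pi'$ down through the uniformly continuous maps $p_{X},p_{Y}$ yields one for $\pi$ of no larger weighted size, so $P^{\omega}(\pi,f,G)\le P^{\omega}(\pi',f\circ p_{X},G)$; and since $(p_{X})_{*}$ maps $M(X',G)$ onto $M(X,G)$ while principality preserves $h_{\nu'}(G,X')=h_{(p_{X})_{*}\nu'}(G,X)$ and $h_{(\pi')_{*}\nu'}(G,Y')=h_{\pi_{*}(p_{X})_{*}\nu'}(G,Y)$ (the latter using $p_{Y}\circ\pi'=\pi\circ p_{X}$), the supremum over $M(X',G)$ of $R'(\nu'):=\omega h_{\nu'}(G,X')+(1-\omega)h_{(\pi')_{*}\nu'}(G,Y')+\omega\int_{X'}f\circ p_{X}\,d\nu'$ equals $\sup_{\nu}R(\nu)$. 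Thus it would suffice to prove $P^{\omega}(\pi',f\circ p_{X},G)\le\sup_{\nu'}R'(\nu')$.

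\emph{The upper bound in the zero-dimensional case, and the main obstacle.} In the zero-dimensional case one may work with finite clopen covers: the weighted cover-pressure for clopen partitions relates to the measure-theoretic weighted entropy with no residual scale error, $\mu\mapsto H_{\mu}(\bigvee_{g\in F}g^{-1}\xi)$ is continuous for each finite $F\subseteq G$ and clopen $\xi$, and $h_{\mu}(G,X')\ge h_{\mu}(G,\xi)$ for every clopen $\xi$. Fixing a scale $\varepsilon$, I would form empirical measures out of a near-optimal weighted $\varepsilon$-configuration for $\pi'$ by averaging, along F{\o}lner sets of densities $1$ and $\omega$, the uniform measures on its base part in $Y'$ and on its fibre parts in $X'$; passing to a weak-$*$ limit $\mu'_{\varepsilon}\in M(X',G)$ and running the Misiurewicz-type estimate \cite{mis75,t23}, now with the block decomposition replaced by Ornstein--Weiss quasi-tilings of the F{\o}lner sets and the remaining combinatorics supplied by the local entropy theory of amenable group actions \cite{hyz11,ly12,dz15} (in the spirit of Gutman and \'Spiewak \cite{gs20}), would give $R'(\mu'_{\varepsilon})$ at least the $\varepsilon$-scale approximant of $P^{\omega}(\pi',f\circ p_{X},G)$; letting $\varepsilon\to0$ on the right-hand side (the left supremum being fixed) then yields $\sup_{\nu'}R'(\nu')\ge P^{\omega}(\pi',f\circ p_{X},G)$, and the potential $f$ is handled throughout by uniform continuity, its F{\o}lner averages being nearly constant on the cells of a sufficiently fine cover. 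I expect the genuine difficulty to lie exactly here: reconciling the ``base'' scale (full F{\o}lner sets, governing $h_{(\pi')_{*}\mu'_{\varepsilon}}(G,Y')$) and the ``fibre'' scale (density-$\omega$ F{\o}lner sets, governing $h_{\mu'_{\varepsilon}}(G,X')$ and the integral of $f$) into a single invariant measure on $X'$ whose entropies account for both halves of the weighted count, and making the quasi-tiling estimates deliver the sharp exponent; producing the commuting square of principal zero-dimensional extensions in the first place (a relative Downarowicz--Huczek theorem, with principality inherited by the fibre product) is comparatively routine but still requires care.
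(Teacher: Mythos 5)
Your overall architecture for the upper bound is the paper's: reduce to the zero-dimensional case by a principal zero-dimensional extension of $Y$, a fibre product, and a further principal zero-dimensional extension of that (Lemmas 3.2--3.5 in the paper), and then run a Misiurewicz-type argument with an empirical measure. But there is a genuine gap at the heart of the zero-dimensional step, and it is tied to a misreading of the definition of $P^{\omega}$. In this paper (as in Tsukamoto's \cite{t23}), the weight $\omega$ does not enter as ``a F\o lner set of relative density $\omega$ in the fibre direction''; it enters as an exponent: one covers $Y$ by sets $V_i$ of small $d'_{F_n}$-diameter and sums $\bigl(P(\pi^{-1}(V_i),f,F_n,\epsilon)\bigr)^{\omega}$, where the inner quantity is an ordinary covering sum $\sum_j e^{\sup S_{F_n}f}$. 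Consequently the empirical measure you build cannot be an average of \emph{uniform} measures on base and fibre pieces: to make the argument close one must put mass proportional to $(Z_{F_n,A})^{\omega-1}e^{S_{F_n}f(x_B)}$ on a point $x_B$ maximizing $S_{F_n}f$ in each atom $B$ of the clopen partition $\mathcal{B}^{F_n}$ lying over $A\in\mathcal{A}^{F_n}$, where $Z_{F_n,A}=\sum_{B}e^{\sup_B S_{F_n}f}$. It is exactly this Gibbs-type weighting that produces the identity $\omega H_{\delta_n}(\mathcal{B}^{F_n})+(1-\omega)H_{\pi_{*}\delta_n}(\mathcal{A}^{F_n})+\omega\int S_{F_n}f\,d\delta_n=\log Z_{F_n}\ \ge\ \log P^{\omega}(\pi,f,F_n,\epsilon)$, which is the whole content of the step; with uniform measures no such identity holds and the exponent comes out wrong. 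Also, no Ornstein--Weiss quasi-tiling is needed afterwards: since $\mathcal{A}$ and $\mathcal{B}$ are clopen (hence boundary-free), the subadditivity estimate $H_{\delta_n}(\mathcal{B}^{F_n})\le \frac{1}{|H|}\sum_{g\in F_n}H_{\delta_n\circ g^{-1}}(\mathcal{B}^{H})+|\partial_{\widetilde H}(F_n)|\log\#\mathcal{B}$ from the local entropy theory of \cite{hyz11} passes the identity to the weak$^*$ limit directly.

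For the lower bound you propose ergodic decomposition plus Shannon--McMillan--Breiman and the pointwise ergodic theorem along tempered F\o lner sequences. That route is genuinely different from the paper's, which avoids SMB entirely: it applies the elementary convexity inequality $\sum_i(-p_i\log p_i+p_ix_i)\le\log\sum_ie^{x_i}$ twice (once inside each fibre, once across the base) to obtain $\omega H_{\mu}(\mathcal{U}^{F_n})+(1-\omega)H_{\pi_{*}\mu}(\mathcal{V}^{F_n})+\omega\int S_{F_n}f\,d\mu\le\log P^{\omega}(\pi,f,F_n,\epsilon/8)$ for an arbitrary invariant $\mu$, and then lets $\diam\mathcal{U},\diam\mathcal{V}\to0$ using Huang--Ye's local formula $h_\mu(G,X)=\lim_{\diam\mathcal{U}\to0}h_\mu(G,\mathcal{U})$. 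Your SMB sketch stops precisely where the work is: knowing that typical atoms of $\alpha^{F_n}$ and $\beta^{F_n}$ have the predicted exponential masses does not by itself lower-bound the two-level quantity $\sum_i\bigl(\sum_je^{\sup S_{F_n}f}\bigr)^{\omega}$ --- the passage from atom counts to this $\omega$-power of inner covering sums still requires a H\"older/Jensen step of exactly the kind above, and you do not supply it. I would recommend adopting the direct convexity argument; it is shorter, valid for non-ergodic $\mu$, and makes the ergodic decomposition and tempered-F\o lner machinery unnecessary.
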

   
 The rest of this paper is organized as follows. In section 2,  we  give the precise definition of weighted   amenable  topological pressure and prepare some ingredients for the proof of Theorem 1.1.   In  section 3, we prove  Theorem  1.1.  

\section{Preliminary}

\subsection{Some backgrounds of amenable group}

 Let $G$ be a countable discrete infinite  group and $\FF(G)$   denote  the set of all finite non-empty subsets of $G$.  A group $G$ is said to an \emph{amenable group} if there exists $\{F_n\}_{n\geq 1} \subset \FF(G)$  such that for any $g\in G$,
$$\lim_{n\to\infty}\frac{|gF_n\triangle F_n|}{|F_n|}=0,$$
where  $|E|$  is counting measure of $E$ and $gF_n\triangle F_n=(gF_n\backslash F_n)\cup (F_n\backslash gF_n)$.
Such sequence is called F\o lner sequence. For  instances,  all finite groups and Abelian
groups, all finitely generated groups of subexponential growth are amenable group.  A example  of nonamenable groups is the free group of rank 2.

 Let  $K,\Omega \in \FF(G)$.  The $K$-boundary of $\Omega$ is $$\partial_K(\Omega)=\{g\in G:Kg\cap \Omega\neq\emptyset~{\rm and}~Kg\cap (G\setminus \Omega)\neq\emptyset\}.$$ It is  readily to show for any F$\phi$lner sequence of $G$,  one has
$$\lim_{n\to\infty}\frac{|\partial_K(F_n)|}{|F_n|}=0$$
for any $K\in\FF(G)$.

A set function $h:\FF(G)\rightarrow \R$  is
\begin{enumerate}
	\item  monotone if $h(E)\le h(F)$ for  $\forall E,F\in\FF(G)$ with $E\subset F$;\\
    \item  $G$-invariant if $h(Eg)= h(E)$  for $\forall g\in G$, $\forall E\in\FF(G)$; \\
	\item sub-additive if $h(E\cup F)\le h(E)+h(F)$   for  $E,F\in\FF(G)$ with $E\cap F=\emptyset$.
\end{enumerate}
\begin{flushleft}

\end{flushleft}
 The following well-known \emph{Ornstein-Weiss theorem}  plays a vital role in the definitions of some entropy-like quantities.
\begin{lem}\cite{ow87,gromov,lw00}\label{lem 2.1}
	 Let $G$ be a countable amenable group. Let  $h:\FF(G)\rightarrow \R$  be a monotone $G$-invariant sub-additive set function. Then there exists $\lambda\in [-\infty,\infty)$(only depending on $G$ and $h$) so that
	$$\lim_{n\to\infty}\frac{h(F_n)}{|F_n|}=\lambda$$
	 for all F\o lner sequences $\{F_n\}_{n\ge1}$  of $G$.
\end{lem}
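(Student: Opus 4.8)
The plan is to identify the limit explicitly as $\lambda=\inf_{F\in\FF(G)}\frac{h(F)}{|F|}$ and to prove the two matching inequalities $\liminf_{n}\frac{h(F_n)}{|F_n|}\ge\lambda$ and $\limsup_{n}\frac{h(F_n)}{|F_n|}\le\lambda$ for an \emph{arbitrary} F\o lner sequence; since $\lambda$ makes no reference to the sequence, convergence together with independence of the sequence then follow at once. First I would record that $h$ is bounded above per unit volume: writing any $F=\{g_1,\dots,g_m\}$ as a disjoint union of singletons and using sub-additivity together with $G$-invariance gives $h(F)\le\sum_{i}h(\{g_i\})=|F|\,h(\{e\})$, so $\frac{h(F)}{|F|}\le c:=h(\{e\})<\infty$ for every $F$. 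Hence $\lambda\in[-\infty,c]$, which already explains why the value may legitimately be $-\infty$ but never $+\infty$. Monotonicity also yields the crude lower bound $h(F)\ge h(\{e\})=c$ for every nonempty $F$, which I will use to control error terms.

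The inequality $\liminf_{n}\frac{h(F_n)}{|F_n|}\ge\lambda$ is immediate, since every term of the sequence is bounded below by the infimum. The whole content is therefore the reverse inequality, and the engine for it is the Ornstein--Weiss quasi-tiling theorem. Fix $\epsilon>0$. The quasi-tiling theorem provides, for a suitable $N=N(\epsilon)$, finitely many nested ``tile shapes'' $A_1\subset\cdots\subset A_N$ in $\FF(G)$, chosen so that each $A_{j+1}$ is sufficiently invariant relative to $A_j$, with the property that every $F$ that is sufficiently invariant relative to $A_N$ admits center sets $C_1,\dots,C_N\subset G$ for which the translates $\{A_jc:c\in C_j,\ 1\le j\le N\}$ are pairwise disjoint (after shrinking $\epsilon$-disjoint tiles using monotonicity) and cover at least $(1-\epsilon)|F|$. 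The crucial extra requirement, addressed in the last paragraph, is that the shapes can be taken \emph{near-optimal}, i.e. $\frac{h(A_j)}{|A_j|}<\lambda+\epsilon$ for every $j$.

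Granting such a near-optimal quasi-tiling of $F=F_n$ (valid for all large $n$, since a F\o lner sequence is eventually as invariant as one likes relative to the fixed set $A_N$), I would estimate $h(F_n)$ by sub-additivity. Splitting $F_n$ into the disjoint tiles and the uncovered remainder $U$ with $|U|\le\epsilon|F_n|$, sub-additivity and $G$-invariance give
$$h(F_n)\le\sum_{j=1}^{N}\sum_{c\in C_j}h(A_jc)+h(U)=\sum_{j=1}^{N}|C_j|\,h(A_j)+h(U).$$
Using $\frac{h(A_j)}{|A_j|}<\lambda+\epsilon$, the covering estimate $(1-\epsilon)|F_n|\le\sum_{j}|C_j||A_j|\le|F_n|$, and $h(U)\le|U|\,c\le\epsilon|F_n|\max(c,0)$, one obtains $\frac{h(F_n)}{|F_n|}\le(\lambda+\epsilon)(1-\epsilon)+\epsilon\max(c,0)$ when $\lambda+\epsilon\le0$ and the analogous bound with leading factor $1$ otherwise. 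Letting $n\to\infty$ and then $\epsilon\to0$ forces $\limsup_n\frac{h(F_n)}{|F_n|}\le\lambda$, which combined with the previous paragraph finishes the proof.

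The main obstacle is arranging the tile shapes to be near-optimal \emph{and} sufficiently invariant simultaneously, since these two demands pull in different directions and the quasi-tiling theorem by itself controls only invariance. I would resolve this by building the tower $A_1\subset\cdots\subset A_N$ inductively: take $A_1$ with $\frac{h(A_1)}{|A_1|}<\lambda+\epsilon$ from the definition of the infimum, and at each step select $A_{j+1}$ to be both sufficiently invariant relative to $A_j$ and near-optimal. The existence of an arbitrarily invariant near-optimal set is the genuinely amenable-group input: starting from a fixed $A$ with near-optimal average, right-translated so that $e\in A$ (which does not change $\frac{h(A)}{|A|}$ by $G$-invariance), one packs disjoint translates of $A$ inside a highly invariant F\o lner set, whence sub-additivity keeps the average at most $\frac{h(A)}{|A|}$ while the F\o lner property transfers the required invariance. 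This is exactly where the quasi-tiling machinery of \cite{ow87,gromov,lw00} is invoked. Finally, the case $\lambda=-\infty$ is covered by the same scheme: for every $M$ one chooses the base set $A$ with $\frac{h(A)}{|A|}<-M$, and the estimate above then yields $\limsup_n\frac{h(F_n)}{|F_n|}\le-M$ for every $M$, so the limit is $-\infty$.
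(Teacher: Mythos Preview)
The paper does not give its own proof of this lemma; it is simply quoted from \cite{ow87,gromov,lw00} as a known result, so there is nothing in the paper to compare against beyond those references. Your outline follows the standard quasi-tiling argument found there and is sound in its main estimate.

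The one genuine soft spot is your final paragraph. The object you build by packing disjoint right-translates of a near-optimal $A$ inside a highly invariant F\o lner set $F$ does satisfy $h(\,\cdot\,)/|\cdot|\le h(A)/|A|$ by sub-additivity and $G$-invariance, but it does \emph{not} automatically inherit the invariance of $F$: a sparse subset of an invariant set need not itself be invariant. Conversely, taking $A_{j+1}=F$ gives the invariance but not the near-optimality unless the single-shape packing already achieves $(1-\epsilon)$-coverage, which is precisely what fails in general amenable groups and is the reason multiple tile shapes are needed. So as written you are claiming two properties for two different sets.

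The cleanest repair, and the route actually taken in \cite{lw00}, is to drop the explicit identification $\lambda=\inf_{F}h(F)/|F|$ at the outset and instead prove directly that $\limsup_{n}h(F_n)/|F_n|\le\liminf_{m}h(E_m)/|E_m|$ for any two F\o lner sequences. One selects the tower $A_1\subset\cdots\subset A_N$ from a subsequence of $\{E_m\}$ along which the averages are within $\epsilon$ of the $\liminf$; that subsequence is still F\o lner, so the quasi-tiling theorem applies to it verbatim and near-optimality of the shapes comes for free. Your main estimate then finishes the job, yielding existence and F\o lner-independence of the limit in one stroke. The formula $\lambda=\inf_{F}h(F)/|F|$, if wanted, can be recovered afterward.
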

\subsection{Weighted topological pressure}

 By a pair $(X,G)$ we mean a \emph{$G$-system}, where $X$ is a  compact  metric space  and $\Gamma: G\times X \rightarrow X$ is a continuous mapping satisfying: 
 \begin{enumerate}
 \item   $\Gamma(e_G,x)=x$ for every $x\in X$; 
 \item  $\Gamma(g_1,\Gamma(g_2,x)=\Gamma(g_1g_2,x)$ for every $g_1,g_2\in G$ and $x\in X$.
 \end{enumerate}

 We sometimes write $gx:=\Gamma(g,x)$  when $g$ is fixed.
From now on, the group $G$ is  always assumed  to be a  countable discrete amenable group.   Let $(X,G)$ and $(Y,G)$ be two $G$-systems.  $X$ is said to be a \emph{extension} of $Y$(equivalently, $Y$ is a \emph{factor} of $X$) if there exists  a  continuous surjective mapping $\pi:X \rightarrow Y$ such that $g\pi(x)=\pi(gx)$ for  all $g\in G$ and $x\in X$.

Let $C(X,\mathbb{R})$ denote the Banach space consisting of all continuous functions on $X$ endowed with supremum norm. Given  $E\in \FF(G)$, we set $S_{E}f(x)=\sum_{g\in E}f(gx)$. Let $(X,d), (Y,d^{'})$ be two compact metric spaces and  $\pi:(X,G)\rightarrow (Y,G)$ be the factor map between two $G$-systems. Put  $$d_{E}(x_1,x_2)=\max_{g\in E}d(gx_1,gx_2),~~d^{'}_{E}(y_1,y_2)=\max_{g\in E}d^{'}(gy_1,gy_2)$$ for  any $x_1,x_2\in X$ and $y_1,y_2\in Y$.

\begin{df}
Let $\pi:(X,G)\rightarrow (Y,G)$ be a factor map between two $G$-systems and $f\in C(X, \mathbb{R})$. For $\epsilon>0$, $\omega \in [0,1]$, $F \in \FF(G)$, $\emptyset \not=  \Omega \subset X$,   we define
\begin{align*}
&P(\Omega,f,F,\epsilon)\\
=&\inf\left\{\sum_{i=1}^{k}e^{\sup_{U_i}
	S_{F}f}: \begin{array}{l} \text{there exist open subsets}~U_1,\cdots, U_k\text{of }X
	\text{with}\\
	\Omega\subset U_1\cup\cdots\cup U_k \text{ and }\diam(U_i,d_{F})<\epsilon \text{ for}\\
	\text{all} ~ 1\leq i \leq k
\end{array}\right\}.
\end{align*}
and 
\begin{align*}
&P^{\omega}(\pi,f,F,\epsilon)\\
=&\inf\left\{\sum_{i=1}^k
(P(\pi^{-1}(V_i),f,F,\epsilon))^{\omega}: \begin{array}{l}
	Y=V_1\cup\cdots\cup V_k \text{ is  an open }\\\text{  cover with}~\diam(V_i,d_{F}^{'})<\epsilon\\
	\text{ for all }~ 1\leq i \leq k
\end{array}\right\}.
\end{align*}

It's readily to check that  $ F\in\FF(G)\mapsto \log P^{\omega}(\pi,f,F,\epsilon)$  is  sub-additive. By Ornstein-Weiss theorem (see Lemma \ref{lem 2.1}),   the limit
$$\lim_{n\to\infty}\frac{\log P^{\omega}(\pi,f,F_n,\epsilon)}{|F_n|}$$
exists and is independent of  the choice of  F\o lner sequences $\{F_n\}_{n\ge1}$  of $G$.

We define \emph{ $\omega$-weighted amenable topological pressure of $f$} as
$$P^{\omega}(\pi,f,G)=\lim_{\epsilon\to0}\lim_{n\to\infty}\frac{\log P^{\omega}(\pi,f,F_n,\epsilon)}{|F_n|}.$$
\end{df}

When $f=0$, we call $h_{top}^{\omega}(\pi,G):=P^{\omega}(\pi,0,G)
$ the\emph{ $\omega$-weighted amenable topological entropy }of $X$. Since  integral group $\mathbb{Z}$ is amenable group, so our definition is exactly  a   generalization of   Tsukamoto's weighted topological pressure\cite{t23}. Moreover, if  $Y$ is a singleton and $\omega =1$,   $P^{\omega}(\pi,f,G)$ is reduced to the  amenable topological pressure (or entropy):
\begin{align*}
P(f,G)&=\lim_{\epsilon\to0}\lim_{n\to\infty}\frac{\log P(X,f,F_n,\epsilon)}{|F_n|},\\
h_{top}(X,G)&=\lim_{\epsilon\to0}\lim_{n\to\infty}\frac{\log\#(X,F_n, \epsilon)}{|F_n|},
\end{align*}
where $\#(X,F, \epsilon)=P(X,0,F,\epsilon)$.

\subsection{Measure-theoretical entropy of amenable group}  To prove the lower bound of Theorem \ref{thm 1.1}, in this subsection, we collect some ingredients concerning measure-theoretic entropy of amenable group defined by open covers and partitions. 

Denote by $M(X)$, $M(X,G)$ and $E(X,G)$ the sets of all Borel probability measures on $X$ endowed with the weak*-topology,  all $G$-invariant Borel probability measures on $X$ and  all $G$-invariant ergodic Borel probability measures on $X$, respectively. The amenability of $G$ ensures
that  $E(X,G)\not=\emptyset$  and $M(X,G)$ is a compact convex subset of $M(X)$.

 Let $\PP_X$ denote the set of all finite Borel  partitions on $X$. For $\alpha,\beta\in \PP_X$, the join of $\alpha$ and $\beta$ is  $\alpha\vee\beta=\{A\cap B:A\in\alpha,~B\in\beta\}.$ Given $F\in\FF(G)$, we set  $\alpha^{F}=\vee_{g\in F}g^{-1}\alpha$,  where $g^{-1}\alpha=\{g^{-1}A:A\in\alpha\}$. One says that $\alpha$ refines $\beta$, denoted by  $\alpha \succ \beta$,   if for every  $A\in\alpha$ there exists $B\in \beta$ such that $A\subset B$. Given  $\mu\in M(X,G)$,  recall that the \emph{partition entropy of $\alpha$}, and the  \emph{condition entropy of $\alpha$  w.r.t $\beta$}  are respectively given by $$H_{\mu}(\alpha)=-\sum_{A\in\alpha}\mu(A)\log\mu(A).$$
$$H_{\mu}(\alpha|\beta)=-\sum_{B\in\beta}\mu(B)\sum_{A\in\alpha}
\mu_B(A)\log\mu_B(A),$$
where $\mu_B(\cdot)=\frac{\mu(\cdot\cap B)}{\mu(B)}$ is the conditional measure defined  on $B$, and we use the convention that $0\cdot \log 0=0$.

The \emph{measure-theoretic $\mu$-entropy of $\alpha$} is defined by 
$$h_{\mu}(G,\alpha)=\lim_{n\to\infty}\frac{1}{|F_n|}
H_{\mu}(\alpha^{F_n}),$$
where the  limit exists since  $H_{\mu}(\alpha^{F})$ is sub-additive in $F$. The \emph{measure-theoretic $\mu$-entropy of $(X,G)$} is given by 
\begin{align*}
	h_{\mu}(G, X)=\sup_{\alpha\in\PP_X}h_{\mu}(G,\alpha).
\end{align*}

Next, we  recall the equivalent definition of measure-theoretic entropy  defined by open covers. Let $\mathcal{C}_X^{o}$  denote  the set of all finite open covers of $X$. The \emph{Lebesgue number} of $\UU$, denoted by $\Leb(\mathcal{U})$,  is  the largest  positive number $\delta >0$ such that  every open ball of radius $\delta$ is contained in  some element of $\UU$.  Given $\mu\in M(X,G)$ and $\UU\in \mathcal{C}_X^{o}$,  set
$$H_{\mu}(\UU)=\inf_{\alpha\in\PP_X,\alpha\succeq\UU}H_{\mu}(\alpha).$$
Since   $H_{\mu}(\UU^{F})$ is sub-additive in $F$, then we define the \emph{measure-theoretic $\mu$-entropy of $\UU$} as
$$h_{\mu}(G,\UU)=\lim_{n\to\infty}\frac{1}{|F_n|}H_{\mu}(\UU^{F_n}).$$
Huang and Ye \cite[Theorem 3.5]{hyz11} proved that  for  all $\mu \in M(X,G)$
$$h_{\mu}(G, X)=\sup_{\UU\in\mathcal{C}_X^o}h_{\mu}(G,\UU).$$
Using above fact,   one can easily get the following:
\begin{lem}\label{lem 2.3}
Let $(X,G)$ be a $G$-system. Then for every $\mu \in M(X,G)$,
$$h_{\mu}(G,X)=\lim\limits_{\diam \UU \to 0} h_{\mu}(G,\UU).$$
\end{lem}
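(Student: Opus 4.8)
The plan is to derive Lemma \ref{lem 2.3} directly from the Huang--Ye identity $h_{\mu}(G,X)=\sup_{\UU\in\mathcal{C}_X^{o}}h_{\mu}(G,\UU)$ recalled just above, combined with the monotonicity of $\UU\mapsto h_{\mu}(G,\UU)$ under refinement and a Lebesgue-number argument.

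First I would record the monotonicity statement: if $\UU,\VV\in\mathcal{C}_X^{o}$ with $\VV\succeq\UU$, then $H_{\mu}(\UU)\le H_{\mu}(\VV)$, since any finite Borel partition refining $\VV$ also refines $\UU$, so the infimum defining $H_{\mu}(\UU)=\inf_{\alpha\in\PP_X,\,\alpha\succeq\UU}H_{\mu}(\alpha)$ runs over a larger family of partitions than the one defining $H_{\mu}(\VV)$. Because $\VV^{F}\succeq\UU^{F}$ for every $F\in\FF(G)$, dividing by $|F_n|$ and passing to the limit along a F\o lner sequence gives $h_{\mu}(G,\UU)\le h_{\mu}(G,\VV)$.

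Next comes the Lebesgue-number step, the only place the metric is used. Fix $\UU\in\mathcal{C}_X^{o}$ and set $\delta=\Leb(\UU)>0$ (positive because $X$ is compact and $\UU$ is finite). If $\VV\in\mathcal{C}_X^{o}$ satisfies $\diam\VV<\delta$, then for any $V\in\VV$ and any $x\in V$ one has $V\subset B(x,\delta)$, and $B(x,\delta)$ lies inside some member of $\UU$; hence $\VV\succeq\UU$, and the previous step yields $h_{\mu}(G,\VV)\ge h_{\mu}(G,\UU)$. Therefore $\liminf_{\diam\VV\to0}h_{\mu}(G,\VV)\ge h_{\mu}(G,\UU)$, and taking the supremum over $\UU\in\mathcal{C}_X^{o}$ gives $\liminf_{\diam\VV\to0}h_{\mu}(G,\VV)\ge h_{\mu}(G,X)$ by the Huang--Ye identity.

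For the matching upper bound I would simply note that $h_{\mu}(G,\VV)\le\sup_{\UU\in\mathcal{C}_X^{o}}h_{\mu}(G,\UU)=h_{\mu}(G,X)$ for every $\VV\in\mathcal{C}_X^{o}$, so $\limsup_{\diam\VV\to0}h_{\mu}(G,\VV)\le h_{\mu}(G,X)$. Combining the two estimates shows that the limit exists and equals $h_{\mu}(G,X)$, with the obvious reading when this common value is $+\infty$. I do not expect any genuine obstacle; the one point to phrase carefully is the meaning of $\lim_{\diam\UU\to0}$ — namely, that for every $\epsilon>0$ there is $\delta>0$ with $|h_{\mu}(G,\UU)-h_{\mu}(G,X)|<\epsilon$ (resp. $h_{\mu}(G,\UU)>1/\epsilon$ in the infinite case) whenever $\diam\UU<\delta$ — which is precisely what the $\liminf$/$\limsup$ bounds above deliver.
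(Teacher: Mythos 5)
Your proof is correct and is exactly the argument the paper has in mind: the paper omits the proof, asserting the lemma follows "easily" from the Huang--Ye identity $h_{\mu}(G,X)=\sup_{\UU\in\mathcal{C}_X^{o}}h_{\mu}(G,\UU)$, and your monotonicity-plus-Lebesgue-number argument is the standard way to fill in that omission. No issues.
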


\subsection{Zero-dimensional extensions of amenable group actions}
In this subsection, we recall the theory of principle extension of amenable group \cite{d11,dh13,huc21}, which is used  to prove the upper bound of Theorem \ref{thm 1.1}.

Let  $\pi:(X,G)\rightarrow (Y,G)$
be a factor map and $\mu \in M(X,G)$. For $\alpha, \beta \in \PP_X$,   we define
$$h_{\mu}(G,\alpha|\beta)=\lim\limits_{n\to \infty}\frac{1}{|F_n|}H_{\mu}(\alpha^{F_n}|\beta^{F_n}).$$
Every $\beta \in \PP_Y$ can be lifted to a partition  of $X$ via $\pi$.  Put 
\begin{align*}
	h_{\mu}(G, \alpha|Y)=\inf_{\beta \in \PP_Y} h(\mu,\alpha|\pi^{-1}(\beta)).
\end{align*}
The \emph{conditional measure-theoretic entropy  of $\mu$ w.r.t. $\pi$} is defined by 
$$h_{\mu}(G, X|Y)=\sup_{\alpha \in \PP_X} 	h_{\mu}(G, \alpha|Y).	$$

\begin{df} \label{def 2.4}\cite[Definition 2.8]{huc21} The system $(X,G)$ is a principle extension of $(Y,G)$ via the factor map $\pi$ if  $h_{\mu}(G, X|Y)=0$ for every $\mu \in M(X,G)$.
\end{df}

As a direct  consequence of  Definition  \ref{def 2.4}, the measure-theoretic entropy is preserved under principle extension. 
\begin{thm}\label{thm 2.5}
Let  $\pi:(X,G)\rightarrow (Y,G)$ be  a principle  factor extension of $(Y,G)$. Then 
$$h_{\mu}(G,X)=h_{\pi_{*}{\mu}}(G,Y)$$
 for  every $\mu \in M(X,G)$.
\end{thm}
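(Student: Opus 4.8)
The plan is to deduce the statement from an Abramov--Rokhlin type \emph{addition inequality}
$$h_{\mu}(G,X)\le h_{\mu}(G,X|Y)+h_{\pi_{*}\mu}(G,Y),$$
together with the complementary trivial bound $h_{\pi_{*}\mu}(G,Y)\le h_{\mu}(G,X)$; once these are in hand, the definition of a principle extension (Definition \ref{def 2.4}, i.e. $h_{\mu}(G,X|Y)=0$ for every $\mu\in M(X,G)$) collapses the chain of inequalities to the desired equality. I would organise the argument around the elementary conditional-entropy identity for finite partitions rather than trying to manipulate the global quantities directly, since the key cancellation happens at the level of a fixed pair of partitions.

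First I would record the easy inequality. For every $\beta\in\PP_Y$ one has $H_{\pi_{*}\mu}(\beta^{F_{n}})=H_{\mu}((\pi^{-1}\beta)^{F_{n}})$, because $\pi$ is equivariant (so $(\pi^{-1}\beta)^{F_n}=\pi^{-1}(\beta^{F_n})$) and pushes $\mu$ forward to $\pi_{*}\mu$; dividing by $|F_{n}|$ and letting $n\to\infty$ gives $h_{\pi_{*}\mu}(G,\beta)=h_{\mu}(G,\pi^{-1}\beta)$. Since $\pi^{-1}\beta\in\PP_{X}$, taking the supremum over $\beta\in\PP_Y$ yields $h_{\pi_{*}\mu}(G,Y)\le\sup_{\gamma\in\PP_{X}}h_{\mu}(G,\gamma)=h_{\mu}(G,X)$.

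For the addition inequality I would start from the partition identity $H_{\mu}(\alpha^{F_{n}}\vee(\pi^{-1}\beta)^{F_{n}})=H_{\mu}(\alpha^{F_{n}}|(\pi^{-1}\beta)^{F_{n}})+H_{\mu}((\pi^{-1}\beta)^{F_{n}})$, valid for fixed $\alpha\in\PP_{X}$, $\beta\in\PP_{Y}$ and each $n$. Each of the three terms, after dividing by $|F_{n}|$, converges by the Ornstein--Weiss theorem (Lemma \ref{lem 2.1}) applied to the relevant monotone, $G$-invariant, sub-additive set functions, so in the limit
$$h_{\mu}(G,\alpha\vee\pi^{-1}\beta)=h_{\mu}(G,\alpha|\pi^{-1}\beta)+h_{\pi_{*}\mu}(G,\beta).$$
Using monotonicity $h_{\mu}(G,\alpha)\le h_{\mu}(G,\alpha\vee\pi^{-1}\beta)$ and the crude bound $h_{\pi_{*}\mu}(G,\beta)\le h_{\pi_{*}\mu}(G,Y)$, I obtain $h_{\mu}(G,\alpha)\le h_{\mu}(G,\alpha|\pi^{-1}\beta)+h_{\pi_{*}\mu}(G,Y)$ for every $\beta$. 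Since the left-hand side does not depend on $\beta$, I may take the infimum over $\beta\in\PP_{Y}$ on the right to get $h_{\mu}(G,\alpha)\le h_{\mu}(G,\alpha|Y)+h_{\pi_{*}\mu}(G,Y)$, and then the supremum over $\alpha\in\PP_{X}$ produces $h_{\mu}(G,X)\le h_{\mu}(G,X|Y)+h_{\pi_{*}\mu}(G,Y)$.

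Finally, invoking Definition \ref{def 2.4}, the conditional term vanishes, so $h_{\mu}(G,X)\le h_{\pi_{*}\mu}(G,Y)$, which combined with the easy inequality gives the equality. The step I expect to be the most delicate is the passage from the fixed-partition identity to the global inequality: one must respect the order of the quantifiers, since the infimum over $Y$-partitions is taken \emph{before} the supremum over $X$-partitions, exactly matching the definition of $h_{\mu}(G,X|Y)$, and one must ensure all the relevant limits genuinely exist so that the per-level identity survives the limit intact. The main routine check underpinning this is the sub-additivity of $F\mapsto H_{\mu}(\alpha^{F}|(\pi^{-1}\beta)^{F})$ needed to apply Lemma \ref{lem 2.1}.
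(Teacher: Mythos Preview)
Your argument is correct and follows the same underlying idea as the paper: both deduce the equality from the Abramov--Rokhlin relation linking $h_\mu(G,X)$, $h_{\pi_*\mu}(G,Y)$ and $h_\mu(G,X|Y)$, together with the principal-extension hypothesis. The paper's proof is extremely terse---it simply asserts the full equality $h_\mu(G,X|Y)=h_\mu(G,X)-h_{\pi_*\mu}(G,Y)$ when $h_{\pi_*\mu}(G,Y)<\infty$ and treats the infinite case separately---whereas you supply a self-contained derivation and, more economically, observe that only the one-sided inequality $h_\mu(G,X)\le h_\mu(G,X|Y)+h_{\pi_*\mu}(G,Y)$ is needed, which also makes the case split on finiteness unnecessary. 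One small technical correction: the map $F\mapsto H_\mu(\alpha^{F}\mid(\pi^{-1}\beta)^{F})$ is $G$-invariant and sub-additive but not in general monotone, so Lemma~\ref{lem 2.1} as stated does not apply to it directly; however its limit still exists, since it is the difference of $\frac{1}{|F_n|}H_\mu\big((\alpha\vee\pi^{-1}\beta)^{F_n}\big)$ and $\frac{1}{|F_n|}H_\mu\big((\pi^{-1}\beta)^{F_n}\big)$, each of which does satisfy the hypotheses of Lemma~\ref{lem 2.1}.
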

\begin{proof}
It is clear that if $h_{\pi_{*}{\mu}}(G,Y)=\infty$. If $h_{\pi_{*}{\mu}}(G,Y)<\infty$, one can show that 
$	h_{\mu}(G, X|Y)=h_{\mu}(G,X)-h_{\pi_{*}{\mu}}(G,Y)$. This implies that desired result.
\end{proof}

As  mentioned above,  $h_{\mu}(G, X|Y)=h_{\mu}(G,X)-h_{\pi_{*}{\mu}}(G,Y)$ if  $h_{\pi_{*}{\mu}}(G,Y)<\infty$.  This allows us to  extend  \cite[Corollary 6.8.9]{d11} to the following:
\begin{thm}\label{thm 2.6}
Let   $(Y,G)$  be a $G$-system  with finite topological entropy. Then   $(X,G)$ is a principle extension of $(Y,G)$ via the factor map $\pi$ if and only if  $h_{\mu}(G,X)=h_{\pi_{*}{\mu}}(G,Y)$ for  every $\mu \in M(X,G)$. 
\end{thm}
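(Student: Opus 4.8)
The plan is to read the statement off from Definition~\ref{def 2.4} together with the relative entropy identity
$$h_\mu(G,X|Y)=h_\mu(G,X)-h_{\pi_*\mu}(G,Y),$$
which was recalled above and holds for any $\mu\in M(X,G)$ with $h_{\pi_*\mu}(G,Y)<\infty$. The sole purpose of the hypothesis $h_{top}(Y,G)<\infty$ is to guarantee that this identity is available for \emph{every} invariant measure at once: since $\pi_*\mu\in M(Y,G)$ for each $\mu\in M(X,G)$, the variational principle for amenable topological entropy gives $h_{\pi_*\mu}(G,Y)\le h_{top}(Y,G)<\infty$ for all $\mu\in M(X,G)$, and hence the displayed identity holds for all $\mu\in M(X,G)$.

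Granting this, the forward implication---$(X,G)$ a principle extension $\Longrightarrow$ $h_\mu(G,X)=h_{\pi_*\mu}(G,Y)$ for every $\mu$---is already Theorem~\ref{thm 2.5} and needs no finiteness assumption. For the converse I would argue as follows: assuming $h_\mu(G,X)=h_{\pi_*\mu}(G,Y)$ for every $\mu\in M(X,G)$, the common value is $\le h_{top}(Y,G)<\infty$, so the relative entropy identity applies and yields $h_\mu(G,X|Y)=h_\mu(G,X)-h_{\pi_*\mu}(G,Y)=0$ for every $\mu\in M(X,G)$; by Definition~\ref{def 2.4} this says precisely that $(X,G)$ is a principle extension of $(Y,G)$ via $\pi$.

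The only substantive ingredient is the relative entropy (Abramov--Rokhlin type) formula for amenable group actions. If one does not wish merely to cite it, I would obtain it by writing, for $\alpha\in\PP_X$ and $\beta\in\PP_Y$,
$$H_\mu\big(\alpha^{F_n}\vee\pi^{-1}(\beta)^{F_n}\big)=H_\mu\big(\alpha^{F_n}\mid\pi^{-1}(\beta)^{F_n}\big)+H_{\pi_*\mu}\big(\beta^{F_n}\big),$$
dividing by $|F_n|$, letting $n\to\infty$ along a F\o lner sequence (using subadditivity and the Ornstein--Weiss theorem, Lemma~\ref{lem 2.1}, together with $\pi^{-1}(\beta)^{F_n}=\pi^{-1}(\beta^{F_n})$ by equivariance of $\pi$), and finally taking the supremum over $\alpha\in\PP_X$ and the infimum over $\beta\in\PP_Y$; the finiteness of $h_{\pi_*\mu}(G,Y)$ is exactly what legitimizes splitting off the $\inf_\beta$ term from the $\sup_\alpha$. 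I do not expect any real obstacle beyond invoking this formula. The one point worth flagging is that the hypothesis $h_{top}(Y,G)<\infty$ is genuinely needed for the ``if'' direction: without it the equality $h_\mu(G,X)=h_{\pi_*\mu}(G,Y)=\infty$ is compatible with $h_\mu(G,X|Y)>0$, so the converse can fail.
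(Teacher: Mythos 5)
Your proposal is correct and follows essentially the same route as the paper: the forward direction is Theorem \ref{thm 2.5}, and the converse uses the identity $h_{\mu}(G,X|Y)=h_{\mu}(G,X)-h_{\pi_{*}\mu}(G,Y)$ (valid once $h_{\pi_{*}\mu}(G,Y)\le h_{top}(Y,G)<\infty$, via the variational principle) together with Definition \ref{def 2.4}. Your additional remarks on where the finiteness hypothesis enters and how one would verify the Abramov--Rokhlin type formula go slightly beyond what the paper records, which simply asserts that identity.
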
 

To verify that  $\pi$ is principle factor, sometimes  it  is  more convenient to   show  the conditional entropy $h_{\text{top}}(G,X|Y)=0$  rather than checking the  condition  $h_{\mu}(G,X)=h_{\pi_{*}{\mu}}(G,Y)$.  

Recall that the \emph{topological conditional entropy} of $\pi$ \cite{y15} is given by 
$$h_{\text{top}}(G,X|Y)=\lim_{\epsilon\to0}\lim_{n\to\infty}
\frac{\sup_{y\in Y}\log\#(\pi^{-1}(y),F_n,\epsilon)}{|F_n|},$$
where   inner limit exists since  $\sup_{y\in Y}\log\#(\pi^{-1}(y),F,\epsilon)$ is  sub-additive in $F$.

\begin{thm}\label{thm 2.7}
	Let   $(Y,G)$  be a $G$-system  with finite topological entropy. Then   $(X,G)$ is a principle extension of $(Y,G)$ via the factor map $\pi$ if and only if  $h_{\text{top}}(G,X|Y)=0$. 
\end{thm}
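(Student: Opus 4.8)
The plan is to prove the two implications separately, in both cases exploiting the formula $h_{\mu}(G,X|Y)=h_{\mu}(G,X)-h_{\pi_{*}\mu}(G,Y)$ (valid because $(Y,G)$ has finite topological entropy, hence $h_{\pi_{*}\mu}(G,Y)<\infty$ for every $\mu$) together with Theorem \ref{thm 2.6}, which already reduces ``$\pi$ is a principle extension'' to the measure-theoretic identity $h_{\mu}(G,X)=h_{\pi_{*}\mu}(G,Y)$ for all $\mu\in M(X,G)$. Thus it suffices to show that $h_{\text{top}}(G,X|Y)=0$ is equivalent to $\sup_{\mu\in M(X,G)}h_{\mu}(G,X|Y)=0$, i.e. to a conditional variational principle relating the topological conditional entropy $h_{\text{top}}(G,X|Y)$ to the fiberwise measure-theoretic conditional entropies.

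For the easy direction, assume $h_{\text{top}}(G,X|Y)=0$. First I would fix $\mu\in M(X,G)$ and a finite Borel partition $\alpha\in\PP_X$, pick a finite open cover $\UU$ refining $\alpha$ with small diameter, and estimate $H_{\mu}(\alpha^{F_n}\mid \pi^{-1}(\beta)^{F_n})$ for a suitable partition $\beta\in\PP_Y$ by the logarithm of the number of $(F_n,\epsilon)$-distinguishable points in a typical fiber $\pi^{-1}(y)$. Using disintegration $\mu=\int_Y \mu_y\,d\pi_{*}\mu(y)$ and the concavity of entropy, one bounds $H_{\mu}(\alpha^{F_n}\mid\pi^{-1}(\beta)^{F_n})$ by roughly $\sup_{y}\log\#(\pi^{-1}(y),F_n,\epsilon)$ up to a term that is $o(|F_n|)$ coming from the $K$-boundary of $F_n$ relative to the fibering partition $\beta$; dividing by $|F_n|$, letting $n\to\infty$ and then $\epsilon\to 0$ forces $h_{\mu}(G,\alpha|Y)=0$, and taking the supremum over $\alpha$ gives $h_{\mu}(G,X|Y)=0$. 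Then Theorem \ref{thm 2.6} closes this direction.

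For the converse, assume $(X,G)$ is a principle extension, equivalently $h_{\mu}(G,X|Y)=0$ for all $\mu$, and suppose for contradiction that $h_{\text{top}}(G,X|Y)=c>0$. The idea is the familiar amenable-group version of Misiurewicz's argument: for each $n$ choose $y_n\in Y$ and an $(F_n,\epsilon)$-separated set $E_n\subset\pi^{-1}(y_n)$ with $|E_n|\geq e^{(c-o(1))|F_n|}$, form the empirical measures $\sigma_n=\frac{1}{|F_n|}\sum_{g\in F_n}g_{*}\!\left(\frac{1}{|E_n|}\sum_{x\in E_n}\delta_x\right)$, pass to a weak* limit $\mu\in M(X,G)$ along a subsequence, and then run the standard entropy-lower-bound estimate \`a la \cite{mis75} relative to the base: one shows $h_{\mu}(G,\alpha|Y)\geq c-\text{(small)}$ for a partition $\alpha$ whose boundary is $\mu$-null and whose pieces have $d$-diameter below $\epsilon$, using that all the mass of the approximating measures sits in a single fiber over a point close to $y_n$, so the conditional entropy over $Y$ does not decrease the count. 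This produces $\mu$ with $h_{\mu}(G,X|Y)>0$, contradicting the principle-extension hypothesis.

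The main obstacle is this last step: the Misiurewicz-type passage to the limit must be carried out \emph{conditionally over $Y$}, so one needs to control the conditional partition entropies $H_{\mu}(\alpha^{F_n}\mid\pi^{-1}(\beta)^{F_n})$ along F\o lner sequences when the approximating separated sets are concentrated on varying fibers. The delicate points are (i) choosing the base partition $\beta\in\PP_Y$ so that its atoms have $\pi_{*}\mu$-null boundary and simultaneously separate the relevant base points, and (ii) handling the quasi-tiling / $o(|F_n|)$ boundary error terms uniformly, which is where the amenable machinery (Ornstein--Weiss, $K$-boundaries being small) enters in place of the elementary $\mathbb{Z}$-counting. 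If an explicit conditional variational principle of the form $h_{\text{top}}(G,X|Y)=\sup_{\mu}h_{\mu}(G,X|Y)$ is already available in the literature for amenable actions (e.g. via \cite{y15}), the proof shortcuts to quoting it plus Theorem \ref{thm 2.6}; I would cite that if permissible, and otherwise supply the argument sketched above.
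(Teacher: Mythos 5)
Your proposal is correct and, in its ``shortcut'' form, is exactly the paper's proof: the authors reduce to Theorem \ref{thm 2.6} via the identity $h_{\mu}(G,X|Y)=h_{\mu}(G,X)-h_{\pi_{*}\mu}(G,Y)$ and then quote Yan's conditional variational principle \cite[Theorem 5.9, Corollary 5.15]{y15} together with the surjectivity $\pi_{*}(M(X,G))=M(Y,G)$ from the Appendix. The Misiurewicz-type argument you sketch as a fallback is not needed.
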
 

\begin{proof}
By Theorem \ref{thm 2.6}, it suffices to show 
$$h_{\text{top}}(G,X|Y)=\sup_{\mu \in M(X,G)}\{h_{\mu}(G,X)-h_{\pi_{*}{\mu}}(G,Y)\}.$$

By \cite[Theorem 5.9, Corollary 5.15]{y15}, one has 
$$h_{\text{top}}(G,X|Y)=\sup_{\nu\in M(Y,G)}\sup_{\mu \in M(X,G),\pi_{*}\mu=\nu}\{h_{\mu}(G,X)-h_{v}(G,Y)\}.$$

Using the fact $\pi_{*}(M(X,G))=M(Y,G)$, see [Appenix, Theorem A],  one can get the desired result.
\end{proof}

   A compact metrizable space is said to be \emph{ zero-dimensional} if clopen subsets form an open basis of the topology. The following   powerful tool developed by  Huczek  \cite[Theorem 3.2]{huc21} says that any $G$-system admits  a  zero-dimensional principle extension.

\begin{thm}\label{thm 2.8}
Let $(Y,G)$ be a $G$-system. Then there  exists   a zero-dimensional compact metric space $X$ such that  $(X,G)$ is a principle extension of $(Y,G)$.
\end{thm}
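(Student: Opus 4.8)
The plan is to present $(Y,G)$ as a subshift over an interval alphabet and then replace that alphabet by a Cantor alphabet in a dynamically harmless way. First I would fix a compatible metric $d\le1$ on $Y$ and a sequence $\phi_k\in C(Y,[0,1])$, $k\ge1$, separating the points of $Y$. Then the map
$$\Phi(y)=\bigl(\phi_k(gy)\bigr)_{g\in G,\,k\ge1}\in\bigl([0,1]^{\N}\bigr)^{G}$$
is an equivariant homeomorphism of $(Y,G)$ onto a closed $G$-invariant subset of the full shift, so there is no loss in assuming that $Y\subset\bigl([0,1]^{\N}\bigr)^{G}$ is a subshift carrying the shift action. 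Next I would take the standard continuous surjection $\rho\colon\{0,1\}^{\N}\to[0,1]$, $\rho\bigl((a_i)_{i}\bigr)=\sum_{i\ge1}a_i2^{-i}$, which is injective off the countable dense set $D\subset[0,1]$ of dyadic rationals and is exactly two-to-one over $D$, and form the pullback
$$X_0=\Bigl\{\,x=(x_{g,k})_{g\in G,\,k\ge1}\in\bigl(\{0,1\}^{\N}\bigr)^{G\times\N}\ :\ \bigl(\rho(x_{g,k})\bigr)_{g,k}\in Y\,\Bigr\}.$$
This $X_0$ is a closed subset of a Cantor set, hence a zero-dimensional compact metric space, it carries the restricted shift action, and applying $\rho$ coordinatewise defines a factor map $\pi_0\colon X_0\to Y$.

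The point requiring work is that $\pi_0$ is in general \emph{not} a principal extension: whenever $\phi_k(gy)\in D$ the fibre of $\pi_0$ acquires a free bit at the coordinate $(g,k)$ (the two $\rho$-preimages), and if $\phi_k^{-1}(D)$ carries positive mass for some $\mu\in M(X_0,G)$ this contributes genuine extra entropy, so that $h_{\mu}(G,X_0|Y)>0$. To cure this I would not symbolize with the single map $\rho$, but do it block by block along F\o lner tilings of $G$, so that each new generation of symbols is created only over tile boundaries. Concretely, I would construct $X$ as the inverse limit of a tower of zero-dimensional $G$-systems $Y=X_0\leftarrow X_1\leftarrow X_2\leftarrow\cdots$ in which $X_k\to X_{k-1}$ resolves the $k$-th coordinate $\phi_k$ together with its $G$-translates, using a sequence of tilings of $G$ whose boundary ratio tends to $0$ (as furnished by the Downarowicz--Huczek tiling theory \cite{dh13,huc21}). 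Counting the admissible symbol patterns along a F\o lner set and invoking the Ornstein--Weiss theorem (Lemma \ref{lem 2.1}) should then give $h_{\mu}(G,X_k|X_{k-1})=0$ for every $\mu\in M(X_k,G)$ and every $k$. By the standard subadditivity of conditional entropy along the tower one gets $h_{\mu}(G,X|Y)\le\sum_{k\ge1}h_{\mu}(G,X_k|X_{k-1})=0$ for all $\mu\in M(X,G)$, so $\pi\colon X\to Y$ is a principal extension in the sense of Definition \ref{def 2.4}; moreover $X$, being an inverse limit of zero-dimensional compact metric spaces, is itself zero-dimensional.

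The hard part, and the reason the crude pullback $X_0$ does not already suffice, is the uniformity of the entropy bound over \emph{all} of $M(X,G)$ at once: for a fixed $\phi\in C(Y,[0,1])$ the levels $t$ with $\mu(\phi^{-1}\{t\})>0$ form a countable set for each single $\mu$, but they may fill up the whole interval as $\mu$ ranges over $M(Y,G)$ (already for the trivial $G$-action on $[0,1]$), so no fixed finite resolution of the alphabet works for every invariant measure simultaneously. This is exactly where amenability of $G$ must be exploited, through the existence of F\o lner tilings with arbitrarily small boundary \cite{dh13}: coding each coordinate along such tilings confines the freshly created symbols to a subset of $G$ of density zero regardless of the measure, and the Ornstein--Weiss theorem upgrades ``density zero'' into ``conditional entropy zero'' uniformly. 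Granting this tiling input, the inverse-limit assembly above is routine, and the full argument is carried out in \cite[Theorem 3.2]{huc21}.
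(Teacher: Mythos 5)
First, a point of comparison: the paper does not actually prove Theorem \ref{thm 2.8} --- it is imported verbatim as \cite[Theorem 3.2]{huc21} --- and your proposal also ends by deferring to that same reference, so you and the paper agree on the ultimate source. What can be assessed is your intermediate sketch. Its first half is sound: the equivariant embedding of $(Y,G)$ into $([0,1]^{\N})^{G}$ via a separating family, the pullback $X_0$ through the binary-expansion map $\rho$, and the observation that $X_0$ is a zero-dimensional extension which need \emph{not} be principal are all correct. Indeed, if $Y\subset\{0,1/2\}^{G}$ is a full shift and $\phi$ is evaluation at $e_G$, the fibre of $\pi_0$ over $y$ contains at least $2^{\#\{g\in F_n:\,y_g=1/2\}}$ points that are pairwise $\epsilon_0$-separated in $d_{F_n}$ for a fixed $\epsilon_0>0$, so the Bernoulli measure forces $h_{\rm top}(G,X_0|Y)\geq\frac{1}{2}\log 2$. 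Identifying this obstruction, and the fact that no single choice of thresholds works for all invariant measures simultaneously, is genuinely the right diagnosis.

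The gap is in the proposed repair. The ambiguous coordinates of a fibre point over $y$ are exactly the pairs $(g,k)$ with $\phi_k(gy)\in D$; for a fixed invariant measure the proportion of such $g$ in $F_n$ is governed by $\mu(\phi_k^{-1}(D))$, which can be positive, and this set of group elements is dictated by the \emph{orbit of $y$}, not by any a priori decomposition of $G$. A F\o lner tiling fixes in advance a low-density subset of $G$ (the tile boundaries), and no amount of ``coding block by block'' can relocate an orbit-dependent, positive-density ambiguity locus onto it: discretizing $[0,1]^{F}$ tile-by-tile merely replaces level sets of $\phi_k$ by level sets of finitely many coordinate functions, whose $\mu$-mass is unchanged. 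So the assertion that $h_{\mu}(G,X_k|X_{k-1})=0$ ``should'' follow from an Ornstein--Weiss count is precisely the statement requiring proof, and your construction supplies no mechanism for it. The arguments of \cite{dh13,huc21} are of a different nature: one manufactures, over a suitable auxiliary extension, \emph{clopen} partitions refining prescribed open covers --- so that there is no boundary upstairs at all --- and the tilings enter in building those clopen partitions with controlled conditional entropy, not in confining a pre-existing ambiguity. As a pointer to the literature your proposal lands in the right place; as a self-contained proof it is missing its central step.
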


\section{Proofs of main result}
In this section, we divide the proof of Theorem  \ref{thm 1.1} into two parts.
 
\subsection{Proof of lower bound} 

\begin{lem}\label{lem 3.1}
	Let $p_1,\cdots,p_n$ be non-negative numbers with $\sum_{i=1}^n p_i=1$. Then for any real numbers $x_1,\cdots,x_n$, 
	$$\sum_{i=1}^n(-p_i\log p_i+p_ix_i)\le\log\sum_{i=1}^n e^{x_i}.$$
\end{lem}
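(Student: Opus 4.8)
The plan is to recognise Lemma~\ref{lem 3.1} as the standard variational (Gibbs) inequality for the log-partition function and to derive it from Jensen's inequality for the concave function $\log$. Set $Z=\sum_{i=1}^n e^{x_i}>0$ and introduce the auxiliary probability vector $q_i=e^{x_i}/Z$, so that $\sum_{i=1}^n q_i=1$. First I would reduce to the case $p_i>0$ for all $i$: if some $p_i=0$, the corresponding summand $-p_i\log p_i+p_ix_i$ vanishes by the convention $0\cdot\log 0=0$, so we may simply discard those indices and relabel, noting that the right-hand side only decreases (or stays the same) when we drop the terms $e^{x_i}$ from $Z$; hence it suffices to prove the inequality with strictly positive weights, which we now assume.

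The key step is the following rewriting. The claimed inequality $\sum_{i=1}^n(-p_i\log p_i+p_ix_i)\le\log Z$ is equivalent, after moving everything to one side and using $\sum_i p_i=1$, to
$$\sum_{i=1}^n p_i\log\!\left(\frac{e^{x_i}}{p_i Z}\right)\le 0,$$
i.e.\ to $\sum_{i=1}^n p_i\log(q_i/p_i)\le 0$. Since $\log$ is concave and $(p_i)_{i=1}^n$ is a probability vector, Jensen's inequality gives
$$\sum_{i=1}^n p_i\log\!\left(\frac{q_i}{p_i}\right)\le \log\!\left(\sum_{i=1}^n p_i\cdot\frac{q_i}{p_i}\right)=\log\!\left(\sum_{i=1}^n q_i\right)=\log 1=0,$$
which is exactly what we need. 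Tracing back through the equivalence then yields the lemma.

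I do not anticipate a genuine obstacle here; this is a routine and classical estimate. If one prefers to avoid invoking Jensen, the same conclusion follows from the elementary bound $\log t\le t-1$ applied with $t=q_i/p_i$: multiplying by $p_i$ and summing gives $\sum_i p_i\log(q_i/p_i)\le \sum_i(q_i-p_i)=1-1=0$. Either route is short; the only point that needs a word of care is the bookkeeping for vanishing $p_i$, handled as above via the convention $0\cdot\log 0=0$.
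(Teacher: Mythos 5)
Your proof is correct: the reduction to strictly positive $p_i$, the rewriting of the difference as $\sum_i p_i\log(q_i/p_i)$ with $q_i=e^{x_i}/Z$, and the application of Jensen's inequality (or the bound $\log t\le t-1$) are all sound, and this is the standard argument for this classical inequality (cf.\ Lemma 9.9 in Walters' book). The paper itself states Lemma~\ref{lem 3.1} without proof, so there is nothing to compare against; your write-up fills that gap correctly.
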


\begin{proof}[Proof of lower bound]   Fix $0\le\omega\le1$. It suffices to show for every $\mu \in M(X,G)$,
   $$\omega h_{\mu}(G,X)+(1-\omega)h_{\pi_{*}\mu}(G,Y)+\omega\int_X fd\mu\leq P^{\omega}(\pi,f,G).$$
	Fix  $\mu\in M(X,G)$  and let $\epsilon>0$. Since  compact metric spaces $X$ and $Y$  admits the finite $\frac{\epsilon}{4}$-nets, one can choose an open cover $\UU\in\CC_X^o$ with $\diam(\UU,d)\leq \epsilon$, $\Leb(\UU)\geq\frac{\epsilon}{4}$,  and  an open cover $\VV\in\CC_Y^o$ with $\diam(\VV,d^{'})\leq\epsilon$, $\Leb(\VV)\geq \frac{\epsilon}{4}$. Assume that $\alpha\in\PP_X$ with $\alpha\succeq\UU^{F_n}$, $\beta\in\PP_Y$ with $\beta\succeq\VV^{F_n}$.
Let  $\{F_n\}_{n\ge1}$  be a  F\o lner sequence of $G$. One has
	\begin{align}\label{equ 3.1}
		\begin{split}
			&\omega H_{\mu}(\UU^{F_n})+(1-\omega)H_{\pi_{*}\mu}(\VV^{F_n})+\omega\int_X S_{F_n}fd\mu\\
			\le& \omega H_{\mu}(\alpha)+(1-\omega)H_{\pi_{*}\mu}(\beta)+\omega\int_X S_{F_n}fd\mu\\
			=&H_{\pi_{*}\mu}(\beta)+\omega(H_{\mu}(\alpha)-
			H_{\pi_{*}\mu}(\beta))+\omega\int_X S_{F_n}fd\mu\\
			\le&
			H_{\pi_{*}\mu}(\beta)+\omega H_{\mu}(\alpha|\pi^{-1}\beta)+\omega\int_X S_{F_n}fd\mu.
		\end{split}
	\end{align}
	For every $B\in\beta$, we  set
	$$\alpha_B=\{A\in\alpha: A\cap \pi^{-1}(B)\neq\emptyset\}.$$
	Then  $\pi^{-1}(B)=\cup_{A\in\alpha_B}(A\cap\pi^{-1}(B))$.
	Notice that
	\begin{align}
		\begin{split}
			\int_XS_{F_n}fd\mu&=\sum_{B\in\beta}\int_{\pi^{-1}(B)}S_{F_n}fd\mu
		\\&=\sum_{B\in\beta}\sum_{A\in\alpha_B}\int_{A\cap \pi^{-1}(B)}S_{F_n}fd\mu\\&\le \sum_{B\in\beta}\sum_{A\in\alpha_B}\mu(A\cap \pi^{-1}(B))\sup_{x\in A\cap \pi^{-1}(B)}S_{F_n}f(x)\\&= \sum_{B\in\beta}\mu(\pi^{-1}(B))\sum_{A\in\alpha_B}\frac{\mu(A\cap \pi^{-1}(B))}{\mu(\pi^{-1}(B))}\sup_{x\in A\cap \pi^{-1}(B)}S_{F_n}f(x),
		\end{split}	
	\end{align}
and 
	\begin{align}
		\begin{split}
			H_{\mu}(\alpha|\pi^{-1}\beta)&=-\sum_{B\in\beta}\mu(\pi^{-1}(B))
		\sum_{A\in\alpha}\frac{\mu(A\cap \pi^{-1}(B))}{\mu(\pi^{-1}(B))}\log\frac{\mu(A\cap \pi^{-1}(B))}{\mu(\pi^{-1}(B))}\\
		&=-\sum_{B\in\beta}\mu(\pi^{-1}(B))
		\sum_{A\in\alpha_B}\frac{\mu(A\cap \pi^{-1}(B))}{\mu(\pi^{-1}(B))}\log\frac{\mu(A\cap \pi^{-1}(B))}{\mu(\pi^{-1}(B))}.
		\end{split}	
	\end{align}
Hence, we get
	\begin{align}
		\begin{split}
		&H_{\mu}(\alpha|\pi^{-1}\beta)+\int_XS_{F_n}fd\mu\\
		\le&
		\sum_{B\in\beta}\mu(\pi^{-1}(B))\left\{\sum_{A\in\alpha_B}\frac{\mu(A\cap \pi^{-1}(B))}{\mu(\pi^{-1}(B))}(-\log \frac{\mu(A\cap \pi^{-1}(B))}{\mu(\pi^{-1}(B))}+\sup_{x\in A\cap \pi^{-1}(B)}S_{F_n}f(x))\right\}\\
		\le& \sum_{B\in\beta}\pi_{*}\mu(B)\log\sum_{A\in\alpha_B}e^{\sup_{x\in A\cap \pi^{-1}(B)}S_{F_n}f(x)},~\text {by Lemma~\ref{lem 3.1}}.
		\end{split}		
	\end{align}
This implies that
	\begin{align}
		\begin{split}
			&H_{\pi_{*}\mu}(\beta)+\omega H_{\mu}(\alpha|\pi^{-1}\beta)+\omega\int_X S_{F_n}fd\mu\\
			\le&
		-\sum_{B\in\beta}\pi_{*}\mu(B)\log\pi_{*}\mu(B)+
		\sum_{B\in\beta}\pi_{*}\mu(B)\log(\sum_{A\in\alpha_B}e^{\sup_{x\in A\cap \pi^{-1}(B)}S_{F_n}f(x)})^{\omega}\\
		\le&
		\log\sum_{B\in\beta}(\sum_{A\in\alpha_B}e^{\sup_{x\in A\cap \pi^{-1}(B)}S_{F_n}f(x)})^{\omega}, ~\text {by Lemma~\ref{lem 3.1}~ again}.
		\end{split}	
	\end{align}
Taking infimums on $\alpha$, $\beta$   for (\ref{equ 3.1}),
	\begin{align}\label{equ 3.6}
		\begin{split}
			&\omega H_{\mu}(\UU^{F_n})+(1-\omega)H_{\pi_{*}\mu}(\VV^{F_n})+\omega\int_X S_{F_n}fd\mu\\
			\le&\log\inf_{\alpha\in\PP_X, \alpha\succeq\UU^{F_n},\atop
			\beta\in\PP_Y,\beta\succeq\VV^{F_n}.}
			\{\sum_{B\in\beta}(\sum_{A\in\alpha_B}e^{\sup_{x\in A\cap \pi^{-1}(B)}S_{F_n}f(x)})^{\omega}\}\\
			\le&\log\inf_{\alpha\in\PP_X, \alpha\succeq\UU^{F_n}, \atop
			\beta\in\PP_Y,\beta\succeq\VV^{F_n},
			\alpha\succeq\pi^{-1}\beta.}
			\{\sum_{B\in\beta}(\sum_{A\in\alpha_B}e^{\sup_{x\in A\cap \pi^{-1}(B)}S_{F_n}f(x)})^{\omega}\}.
		\end{split}
		\end{align}
	
	Let $\gamma>0$.  There exists $\VV_0=\{V_i\}_{i=1}^k\in \CC_Y^o$ with $\diam(\VV_0,d_{F_n}^{'})<\frac{\epsilon}{8}$ such that
	\begin{align}\label{17}
		\sum_{i=1}^k(P(\pi^{-1}(V_i),f,F_n,\frac{\epsilon}{8}))^{\omega}<
		P^{\omega}(\pi,f,F_n,\frac{\epsilon}{8})+\frac{\gamma}{2}.
	\end{align}
	Let $B_1=V_1$,
	$B_i=V_i\setminus\cup_{t=1}^{i-1}V_t$ for $i=2,\cdots, k$. Then  $\beta=\{B_i\}_{i=1}^k \in \mathcal{P}_Y$ with  $\diam(\beta,d_{F_n}^{'})<\frac{\epsilon}{8}$. Since $\Leb(\VV)>\frac{\epsilon}{4}$, we  have $\beta\succeq\VV^{F_n}$ and 
	\begin{align}
		\begin{split}
			\sum_{i=1}^k(P(\pi^{-1}(B_i),f,F_n,\frac{\epsilon}{8}))^{\omega}
		\leq& \sum_{i=1}^k(P(\pi^{-1}(V_i),f,F_n,\frac{\epsilon}{8}))^{\omega}\\
		<& \sum_{i=1}^k(P(\pi^{-1}(V_i),f,F_n,\frac{\epsilon}{8}))^{\omega}+\frac{\gamma}{2}.
		\end{split}
	\end{align}

	So there exists $\theta>0$ such that
	\begin{align}\label{equ 3.9}
		\sum_{i=1}^k(P(\pi^{-1}(B_i),f,F_n,\frac{\epsilon}{8})+\theta)^{\omega}
		<	P^{\omega}(\pi,f,F_n,\frac{\epsilon}{8})+\gamma.
	\end{align}
	For each $i\in\{1,\cdots,k\}$, we can choose open cover $\UU^{i}=\{U^{i}_j\}_{j=1}^{m_i}$ of $X$ with $\diam(\UU^{i},d_{F_n})<\frac{\epsilon}{8}$ so that
	$$\pi^{-1}(B_i)\subset U^{i}_1\cup\cdots\cup U^{i}_{m_i}$$
and 
\begin{align}\label{equ 3.10}
\sum_{j=1}^{m_i}e^{\sup_{U^{i}_j}S_{F_n}f}<P(\pi^{-1}(B_i),f,F_n,
\frac{\epsilon}{8})+\theta.
\end{align}
	Set $A^{i}_1=U^{i}_1 \cap\pi^{-1}(B_i)$,
	$A^{i}_j=(U^{i}_j\setminus\cup_{t=1}^{j-1}U^{i}_t) \cap\pi^{-1}(B_i)$ for $j=2,\cdots, m_i$.  Then for every $1\leq i\leq k$,  $\alpha^{i}=\{{A}^{i}_j\}_{j=1}^{m_i} $  is a  finite partition of $\pi^{-1}(B_i)$ with $\diam(\alpha^{i},d_{F_n})<\frac{\epsilon}{8}$. Since $\Leb(\UU)>\frac{\epsilon}{4}$, we have  $\alpha^{i}\succeq\UU^{F_n}$ and
\begin{align}\label{equ 3.11}
\sum_{j=1}^{m_i}e^{\sup_{{A}^{i}_j}S_{F_n}f}\le\sum_{j=1}^
{m_i}e^{\sup_{U^{i}_j}S_{F_n}f}
\end{align}
	Thus, by inequalities (\ref{equ 3.9}), (\ref{equ 3.10}),  (\ref{equ 3.11}), we get
	\begin{align}
		\sum_{i=1}^k(\sum_{j=1}^{m_i}e^{\sup_{{A}^{i}_j}S_{F_n}f})^{\omega}<
		P^{\omega}(\pi,f,F_n,\frac{\epsilon}{8})+\gamma
	\end{align}
Put $\alpha=\{{A}^{i}_j: j=1,\cdots,m_i,i=1,\cdots,k\}\in \mathcal{P}_X$. Then  $\alpha\succeq\UU^{F_n}$ and $\alpha\succeq\pi^{-1}\beta$.
It follows  from inequality (\ref{equ  3.6}) that
	\begin{align}\label{equ 3.13}
		\omega H_{\mu}(\UU^{F_n})+(1-\omega)H_{\pi_{*}\mu}(\VV^{F_n})+\omega\int_X S_{F_n}fd\mu\leq  \log (P^{\omega}(\pi,f,F_n,\frac{\epsilon}{8}).
	\end{align}
	by letting $\gamma \to 0$.
  Noticing that $\int_X S_{F_n}fd\mu=|F_n|f$ and then  dividing $|F_n|$ in both side of  the inequality (\ref{equ 3.13}), we have
	\begin{align*}
 	\omega 	h_{\mu}(G,\UU)+(1-\omega) 	h_{\pi_{*}\mu}(G,\UU)+\omega\int_X fd\mu\leq \lim\limits_{n \to \infty}\frac{\log (P^{\omega}(\pi,f,F_n,\frac{\epsilon}{8}))}{|F_n|}.
	\end{align*}
Since $\epsilon >0$ is arbitrary,  by  Lemma \ref{lem 2.3} we get desired result.

\end{proof}

\subsection{Proof of upper bound}
In this subsection, we are devoted to proving    converse inequality
\begin{align*}
P^{\omega}(\pi,f,G)&\leq\sup_{\mu\in M(X,G)}\{\omega h_{\mu}(G,X)+(1-\omega)h_{\pi\mu}(G,Y)+\omega\int_X fd\mu\}\\
&:= P^{\omega}_{\text{var}}(\pi,f,G).
\end{align*}
The next  several lemmas majors the  relations of  weighted   amenable  topological pressure in terms of different factor maps. 
\begin{lem}\label{lem 3.2}
	Let $\pi: (X,G)\rightarrow (Y,G), \varphi: (X^{'}, G)\rightarrow (X,G)$ be two factor maps between $G$-systems. Let $f\in C(X,\mathbb{R})$.
	\begin{align*}
	\xymatrix{
		(X^\prime,G)\ar[r]^{\varphi}\ar[dr]^
		{\pi\circ\varphi}&(X,G)\ar[d]^{\pi}\\
		&(Y,G)}
	\end{align*}
	Then for each $0\leq \omega \leq 1$,
	$$P^{\omega}(\pi,f,G)\le P^{\omega}(\pi\circ\varphi,f\circ\varphi,G).$$

\end{lem}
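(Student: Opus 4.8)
\textbf{Proof plan for Lemma~\ref{lem 3.2}.}

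The plan is to compare the two weighted pressures scale by scale and then pass to the F\o lner and $\epsilon\to 0$ limits. Fix a metric $\rho$ on $X'$ (in addition to the metrics $d$ on $X$ and $d'$ on $Y$) and set $\rho_F(a,b)=\max_{g\in F}\rho(ga,gb)$ for $F\in\FF(G)$. Since $\varphi$ is continuous on the compact space $X'$ and $\varphi(gx')=g\varphi(x')$, it is uniformly continuous in a $G$-equivariant way: for every $\epsilon>0$ there is $\epsilon'\in(0,\epsilon]$ such that $\rho_F(a,b)\le\epsilon'$ forces $d_F(\varphi(a),\varphi(b))<\epsilon$ for every $F\in\FF(G)$. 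Also $f\circ\varphi\in C(X',\R)$ with $S_F(f\circ\varphi)(x')=S_Ff(\varphi(x'))$, and by uniform continuity of $f$ we have $|S_Ff(x)-S_Ff(y)|\le|F|\,\omega_f(\eta)$ whenever $d_F(x,y)<\eta$, where $\omega_f$ is the modulus of continuity of $f$. I claim that, for this $\epsilon'$ and every $F\in\FF(G)$, $P^{\omega}(\pi,f,F,\epsilon)\le P^{\omega}(\pi\circ\varphi,f\circ\varphi,F,\epsilon')$. Granting this, applying it with $F=F_n$, dividing by $|F_n|$ and letting $n\to\infty$ (both limits exist by Lemma~\ref{lem 2.1}) gives $\lim_n\frac{1}{|F_n|}\log P^{\omega}(\pi,f,F_n,\epsilon)\le\lim_n\frac{1}{|F_n|}\log P^{\omega}(\pi\circ\varphi,f\circ\varphi,F_n,\epsilon')\le P^{\omega}(\pi\circ\varphi,f\circ\varphi,G)$; since the left-hand side tends to $P^{\omega}(\pi,f,G)$ as $\epsilon\to 0$ by definition, the lemma follows.

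To prove the finite-scale inequality, fix $F\in\FF(G)$ and $\gamma>0$ and choose a near-optimal configuration for the right-hand side: an open cover $Y=V_1\cup\cdots\cup V_k$ with $\diam(V_i,d'_F)<\epsilon'$ and, for each $i$, an open cover $\{W^i_j\}_{j=1}^{m_i}$ of $(\pi\circ\varphi)^{-1}(V_i)=\varphi^{-1}(\pi^{-1}(V_i))$ with $\diam(W^i_j,\rho_F)<\epsilon'$ and $\sum_{i}\bigl(\sum_{j}e^{\sup_{W^i_j}S_F(f\circ\varphi)}\bigr)^{\omega}<P^{\omega}(\pi\circ\varphi,f\circ\varphi,F,\epsilon')+\gamma$ (pick the $V_i$ near-optimal for the outer infimum, then the $W^i_j$ near-optimal for each inner infimum, using continuity of $t\mapsto t^{\omega}$). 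The cover $\{V_i\}$ is admissible for $P^{\omega}(\pi,f,F,\epsilon)$ because $\epsilon'\le\epsilon$, so it remains to produce, for each $i$, an open cover of $\pi^{-1}(V_i)$ in $X$ by sets of $d_F$-diameter $<\epsilon$ whose weights nearly match those of $\{W^i_j\}_j$. Since $X$ is compact, $K^i_j:=\overline{\varphi(W^i_j)}$ is compact, and as $\diam(\overline{W^i_j},\rho_F)\le\epsilon'$ the choice of $\epsilon'$ gives $\diam(K^i_j,d_F)<\epsilon$. Let $U^i_j=\{x\in X:d_F(x,K^i_j)<\eta^i_j\}$ with $\eta^i_j>0$ small enough that $\diam(U^i_j,d_F)\le\diam(K^i_j,d_F)+2\eta^i_j<\epsilon$ and $|F|\,\omega_f(\eta^i_j)<\tau$ for a prescribed $\tau>0$; then $U^i_j$ is open. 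Using surjectivity of $\varphi$ we get $\pi^{-1}(V_i)\subseteq\bigcup_j\varphi(W^i_j)\subseteq\bigcup_jU^i_j$, and $\sup_{U^i_j}S_Ff\le\sup_{K^i_j}S_Ff+\tau=\sup_{W^i_j}S_F(f\circ\varphi)+\tau$, so $P(\pi^{-1}(V_i),f,F,\epsilon)\le e^{\tau}\sum_je^{\sup_{W^i_j}S_F(f\circ\varphi)}$. Raising to the $\omega$-th power and summing over $i$ yields $P^{\omega}(\pi,f,F,\epsilon)\le e^{\omega\tau}\bigl(P^{\omega}(\pi\circ\varphi,f\circ\varphi,F,\epsilon')+\gamma\bigr)$; letting $\tau\to0$ and then $\gamma\to0$ proves the claim.

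The one genuinely technical point is this pushforward step: the continuous image $\varphi(W^i_j)$ of an open set need not be open, so it cannot be plugged directly into the defining covers for $P(\pi^{-1}(V_i),f,F,\epsilon)$. The remedy is to replace it by a tiny $d_F$-neighbourhood of its closure, which is legitimate because that closure is compact (here compactness of $X$ is used) and because $f$, hence $S_Ff$ for the fixed $F=F_n$, is uniformly continuous, so the enlargement costs only an arbitrarily small multiplicative factor $e^{\omega\tau}$ in the weights. All the $|F_n|$-dependent error terms are disposed of while $F_n$ is still fixed, so dividing by $|F_n|$ afterwards causes no trouble.
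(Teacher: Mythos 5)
Your argument is correct and follows essentially the same route as the paper: a fixed-scale comparison $P^{\omega}(\pi,f,F,\epsilon)\le P^{\omega}(\pi\circ\varphi,f\circ\varphi,F,\epsilon')$ obtained from uniform continuity of $\varphi$ together with the identity $\varphi((\pi\circ\varphi)^{-1}(V))=\pi^{-1}(V)$, followed by the F\o lner and $\epsilon\to0$ limits. The only point where you go beyond the paper is in replacing the (generally non-open) images $\varphi(W^i_j)$ by small $d_F$-neighbourhoods of their closures at the cost of a factor $e^{\omega\tau}$; the paper's proof passes silently from a cover of $\Omega$ to a cover of $\varphi(\Omega)$, so your extra step is a legitimate patch of that implicit gap rather than a different approach.
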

\begin{proof}
	Let $\widetilde{d}$ be a metric on $X^{'}$.   Let  $\{F_n\}_{n\ge1}$  be a  F\o lner sequence of $G$.  For any $\epsilon>0$, there exists $0<\delta<\epsilon$ such that for any $x_1,x_2\in X^{'}$, one has
	$$\widetilde{d}(x_1,x_2)<\delta\implies d(\varphi(x_1),\varphi(x_2))<\epsilon.$$
	Then for every $F\in \mathcal{F}(G)$,
	$$\widetilde{d}_{F}(x_1,x_2)<\delta\implies d_{F}(\varphi(x_1),\varphi(x_2))<\epsilon.$$
 This implies that for any  $\Omega\subset X^{'}$,
	$$P(\varphi(\Omega),f,F_n,\epsilon)\le P(\Omega,f\circ\varphi,F_n,\delta).$$
Notice that for any $V\subset Y$,
	$\varphi((\pi\circ\varphi)^{-1}(V))=\pi^{-1}(V).$ Therefore,
	$$P(\pi^{-1}(V),f,F_n,\epsilon)\le P((\pi\circ\varphi)^{-1}(V),f\circ\varphi,F_n,\delta).$$
	Let $\VV=\{V_1,\cdots,V_k\}$ be an open cover of $Y$ with $\diam(\VV,d^{'}_{F_n})<\delta$. Then
	\begin{align*}
		P^{\omega}(\pi,f,F_n,\epsilon)&\le\sum_{i=1}^k(P(\pi^{-1}(V_i),
		f,F_n,\epsilon))^{\omega}\\&\le\sum_{i=1}^k
		P((\pi\circ\varphi)^{-1}(V_i),f\circ\varphi,F_n,\delta)^{\omega},
	\end{align*}
	which implies that
	$$P^{\omega}(\pi,f, F_n,\epsilon)\le P^{\omega}(\pi\circ\varphi,f\circ\varphi, F_n,\delta).$$
This completes the proof.
	
\end{proof}

\begin{lem}\label{lem 3.3}
	Let $\pi: (X,G)\rightarrow (Y,G), \phi: (Y^{'}, G)\rightarrow (Y,G)$ be  two  factor maps between $G$-systems. Define the fiber product
	$$X\times_Y Y^{'}=\{(x,y)\in X\times Y^{'}:\pi(x)=\phi(y)\},$$
	 and $(X\times_Y Y^{'},G)$ is a  $G$-system defined by $g(x,y)=(gx,gy)$. We define factor maps: $\varphi:X\times_Y Y^{'}\rightarrow X$ and $\Pi:X\times_Y Y^{'}\rightarrow Y^{'}$ by $$\varphi(x,y)=x\text{,  }\Pi(x,y)=y.$$
	 \begin{align*}
	 	\xymatrix{
	 	(X\times_{Y}Y^\prime,G)\ar[r]^-{\varphi}\ar[d]^{\Pi}&(X,G)\ar[d]^{\pi}\\
	 	(Y^\prime,G)\ar[r]^-{\phi}&(Y,G)
	 }
	 \end{align*}
	Then    for every $f\in C(X,\mathbb{R})$,
	$$P^{\omega}(\pi,f,G)\le P^{\omega}(\Pi,f\circ\varphi,G).$$
\end{lem}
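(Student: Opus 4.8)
The plan is to reduce the fiber-product situation to the composition situation already handled in Lemma~\ref{lem 3.2}, by exhibiting a suitable factor map. Consider the commutative square in the statement: we have $\varphi\colon X\times_Y Y'\to X$ and $\pi\circ\varphi=\phi\circ\Pi\colon X\times_Y Y'\to Y$. The key observation is that the fiber product $X\times_Y Y'$, equipped with the projection $\varphi$ onto $X$, is a factor extension of $(X,G)$, and the factor map $\Pi$ onto $Y'$ fits together with $\phi$ so that $\pi\circ\varphi=\phi\circ\Pi$. So I would like to apply Lemma~\ref{lem 3.2} with $X'$ replaced by $X\times_Y Y'$, the map $\varphi$ of Lemma~\ref{lem 3.2} being our $\varphi$, and the map $\pi$ of Lemma~\ref{lem 3.2} being our $\pi$. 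That would give directly
\begin{align*}
P^{\omega}(\pi,f,G)\le P^{\omega}(\pi\circ\varphi,f\circ\varphi,G).
\end{align*}
So it remains only to prove $P^{\omega}(\pi\circ\varphi,f\circ\varphi,G)\le P^{\omega}(\Pi,f\circ\varphi,G)$, i.e. that passing from the bottom factor map $\pi\circ\varphi=\phi\circ\Pi$ (with target $Y$) to the finer factor map $\Pi$ (with target $Y'$) does not decrease the weighted pressure.

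The second inequality is an instance of a more basic monotonicity: if a factor map $p\colon (Z,G)\to(W,G)$ factors through another factor map $q\colon(Z,G)\to(W',G)$, say $p=r\circ q$ with $r\colon W'\to W$, then $P^{\omega}(p,h,G)\le P^{\omega}(q,h,G)$ for every $h\in C(Z,\mathbb R)$. I would prove this directly from the definition: given an open cover $\VV'=\{V'_i\}$ of $W'$ with $\diam(V'_i,d'_{F_n})$ small, its image structure pulls back to an open cover of $W$ only in the wrong direction, so instead I use that $p^{-1}(V)=q^{-1}(r^{-1}(V))$ for $V\subset W$; hence for any open cover $\VV=\{V_i\}$ of $W$ one has $r^{-1}(\VV)=\{r^{-1}(V_i)\}$ an open cover of $W'$, and, since $r$ is uniformly continuous, $\diam(r^{-1}(V_i),d'_{W'})$ can be controlled by $\diam(V_i,d'_W)$ up to shrinking $\epsilon$ to some $\delta$, compatibly with the dynamics (i.e.\ $\diam(V_i,d'_{W,F_n})<\delta\Rightarrow\diam(r^{-1}(V_i),d'_{W',F_n})<\epsilon$, exactly as in the proof of Lemma~\ref{lem 3.2}). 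Since $p^{-1}(V_i)=q^{-1}(r^{-1}(V_i))$, the quantity $P(p^{-1}(V_i),h,F_n,\epsilon)$ equals $P(q^{-1}(r^{-1}(V_i)),h,F_n,\epsilon)$, and taking infima over covers of $W$ (a subclass, via $r^{-1}$, of covers of $W'$) yields $P^{\omega}(q,h,F_n,\delta)\le P^{\omega}(p,h,F_n,\epsilon)$ — wait, the inequality goes the convenient way: covers of $W'$ of the form $r^{-1}(\VV)$ are among all covers of $W'$, so the infimum over all covers of $W'$ is $\le$ the infimum over this subclass, giving $P^{\omega}(q,h,F_n,\epsilon)\le P^{\omega}(p,h,F_n,\delta)$; letting $\epsilon\to0$ gives $P^{\omega}(q,h,G)\le P^{\omega}(p,h,G)$, the reverse of what I want. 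This sign issue is the point to be careful about, so let me reconsider the correct route below.

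The cleaner approach avoiding the direction trap: apply Lemma~\ref{lem 3.2} twice, or rather once more, to the triangle
\begin{align*}
\xymatrix{
(X\times_{Y}Y^\prime,G)\ar[r]^-{\Pi}\ar[dr]_-{\phi\circ\Pi}&(Y^\prime,G)\ar[d]^{\phi}\\
&(Y,G)}
\end{align*}
with the roles in Lemma~\ref{lem 3.2} assigned as: the ``$X'$'' is $X\times_Y Y'$, the ``$\varphi$'' is $\Pi$, the ``$X$'' is $Y'$, the ``$\pi$'' is $\phi$, and the function on $X\times_Y Y'$ is $f\circ\varphi$. Lemma~\ref{lem 3.2} as stated needs a factor map $\pi$ out of the middle space and a factor map $\varphi$ into it from a bigger space; here $\phi\colon Y'\to Y$ is $\pi$, $\Pi\colon X\times_Y Y'\to Y'$ is $\varphi$, and the composite $\phi\circ\Pi$ is $\pi\circ\varphi$. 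The conclusion of Lemma~\ref{lem 3.2} then reads
\begin{align*}
P^{\omega}(\phi,g,G)\le P^{\omega}(\phi\circ\Pi,g\circ\Pi,G)
\end{align*}
for $g\in C(Y',\mathbb R)$ — but that is about functions on $Y'$, not on $X$, so it is still not quite the needed statement. The genuinely correct decomposition is: first, by Lemma~\ref{lem 3.2} applied to $\varphi\colon X\times_Y Y'\to X$ over $\pi\colon X\to Y$, get $P^{\omega}(\pi,f,G)\le P^{\omega}(\pi\circ\varphi, f\circ\varphi,G)$; second, observe $\pi\circ\varphi=\phi\circ\Pi$, so this equals $P^{\omega}(\phi\circ\Pi,f\circ\varphi,G)$; third, prove the inequality $P^{\omega}(\phi\circ\Pi, f\circ\varphi,G)\le P^{\omega}(\Pi,f\circ\varphi,G)$ directly from the definitions using $(\phi\circ\Pi)^{-1}(V)=\Pi^{-1}(\phi^{-1}(V))$ and the uniform continuity of $\phi$, exactly as in Lemma~\ref{lem 3.2}'s argument but now noting that every cover $\VV$ of $Y$ yields the cover $\phi^{-1}(\VV)$ of $Y'$ with controlled diameters, so that the infimum defining $P^{\omega}(\Pi,\cdot,F_n,\delta)$ over all covers of $Y'$ is bounded above by its value on the subfamily $\{\phi^{-1}(\VV)\}$, which matches the sum appearing in $P^{\omega}(\phi\circ\Pi,\cdot,F_n,\epsilon)$. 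Chaining the three steps and letting $\epsilon\to0$ through the relation $\epsilon\to\delta(\epsilon)\to0$ gives $P^{\omega}(\pi,f,G)\le P^{\omega}(\Pi,f\circ\varphi,G)$. The main obstacle, as the above false starts show, is bookkeeping the direction of the cover-pullback and the $\epsilon$–$\delta$ dependence correctly; once the maps are matched to the template of Lemma~\ref{lem 3.2} this is routine.
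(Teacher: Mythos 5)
Your step (1) (applying Lemma \ref{lem 3.2} to $\varphi\colon X\times_Y Y'\to X$) is fine, and $\pi\circ\varphi=\phi\circ\Pi$ is correct, but the whole weight of the lemma then falls on your step (3), the monotonicity $P^{\omega}(\phi\circ\Pi,f\circ\varphi,G)\le P^{\omega}(\Pi,f\circ\varphi,G)$, and your argument for it does not work. First, the inequality you actually derive there is backwards: restricting the infimum defining $P^{\omega}(\Pi,\cdot,F_n,\delta)$ to the subfamily of covers of $Y'$ of the form $\phi^{-1}(\VV)$ bounds $P^{\omega}(\Pi,\cdot)$ \emph{from above} by the value on that subfamily, i.e.\ it would give $P^{\omega}(\Pi,\cdot)\le P^{\omega}(\phi\circ\Pi,\cdot)$ --- the reverse of what you need, and exactly the sign trap you flagged in your first attempt. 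Second, that subfamily is not even admissible: for a factor map $\phi$ the preimage $\phi^{-1}(V)$ contains entire fibres $\phi^{-1}(y)$, so $\diam(\phi^{-1}(V),\widetilde{d}_{F_n})$ is not controlled by $\diam(V,d'_{F_n})$; uniform continuity of $\phi$ controls diameters only in the forward direction. Running the forward direction instead (pushing a near-optimal cover of $Y'$ down to $Y$) for a general composition $p=r\circ q$ also fails, because $p^{-1}(W)=q^{-1}(r^{-1}(W))$ is in general strictly larger than the union of the pieces $q^{-1}(V'_j)$ you started from, so $P(p^{-1}(W),\cdot)$ cannot be bounded by the corresponding terms of the $Y'$-cover.

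The paper avoids this by never introducing $P^{\omega}(\phi\circ\Pi,\cdot)$: it compares $P^{\omega}(\pi,f,G)$ with $P^{\omega}(\Pi,f\circ\varphi,G)$ directly, using the identity $\pi^{-1}(\phi(A))=\varphi(\Pi^{-1}(A))$ for $A\subset Y'$, which is special to the fibre product and yields $P(\pi^{-1}(\phi(A)),f,F_n,\epsilon)\le P(\Pi^{-1}(A),f\circ\varphi,F_n,\delta)$ with no overshoot. One then takes a near-optimal open cover $\{V_i\}$ of $Y'$, shrinks to compact sets $A_i\subset V_i$ still covering $Y'$, pushes forward to the closed sets $\phi(A_i)$ --- which cover $Y$ by surjectivity of $\phi$ and have small $d'_{F_n}$-diameter by uniform continuity --- and fattens them to open sets $W_i$ of $Y$. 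If you want to keep your decomposition, you would have to prove step (3) by essentially this same fibre-product argument, at which point the detour through $\phi\circ\Pi$ buys nothing; the fibre-product identity is the missing idea.
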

\begin{proof}	
	Let $\widetilde{d}$ be a metric on $Y^{'}$. We  define a metric  $\rho$ on $X\times_Y Y^{'}$ by
	$$\rho((x_1,y_1),(x_2,y_2))=\max\{d(x_1,x_2),\widetilde{d}(y_1,y_2)\}.$$
Then 
	$$\rho((x_1,y_1),(x_2,y_2))<\epsilon\implies d(x_1,x_2)<\epsilon.$$
 Let $\{F_n\}_{n\ge1}$ be a  F\o lner sequence of $G$ and  $\Omega\subset X\times_Y Y^{'}$. We have
	$$P(\varphi(\Omega),f,F_n,\epsilon)\le P(\Omega,f\circ\varphi,F_n,\epsilon).$$
	For  each  $A\subset Y^{'}$,   $	\pi^{-1}(\phi(A))=\varphi(\Pi^{-1}(A)).$ It follows  that
	\begin{align}\label{equ 3.14}
		P(\pi^{-1}(\phi(A)),f,F_n,\epsilon)&=P(\varphi(\Pi^{-1}(A)),f,F_n,\epsilon)\\
		&\le P(\Pi^{-1}(A),f\circ\varphi,F_n,\epsilon)\nonumber.
	\end{align}
	Choose $0<\delta<\epsilon$ such that for any $y_1,y_2\in Y^{'}$ satisfying
	$$\widetilde{d}(y_1,y_2)<\delta\implies d^{'}(\phi(y_1),\phi(y_2))<\epsilon.$$
Then by (\ref{equ 3.14})
	\begin{align}\label{equ 3.15}
		P(\pi^{-1}(\phi(A)),f,F_n,\epsilon)\le P(\Pi^{-1}(A),f\circ\varphi,F_n,\delta).
	\end{align}
	 Let 
	$s>P^{\omega}(\Pi,f\circ\varphi,F_n,\delta).$
	Then there exists an open cover $Y^{'}=V_1\cup\cdots\cup V_k$ satisfying $\diam(V_i,\widetilde{d}_{F_n})<\delta$ for each $1\le i\le k$ and
	$$\sum_{i=1}^k(\Pi^{-1}(V_i),f\circ \varphi,F_n,\delta)^{\omega}<s.$$
	We can take compact subset $A_i\subset V_i$ so that $Y^{'}=A_1\cup\cdots\cup A_k$. Then 
	\begin{align*}
		\sum_{i=1}^k(\pi^{-1}(\phi(A_i)),f,F_n,\epsilon)^{\omega}&\le \sum_{i=1}^k(\Pi^{-1}(A_i),f\circ \varphi,F_n,\delta)^{\omega}\\
		&\le \sum_{i=1}^k(\Pi^{-1}(V_i),f\circ \varphi,F_n,\delta)^{\omega}<s.
	\end{align*}
	Each $\phi(A_i)$ is a closed subset of $Y$(since $Y$ is a Hausdorff space) with $\diam(\phi(A_i),d^{'}_{F_n})<\epsilon$. Then there exist open subsets $W_i\supset \phi(A_i)$ of $Y$ for $1\le i\le k$ such that $\diam(W_i,d^{'}_{F_n})\le\epsilon$ and
	$\sum_{i=1}^k(\pi^{-1}(W_i),f,F_n,\epsilon)^{\omega}<s.$ This shows that
	$P^{\omega}(\pi,f,F_n,\epsilon)<s.$
	Letting $s\to P^{\omega}(\Pi,f\circ\varphi,F_n,\delta)$ and then dividing $|F_n|$, we get 
	$$P^{\omega}(\pi,f,G)\le P^{\omega}(\Pi,f\circ\varphi,G).$$
\end{proof}

\begin{lem}\label{lem 3.4}
	Under the setting of Lemma \ref{lem 3.3}, suppose that  $h_{top}(X,G)<\infty$.  if $\phi:Y^{'}\rightarrow Y$ is a principal factor map,
	\begin{align*}
		\xymatrix{
		(X\times_{Y}Y^\prime,G)\ar[r]^-{\varphi}\ar[d]^{\Pi}&(X,G)\ar[d]^{\pi}\\
		(Y^\prime,G)\ar[r]^-{\phi:principal}&(Y,G)}
	\end{align*}
	then $\varphi$ is  also a principal factor map.
\end{lem}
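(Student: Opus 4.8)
The plan is to reduce the statement to an equality of topological conditional entropies and then apply Theorem \ref{thm 2.7} twice. Since $(Y,G)$ is a factor of $(X,G)$ and topological entropy does not increase under factor maps, $h_{top}(Y,G)\le h_{top}(X,G)<\infty$. Because the base $(X,G)$ of the factor map $\varphi:(X\times_Y Y',G)\to(X,G)$ has finite topological entropy, Theorem \ref{thm 2.7} applies to $\varphi$, so it is enough to prove
$$h_{\text{top}}(G,X\times_Y Y'\mid X)=0.$$

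First I would analyze the fibers of $\varphi$. Fix $x\in X$; by the definition of the fiber product,
$$\varphi^{-1}(x)=\{(x,y):y\in Y',\ \phi(y)=\pi(x)\}=\{x\}\times\phi^{-1}(\pi(x)),$$
so $y\mapsto(x,y)$ is a bijection from $\phi^{-1}(\pi(x))$ onto $\varphi^{-1}(x)$. Writing $\widetilde{d}$ for a metric on $Y'$ and $\rho$ for the metric on $X\times_Y Y'$ of Lemma \ref{lem 3.3}, and using the diagonal action $g(x,y)=(gx,gy)$, for every $F\in\FF(G)$ and all $y_1,y_2\in\phi^{-1}(\pi(x))$ we get
$$\rho_F\big((x,y_1),(x,y_2)\big)=\max_{g\in F}\max\{d(gx,gx),\widetilde{d}(gy_1,gy_2)\}=\widetilde{d}_F(y_1,y_2).$$
Thus $(\varphi^{-1}(x),\rho_F)$ is isometric to $(\phi^{-1}(\pi(x)),\widetilde{d}_F)$, so the relevant covering numbers coincide: $\#(\varphi^{-1}(x),F,\epsilon)=\#(\phi^{-1}(\pi(x)),F,\epsilon)$ for every $\epsilon>0$. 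Since $\pi$ is surjective, $\{\pi(x):x\in X\}=Y$, and hence
$$h_{\text{top}}(G,X\times_Y Y'\mid X)=\lim_{\epsilon\to0}\lim_{n\to\infty}\frac{\sup_{x\in X}\log\#(\varphi^{-1}(x),F_n,\epsilon)}{|F_n|}=h_{\text{top}}(G,Y'\mid Y).$$

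Finally, since $\phi:(Y',G)\to(Y,G)$ is a principal factor map and $h_{top}(Y,G)<\infty$, Theorem \ref{thm 2.7} gives $h_{\text{top}}(G,Y'\mid Y)=0$, so $h_{\text{top}}(G,X\times_Y Y'\mid X)=0$; applying Theorem \ref{thm 2.7} once more to $\varphi$ shows that $\varphi$ is a principal factor map. I expect the only genuinely delicate point to be the fiber identification in the middle step: one must verify that the Bowen metric $\rho_{F_n}$ induced on $\varphi^{-1}(x)$ by the diagonal action depends on a point only through its $Y'$-coordinate (via $\widetilde{d}$), so that $(F_n,\epsilon)$-covers — equivalently $(F_n,\epsilon)$-spanning or $(F_n,\epsilon)$-separated sets — of $\varphi^{-1}(x)$ are in a bijective, diameter-preserving correspondence with those of $\phi^{-1}(\pi(x))$ (up to the harmless distinction between ambient-open and relatively open covering sets, which disappears after dividing by $|F_n|$ and letting $n\to\infty$); the remaining bookkeeping, including checking that the finiteness of topological entropy is in force whenever Theorem \ref{thm 2.7} is invoked, is routine.
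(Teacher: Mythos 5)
Your proposal is correct and follows essentially the same route as the paper: identify each fiber $\varphi^{-1}(x)$ isometrically (with respect to the Bowen metrics $\rho_{F_n}$ and $\widetilde{d}_{F_n}$) with $\phi^{-1}(\pi(x))$, use surjectivity of $\pi$ to equate the suprema of covering numbers, and apply Theorem \ref{thm 2.7} twice. Your explicit check that the relevant base systems ($(X,G)$ for $\varphi$ and $(Y,G)$ for $\phi$) have finite topological entropy is in fact slightly more careful than the paper's write-up.
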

\begin{proof}

	Let $\{F_n\}_{n\ge1}$ be a F\o lner sequence of $G$. For each $x\in X$ and $n\in \mathbb{N}$,  clearly $\Pi$ is an isometric mapping between $(\varphi^{-1}(x),\rho_{F_n})$ and $(\phi^{-1}(\pi(x),\widetilde{d}_{F_n})$. This shows  for any $\epsilon>0$,
	$$\#(\varphi^{-1}(x),F_n,\epsilon)=\#
	(\phi^{-1}(\pi(x)),F_n,\epsilon).$$
Notice that
	$\sup_{x\in X}\#(\phi^{-1}(\pi(x)),F_n,\epsilon)=\sup_{y\in Y}\#(\phi^{-1}(y),F_n,\epsilon).$
	So
	$$\sup_{x\in X}\#(\varphi^{-1}(x),F_n,\epsilon)= \sup_{y\in Y}\#(\phi^{-1}(y),F_n,\epsilon).$$
Since $\phi$ is a principal map and $h_{top}(G,Y)<\infty$, we know that
	$h_{\text{top}}(G,X\times_Y Y^{'}|X)=0$ by  Theorem \ref{thm 2.7}.
	So $\varphi$ is principal by   using Theorem \ref{thm 2.7} again.
	
\end{proof}

\begin{lem}\label{lem 3.5}
	Let $\pi:(X,G)\rightarrow (Y,G)$ be a factor map between two $G$-systems, $h_{top}(X,G)<\infty$ and let $f\in C(X,\mathbb{R})$. Then there exist $G$-systems $(X^{'},G)$, $(Y^{'},G)$ with the factor map $\pi^{'}:(X^{'},G)\rightarrow (Y^{'},G)$ and $f^{'}\in C(X^{'}, \mathbb{R})$ so that \\
	(1) Both $X^{'}$ and $Y^{'}$ are zero dimensional;\\
	(2) For each $0\le\omega\le1$, one has
	$$P^{\omega}(\pi,f,G)\le P^{\omega}(\pi^{'},f^{'},G)$$
	and
	$$P^{\omega}_{\text{var}}(\pi^{'},f^{'},G)
	\le P^{\omega}_{\text{var}}(\pi,f,G).$$
\end{lem}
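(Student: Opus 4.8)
\emph{Overview of the plan.} I would build $(X',G)$ and $(Y',G)$ in two successive extension steps, using the zero-dimensional principal extension theorem (Theorem~\ref{thm 2.8}), the fiber product construction of Lemma~\ref{lem 3.3}, and the comparison Lemmas~\ref{lem 3.2} and \ref{lem 3.4}; the standing hypothesis $h_{top}(X,G)<\infty$ will be used exactly once, to invoke Lemma~\ref{lem 3.4}.

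\emph{Construction.} First apply Theorem~\ref{thm 2.8} to $Y$ to obtain a zero-dimensional compact metric space $Y'$ together with a principal factor map $\phi:(Y',G)\to(Y,G)$. Form the fiber product $Z:=X\times_Y Y'$ with its two coordinate factor maps $\varphi:Z\to X$ and $\Pi:Z\to Y'$, so that $\phi\circ\Pi=\pi\circ\varphi$. Since $h_{top}(X,G)<\infty$ and $\phi$ is principal, Lemma~\ref{lem 3.4} shows that $\varphi:Z\to X$ is principal. Now apply Theorem~\ref{thm 2.8} again, this time to $Z$, to obtain a zero-dimensional compact metric space $X'$ with a principal factor map $\psi:(X',G)\to(Z,G)$. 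Set
$$\pi':=\Pi\circ\psi:(X',G)\to(Y',G),\qquad f':=f\circ\varphi\circ\psi\in C(X',\mathbb{R}).$$
Then $X'$ and $Y'$ are zero-dimensional, which is (1).

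\emph{The two inequalities in (2).} For the topological side, Lemma~\ref{lem 3.3} gives $P^{\omega}(\pi,f,G)\le P^{\omega}(\Pi,f\circ\varphi,G)$, and applying Lemma~\ref{lem 3.2} to the tower $X'\xrightarrow{\psi}Z\xrightarrow{\Pi}Y'$ with the function $f\circ\varphi\in C(Z,\mathbb{R})$ gives $P^{\omega}(\Pi,f\circ\varphi,G)\le P^{\omega}(\Pi\circ\psi,f\circ\varphi\circ\psi,G)=P^{\omega}(\pi',f',G)$; chaining these yields $P^{\omega}(\pi,f,G)\le P^{\omega}(\pi',f',G)$. For the variational side, fix $\mu'\in M(X',G)$ and put $\nu:=\psi_*\mu'\in M(Z,G)$ and $\mu:=\varphi_*\nu=(\varphi\circ\psi)_*\mu'\in M(X,G)$. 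Since $\psi$ and $\varphi$ are principal, Theorem~\ref{thm 2.5} gives $h_{\mu'}(G,X')=h_{\nu}(G,Z)=h_{\mu}(G,X)$; since $\phi$ is principal and $\phi_*(\Pi_*\nu)=\pi_*(\varphi_*\nu)=\pi_*\mu$, Theorem~\ref{thm 2.5} also gives $h_{\pi'_*\mu'}(G,Y')=h_{\Pi_*\nu}(G,Y')=h_{\pi_*\mu}(G,Y)$; and $\int_{X'}f'\,d\mu'=\int_X f\,d\mu$. Hence the expression inside the supremum defining $P^{\omega}_{\text{var}}(\pi',f',G)$ evaluated at $\mu'$ equals the one defining $P^{\omega}_{\text{var}}(\pi,f,G)$ evaluated at $\mu$, so taking the supremum over $\mu'\in M(X',G)$ yields $P^{\omega}_{\text{var}}(\pi',f',G)\le P^{\omega}_{\text{var}}(\pi,f,G)$.

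\emph{Main difficulty.} The argument presents no serious obstacle once the preceding lemmas are granted; the points that genuinely need care are keeping track of which square of factor maps commutes (so that the push-forward measures are identified correctly along $\phi\circ\Pi=\pi\circ\varphi$), and verifying that the principality hypotheses of Theorem~\ref{thm 2.5} and Lemma~\ref{lem 3.4} are in force for \emph{all} three maps $\phi,\varphi,\psi$ — this is precisely where $h_{top}(X,G)<\infty$ is needed, since without it Lemma~\ref{lem 3.4} cannot be applied to conclude that $\varphi:Z\to X$ is principal.
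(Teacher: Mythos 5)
Your proposal is correct and follows essentially the same route as the paper: the same two applications of Theorem~\ref{thm 2.8} (first to $Y$, then to the fiber product), the same use of Lemmas~\ref{lem 3.2}, \ref{lem 3.3} and \ref{lem 3.4} for the topological inequality, and the same entropy-preservation argument via Theorem~\ref{thm 2.5} for the variational inequality (which you merely carry out in one combined step rather than two). The identification of $h_{top}(X,G)<\infty$ as the hypothesis needed solely to make $\varphi$ principal via Lemma~\ref{lem 3.4} matches the paper's usage.
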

\begin{proof}
	By Theorem \ref{thm 2.8}, there exists a zero-dimensional principal extension via factor map
	$\phi:(Y^{'},G)\rightarrow (Y,G)$.
	We define  the fiber product $X\times_Y Y^{'}$, the projections
	$\varphi:X\times_Y Y^{'}\rightarrow X$ and $\Pi:X\times_Y Y^{'}\rightarrow Y^{'}$ as Lemma \ref{lem 3.3}.
\begin{align*}
	\xymatrix{
	(X\times_{Y}Y^\prime,G)\ar[r]^-{\varphi}\ar[d]^{\Pi}&(X,G)\ar[d]^{\pi}\\
	(Y^\prime,G)\ar[r]^-{\phi:principal}&(Y,G)
}
\end{align*}
	Then  $\varphi$ is a principal factor map by Lemma \ref{lem 3.4}.
	
	By Lemma \ref{lem 3.3}, for any $0<\omega<1$
	\begin{align}\label{equ 3.16}
	P^{\omega}(\pi,f,G)\le P^{\omega}(\Pi,f\circ\varphi,G).
	\end{align}
Then by Theorem \ref{thm 2.5}, for every $\mu\in M(X\times_Y Y^{'},G)$, 
	$$h_{\mu}(G,X\times_Y Y^{'})=h_{\varphi_{*}\mu}(X,G)$$
	and
	$$h_{\Pi_{*}\mu}(G,Y^{'})=h_{\phi_{*}(\Pi_{*}\mu)}(G,Y)=
	h_{\pi_{*}(\varphi_{*}\mu)}(G,Y).$$
	Then
	\begin{align}\label{equ 3.17}
	\begin{split}
		&P^{\omega}_{\text{var}}(\Pi,f\circ\varphi,G)\\	
	=&\sup\{\omega h_{\mu}(G,X\times_Y Y^{'})+(1-\omega)h_{\Pi_{*}\mu}(G,Y^{'})\\
	&+\omega\int_{X\times_Y Y^{'}} f\circ \varphi d\mu:{\mu\in M(X\times_Y Y^{'},G)}\}
	\\
	=&\sup\{\omega h_{\varphi_{*}\mu}(G,X )+(1-\omega)h_{\pi_{*}(\varphi_{*}\mu)}(G,Y)\\
	&+\omega\int_{X} fd\varphi_{*}\mu:{\mu\in M(X\times_Y Y^{'},G)}\}
	\\
	\le& P^{\omega}_{\text{var}}(\pi,f,G).
	\end{split}
	\end{align}
	
	Applying Theorem \ref{thm 2.8} again to the system $(X\times_Y Y^{'},G)$,  we get a zero-dimension principal extension via factor map
	$\psi:(X^{'},G)\rightarrow(X\times_Y Y^{'},G).$
\begin{align*}
	\xymatrix{
	(X^\prime,G)\ar[r]^-{\psi}\ar[dr]_
	-{\Pi\circ\psi}&(X\times_{Y}Y^\prime,G)\ar[r]^-{\varphi}\ar[d]_{\Pi}&(X,G)\ar[d]^{\pi}\\
	&(Y^\prime,G)\ar[r]^-{\phi}&(Y,G)
}
\end{align*}
	By Lemma \ref{lem 3.2}, 
	\begin{align}\label{equ 3.18}
		P^{\omega}(\Pi,f\circ\varphi,G)\le P^{\omega}(\Pi\circ\psi,f\circ\varphi\circ\psi,G).
	\end{align}
Moreover, we have 
	\begin{align}\label{equ 3.19}
	\begin{split}
			&P^{\omega}_{\text{var}}(\Pi\circ\psi,f\circ\varphi\circ\psi,
		G)\\
		=&\sup\{\omega h_{\mu}(G,X^{'})+(1-\omega) h_{\Pi_{*}(\psi_{*}\mu)}(G,Y^{'})\\
		&+\omega \int_{X^{'}}f\circ\varphi\circ\psi d\mu:\mu\in M(X^{'},G)\}\\
		=&\sup\{\omega h_{\psi_{*}\mu}(G,X\times_Y Y^{'})+(1-\omega) h_{\Pi_{*}(\psi_{*}\mu)}(G,Y^{'})\\
		&+\omega \int_{X\times_Y Y^{'}}f\circ\varphi d\psi_{*}\mu:\mu\in M(X^{'},G)\}
		\\
		\le& P^{\omega}_{\text{var}}(\Pi,f\circ\varphi,G).
	\end{split}
	\end{align}
	
	It follows from (\ref{equ 3.16}), (\ref{equ 3.18}) that
	$$P^{\omega}(\pi,f,G)\le P^{\omega}(\Pi\circ\psi,f\circ\varphi\circ\psi,G)$$
	and  (\ref{equ 3.17}), (\ref{equ 3.19}) that
	$$P^{\omega}_{\text{var}}(\Pi\circ\psi,f\circ\varphi\circ\psi,G)
	\le P^{\omega}_{\text{var}}(\pi,f,G).$$
	Then  $\pi^{'}=\Pi\circ\psi :(X^{'},G)\rightarrow (Y^{'},G)$
	and $f^{'}=f\circ\varphi\circ\psi:X^{'}\rightarrow\R$ are what we need.
	
\end{proof}

If  $h_{top}(X,G)=\infty$, then the maximal entropy measure  of $X$ is not empty (i.e. there exists $\mu \in M(X,G)$ so that $h_{\mu}(G,X)=\infty$) by variational principle for topological entropy\cite[Theorem 9.48]{kl16}). In this case,  
$$P^{\omega}(\pi,f,G)\leq P^{\omega}_{\text{var}}(\pi,f,G)=\infty$$
 naturally holds.  Hence  we  only need to show\begin{align*}
 	P^{\omega}(\pi,f,G)&\leq\sup_{\mu\in M(X,G)}\{\omega h_{\mu}(G,X)+(1-\omega)h_{\pi\mu}(G,Y)+\omega\int_X fd\mu\}.
 \end{align*} 
  for the case  $h_{top}(X,G)<\infty$. However, by Lemma \ref{lem 3.5}  the proof is reduced to  the zero dimensional case.
\begin{prop}
	Let $\pi:(X,G)\rightarrow (Y,G)$ be the  factor map between two zero dimensional  $G$-systems. Let $f\in C(X,\mathbb{R})$. Then for each $0\le\omega\le1$,
	$$P^{\omega}(\pi,f,G)\le P^{\omega}_{\text{var}}(\pi,f,G).$$
\end{prop}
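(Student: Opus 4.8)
The plan is to carry out a zero-dimensional Misiurewicz-type argument, adapting \cite{mis75,t23} to the amenable setting. Fix $\epsilon>0$ and a F\o lner sequence $\{F_n\}_{n\ge1}$ of $G$. Since $X$ and $Y$ are zero-dimensional, choose finite \emph{clopen} partitions $\mathcal{Q}$ of $Y$ with $\diam(\mathcal{Q},d')<\epsilon$ and $\mathcal{P}$ of $X$ with $\diam(\mathcal{P},d)<\epsilon$ and $\mathcal{P}\succeq\pi^{-1}\mathcal{Q}$. For every $F\in\FF(G)$ the partitions $\mathcal{P}^{F}$ and $\mathcal{Q}^{F}$ are again clopen, $\mathcal{P}^{F}\succeq\pi^{-1}(\mathcal{Q}^{F})$, and their atoms have $d_{F}$-diameter less than $\epsilon$ (and those of $\mathcal{Q}^F$ have $d'_F$-diameter less than $\epsilon$). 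As clopen sets are open, using the nonempty atoms of $\mathcal{Q}^{F}$ as an admissible cover of $Y$ and, for each such atom $Q$, the family $\{P\in\mathcal{P}^{F}:P\subseteq\pi^{-1}(Q)\}$ as an admissible cover of $\pi^{-1}(Q)$, the definitions immediately give, for all $F\in\FF(G)$,
$$P^{\omega}(\pi,f,F,\epsilon)\ \le\ Z(F):=\sum_{Q\in\mathcal{Q}^{F}}\Big(\sum_{P\in\mathcal{P}^{F},\,P\subseteq\pi^{-1}(Q)}e^{\sup_{P}S_{F}f}\Big)^{\omega}.$$

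Next I attach to each $F_n$ an atomic probability measure on $X$ that makes the two applications of Lemma \ref{lem 3.1} in the proof of the lower bound into equalities. Put $Z_{Q}=\sum_{P\subseteq\pi^{-1}(Q)}e^{\sup_{P}S_{F_n}f}$, $q_{Q}=Z_{Q}^{\omega}/Z(F_n)$, and $p_{P}=e^{\sup_{P}S_{F_n}f}/Z_{Q}$ for $P\in\mathcal{P}^{F_n}$ with $P\subseteq\pi^{-1}(Q)$; pick $x_{P}\in P$ with $S_{F_n}f(x_{P})=\sup_{P}S_{F_n}f$ (possible as each clopen $P$ is compact and $f$ continuous), and set $\mu_n=\sum_{Q}q_{Q}\sum_{P\subseteq\pi^{-1}(Q)}p_{P}\,\delta_{x_{P}}$. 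Since exactly one support point lies in each atom of $\mathcal{P}^{F_n}$, one computes $H_{\mu_n}(\mathcal{P}^{F_n})=-\sum_{Q}q_{Q}\log q_{Q}-\sum_{Q}q_{Q}\sum_{P\subseteq\pi^{-1}(Q)}p_{P}\log p_{P}$ and $H_{\pi_{*}\mu_n}(\mathcal{Q}^{F_n})=-\sum_{Q}q_{Q}\log q_{Q}$; since $q$ and $p$ are the maximizers in Lemma \ref{lem 3.1}, the two-step estimate of the lower-bound proof becomes an equality, giving
$$\omega H_{\mu_n}(\mathcal{P}^{F_n})+(1-\omega)H_{\pi_{*}\mu_n}(\mathcal{Q}^{F_n})+\omega\int_{X}S_{F_n}f\,d\mu_n=\log Z(F_n)\ \ge\ \log P^{\omega}(\pi,f,F_n,\epsilon).$$

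Now average and pass to the limit: set $\nu_n=\tfrac{1}{|F_n|}\sum_{g\in F_n}g_{*}\mu_n$ and extract a subsequence $\{n_k\}$ with $\nu_{n_k}\to\mu$ weak$^*$. The F\o lner property gives $\mu\in M(X,G)$, hence also $\pi_{*}\nu_{n_k}\to\pi_{*}\mu\in M(Y,G)$, while $\int_X f\,d\nu_n=\tfrac{1}{|F_n|}\int_X S_{F_n}f\,d\mu_n$ forces $\int_X f\,d\mu=\lim_k\tfrac{1}{|F_{n_k}|}\int_X S_{F_{n_k}}f\,d\mu_{n_k}$. The key step is the amenable analogue of Misiurewicz's lemma: because $\mathcal{P}$, hence each $\mathcal{P}^{F_m}$, is clopen, its topological boundary is empty, so $\lambda\mapsto H_{\lambda}(\mathcal{P}^{F_m})$ is weak$^*$-continuous at $\mu$; combining this with concavity of the entropy functional and an Ornstein--Weiss (quasi-)tiling estimate that bounds $|F_n|^{-1}H_{\mu_n}(\mathcal{P}^{F_n})$ by $|F_m|^{-1}H_{\nu_n}(\mathcal{P}^{F_m})$ up to an error that vanishes as $n\to\infty$ — precisely as in the proof of the variational principle for amenable topological entropy \cite{mp82,st80,m85,kl16} — yields
$$\limsup_{k}\frac{H_{\mu_{n_k}}(\mathcal{P}^{F_{n_k}})}{|F_{n_k}|}\le h_{\mu}(G,\mathcal{P}),\qquad \limsup_{k}\frac{H_{\pi_{*}\mu_{n_k}}(\mathcal{Q}^{F_{n_k}})}{|F_{n_k}|}\le h_{\pi_{*}\mu}(G,\mathcal{Q}),$$
the second by applying the same lemma on $(Y,G)$ with the measures $\pi_{*}\mu_{n_k}$.

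Dividing the identity of the previous paragraph by $|F_{n_k}|$, using subadditivity of $\limsup$ together with these three limits, and recalling (Ornstein--Weiss) that $\lim_n|F_n|^{-1}\log P^{\omega}(\pi,f,F_n,\epsilon)$ is the same along $\{F_{n_k}\}$, one obtains
$$\omega h_{\mu}(G,\mathcal{P})+(1-\omega)h_{\pi_{*}\mu}(G,\mathcal{Q})+\omega\int_X f\,d\mu\ \ge\ \lim_{n\to\infty}\frac{\log P^{\omega}(\pi,f,F_n,\epsilon)}{|F_n|}.$$
Since $h_{\mu}(G,\mathcal{P})\le h_{\mu}(G,X)$, $h_{\pi_{*}\mu}(G,\mathcal{Q})\le h_{\pi_{*}\mu}(G,Y)$ and $\pi_{*}\mu\in M(Y,G)$, the left side is $\le P^{\omega}_{\text{var}}(\pi,f,G)$; letting $\epsilon\to0$ turns the right side into $P^{\omega}(\pi,f,G)$, which is the claim. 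I expect the one real obstacle to be the amenable Misiurewicz estimate quoted above: for $G=\mathbb{Z}$ it drops out of the division algorithm, but for a general countable amenable $G$ it requires the Ornstein--Weiss tiling machinery to replace the window $\mathcal{P}^{F_n}$ by translates of a fixed $\mathcal{P}^{F_m}$ with vanishing relative error; the pointwise algebra building $\mu_n$, the weak$^*$-compactness, and the boundary-free continuity provided by zero-dimensionality are all routine.
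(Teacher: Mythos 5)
Your proposal is correct and follows essentially the same route as the paper: clopen partitions from zero-dimensionality, the partition function $Z_{F_n}$ bounding $P^{\omega}(\pi,f,F_n,\epsilon)$, atomic measures turning Lemma \ref{lem 3.1} into an equality, F\o lner averaging with a weak$^*$ limit, and the empty-boundary continuity of the entropy functionals. The ``amenable Misiurewicz estimate'' you flag as the one real obstacle is exactly what the paper imports as \cite[Lemma 3.1(3)]{hyz11} (an elementary subadditivity-plus-boundary-term bound, no full quasi-tiling needed), so there is no gap.
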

\begin{proof}
 We show  that for each $\epsilon >0$ there exists $\mu\in M(X,G)$ satisfying
	$$\lim_{n\to\infty}\frac{\log P^{\omega}(\pi,f,F_n,\epsilon)}{|F_n|} \leq \omega h_{\mu}(G,X)+(1-\omega)h_{\pi_{*}\mu}(G,Y)+\omega\int_X fd\mu.$$
	
	Let $Y=A_1\cup\cdots\cup A_{\alpha}$ be a clopen partition with $\diam(A_a,d^{'})<\epsilon$ for all $1\le a\le \alpha$. From $\dim X=0$, for each $1\le a\le \alpha$, we can choose a clopen partition so that
	$$\pi^{-1}(A_a)=\cup_{b=1}^{\beta_a}B_{ab}$$
	with $\diam (B_{ab},d)<\epsilon$ for all $1\le b\le \beta_a$. Set $\AA=\{A_1,\cdots,A_{\alpha}\}$ and $\BB=\{B_{ab}:1\le a\le \alpha,1\le b\le \beta_a\}$.  Then  $\BB\succ \pi^{-1}(\AA)$.
	
	Let $\{F_n\}_{n\ge1}$ be a F\o lner sequence of $G$. For each $n\ge1$, $\BB^{F_n}\succ \pi^{-1}(\AA^{F_n})$.  For $\emptyset\neq A\in\AA^{F_n}$, put
	$$\BB^{F_n}_A:=\{B\in \BB^{F_n}:B\cap\pi^{-1}(A)\neq\emptyset\}=\{B\in \BB^{F_n}:B\subset\pi^{-1}(A),B\neq\emptyset\}.$$
    Namely,
	$\pi^{-1}(A)=\cup_{B\in \BB^{F_n}_A}B.$
	We set
	$$Z_{F_n,A}=\sum_{B\in \BB^{F_n}_A}e^{\sup_B S_{F_n}f}$$
   and
	$$Z_{F_n}=\sum_{\emptyset\not= A\in \AA^{F_n}}(Z_{F_n,A})^{\omega}.$$	This shows that $	P^{\omega}(\pi,f,F_n,\epsilon)\le Z_{F_n}.$
   For  each  $\emptyset \not= B\in \BB^{F_n}$, by $\AA^{F_n}(B)$  we denote the unique element of $\AA^{F_n}$ containing $\pi(B)$.  Then for each   $\emptyset \not= A\in \AA^{F_n}$, $\AA^{F_n}(B)=A$ for all $B\in \BB^{F_n}_A$. For each  $\emptyset \not =B\in \BB^{F_n}$, noticing that $B$ is closed,  we take a point $x_B\in B$ satisfying $S_{F_n}f(x_B)=\sup_{x\in B}S_{F_n}f(x).$ Define a probability measure on $X$ by
	\begin{align*}
		\delta_{n}&=\frac{1}{Z_{F_n}}\sum_{B\in \BB^{F_n}}(Z_{F_n,\AA^{F_n}(B)})^{\omega-1}e^{S_{F_n}f(x_B)}
		\cdot\delta_{x_B}\\&=\frac{1}{Z_{F_n}}\sum_{A\in \AA^{F_n}}\sum_{B\in \BB^{F_n}_A}
		(Z_{F_n,A})^{\omega-1}e^{S_{F_n}f(x_B)}\cdot\delta_{x_B},
	\end{align*}
where $\delta_{x}$ is the delta probability measure at the point ${x}$.  We set
	$$\mu_{n}=\frac{1}{|F_n|}\sum_{g\in F_n}\delta_{n}\circ g^{-1}.$$
	 Without loss of generality, we assume that $\mu_n \rightarrow \mu \in M(X,G)$ in the  weak* topology.
  Next, we show 
	$$\lim_{n\to\infty}\frac{\log Z_{F_n}}{|F_n|}\leq \omega h_{\mu}(G,X)+(1-\omega)h_{\pi_{*}\mu}(G,Y)+\omega\int_X fd\mu.$$
	\begin{claim}\label{claim 3.7}
		$$\omega H_{\delta_{n}}(\BB^{F_n})+(1-\omega)H_{\pi_{*}\delta_{n}}
		(\AA^{F_n})+\omega\int_XS_{F_n}fd\delta_{n}=\log Z_{F_n}.$$
	\end{claim}
	\begin{proof}[Proof of {Claim 3.7}]
	Since
		$$\pi_{*}\delta_{n}=\frac{1}{Z_{F_n}}\sum_{B\in\BB^{F_n}}(Z_{F_n,
			\AA^{F_n}(B)})^{\omega-1}e^{S_{F_n}f(x_B)}
		\cdot\delta_{\pi(x_B)},$$
		then for  each  $\emptyset \not =A\in \AA^{F_n}$,
		\begin{align*}
		\pi_{*}\delta_{n}(A)&
		=\frac{1}{Z_{F_n}}\sum_{B\in\BB^{F_n}}(Z_{F_n,
		\AA^{F_n}(B)})^{\omega-1}e^{S_{F_n}f(x_B)}\\
		&=\frac{1}{Z_{F_n}}(Z_{F_n,A})^{\omega}.
		\end{align*}
		Then
		\begin{align}\label{equ 3.20}
			H_{\pi_{*}\delta_{n}}(\AA^{F_n})=\log Z_{F_n}-\omega\sum_{A\in\AA^{F_n}}\frac{(Z_{F_n,A})^{\omega}}
			{Z_{F_n}}\log Z_{F_n,A}.
		\end{align}
		For each  $\emptyset \not= B\in\BB^{F_n}$,
		$$\delta_{n}(B)=\frac{(Z_{F_n,\AA^{F_n}(B)})^{\omega-1}}
		{Z_{F_n}}e^{S_{F_n}f(x_B)}$$
		Then
		\begin{align*}
			H_{\delta_{n}}(\BB^{F_n})&=-\sum_{B\in\BB^{F_n}}\frac
			{(Z_{F_n,\AA^{F_n}(B)})^{\omega-1}}
			{Z_{F_n}}e^{S_{F_n}f(x_B)}\cdot \log \frac{(Z_{F_n,\AA^{F_n}(B)})^{\omega}}
			{Z_{F_n}}e^{S_{F_n}f(x_B)}\\&=\frac{\log Z_{F_n}}{Z_{F_n}}\underbrace{\sum_{B\in\BB^{F_n}}(Z_{F_n,\AA^{F_n}(B)})
				^{\omega-1}e^{S_{F_n}f(x_B)}}_{(I)}\\&-\frac{\omega-1}{Z_{F_n}}
			\underbrace{\sum_{B\in\BB^{F_n}}(Z_{F_n,\AA^{F_n}(B)})
				^{\omega-1}e^{S_{F_n}f(x_B)}\log Z_{F_n,\AA^{F_n}(B)}}_{(II)}\\&-\underbrace{\sum_{B\in\BB^{F_n}}\frac{(Z_{F_n,\AA^{F_n}(B)})
					^{\omega-1}}{Z_{F_n}}e^{S_{F_n}f(x_B)}S_{F_n}f(x_B)}_{(III)}
		\end{align*}
		For the term $(I)$,
		\begin{align*}
			(I)&=\sum_{A\in\AA^{F_n}}\sum_{B\in\BB^{F_n}_A}(Z_{F_n,\AA^{F_n}(B)})
			^{\omega-1}e^{S_{F_n}f(x_B)}\\
			&=\sum_{A\in\AA^{F_n}}(Z_{F_n,A})
			^{\omega-1}Z_{F_n,A}\\
			&=Z_{F_n}.
		\end{align*}
		For the  term $(II)$,
		\begin{align*}
			(II)&=\sum_{A\in\AA^{F_n}}\sum_{B\in\BB^{F_n}_A}(Z_{F_n,A})
			^{\omega-1}e^{S_{F_n}f(x_B)}\log Z_{F_n,A}\\
			&=\sum_{A\in\AA^{F_n}}(Z_{F_n,A})
			^{\omega}\log Z_{F_n,A}.
		\end{align*}
		For the term $(III)$,
		$$\int_XS_{F_n}fd\delta_{n}=\frac{1}{Z_{F_n}}\sum_{B\in\BB^{F_n}}(Z_{F_n,\AA^{F_n}(B)})
		^{\omega-1}e^{S_{F_n}f(x_B)}S_{F_n}f(x_B)=(III).$$
		Thus
		\begin{align}\label{equ 3.21}
		H_{\delta_{n}}(\BB^{F_n})+\int_XS_{F_n}fd\delta_{n}=\log Z_{F_n}-\frac{\omega-1}{Z_{F_n}}\sum_{A\in\AA^{F_n}}{(Z_{F_n,A})^{\omega}}
		\log Z_{F_n,A}.
		\end{align}
	Combining (\ref{equ 3.20}) with (\ref{equ 3.21}), this yields that 
		$$\omega H_{\delta_{n}}(\BB^{F_n})+(1-\omega)H_{\pi_{*}\delta_{n}}
		(\AA^{F_n})+\omega\int_XS_{F_n}fd\delta_{n}=\log Z_{F_n}.$$
	\end{proof}
	By   \cite[Lemma 3.1, (3)]{hyz11}, for any $H\in \FF(G)$, 
	\begin{align}\label{equ 3.22}
	H_{\delta_{n}}(\BB^{F_n})\le \frac{1}{|H|}\sum_{g\in F_n}H_{\delta_{n}\circ g^{-1}}(\BB^H)+|F_n\setminus\{g\in G:H^{-1}g\in F_n\}|\cdot\log\#\BB
\end{align}
	and 
	\begin{align}\label{equ 3.23}
	H_{\pi_{*}\delta_{n}}(\AA^{F_n})\le \frac{1}{|H|}\sum_{g\in F_n}H_{\pi_{*}\delta_{n}\circ g^{-1}}(\AA^{F_n})+|F_n\setminus\{g\in G:H^{-1}g\in F_n\}|\cdot\log\#\AA.
	\end{align}
	Notice that $F_n\setminus\{g\in G:H^{-1}g\in F_n\} \subset \partial_{\widetilde{H}}(F_n)$, where $\widetilde{H}=H\cup\{e_G\}$. Thus by \textbf{Claim} \ref{claim 3.7}, we get
	\begin{align*}
		\frac{1}{|F_n|}\log Z_{F_n}
		&=\frac{\omega}{|F_n|} H_{\delta_{n}}(\BB^{F_n})+\frac{(1-\omega)}{|F_n|}H_{\pi_{*}\delta_{n}}
		(\AA^{F_n})+\frac{\omega}{|F_n|}\int_XS_{F_n}fd\delta_{n}
		\\
		&\le \frac{\omega}{|H|}H_{\mu_{n}}(\BB^{H})+\frac{1-\omega}{|H|}
		H_{\pi_{*}\mu_{n}}(\AA^{H})+\omega \int_X fd\mu_{n}\\
		&+\frac{\omega}{|F_n|}|
	 \partial_{\widetilde{H}}(F_n)|\cdot\log\#\BB+\frac{1-\omega}{|F_n|}|
		\partial_{\widetilde{H}}(F_n)|\cdot\log\#\AA, \text{by (\ref{equ 3.22})  and (\ref{equ 3.23})}.
	\end{align*}
  Since the partitions $\mathcal{A}$ and $\mathcal{B}$ have   empty boundary, letting $n\to\infty$,  we have $$\lim_{n \to \infty}\frac{\omega}{|F_n|}|
	\alpha_{\widetilde{H}}(F_n)|\cdot\log\#\BB+\frac{1-\omega}{|F_n|}|
	\alpha_{\widetilde{H}}(F_n)|\cdot\log\#\AA=0$$
 and hence
	\begin{align*}
		&\lim_{n\to\infty}\frac{\log P^{\omega}(\pi,f,F_n,\epsilon)}{|F_n|}\le\limsup_{n\to\infty}\frac{\log Z_{F_n}}{|F_n|}\\
		\le& \frac{\omega}{|H|}H_{\mu}(\BB^{H})+\frac{1-\omega}{|H|}
		H_{\pi_{*}\mu}(\AA^{H})+\omega \int_X fd\mu.
	\end{align*}
By the arbitrariness of the $H\in\FF(G)$, we complete the proof.	
\end{proof}

\section{Topological pressure determines invariant measures}
In this section, we  show how weighted amenable topological pressure determines the invariant measures and its measure-theoretic entropies.

\begin{prop}\label{prop 4.1}
	Let $\pi:(X,G)\rightarrow (Y,G)$ be a factor map between two $G$-systems. Let $f,h\in C(X,\mathbb{R})$, and $\omega\in (0,1]$. Then  the following  statements hold. 
	\begin{enumerate}
		\item $P^{\omega}(\pi,0,G)=h^\omega_{top}(\pi,G)$.
		\item If $f\leq h$, then  $P^{\omega}(\pi,f,G)\leq P^{\omega}(\pi,h,G)$. Particularly, $h^\omega_{top}(\pi,G)+\omega\inf f\leq P^{\omega}(\pi,f,G)\leq h^\omega_{top}(\pi,G)+\omega\sup f$.
		\item  $P^{\omega}(\pi,\cdot,G)$ is either finite or constantly $\infty$.
		\item If  $P^{\omega}(\pi,\cdot,G)\in \mathbb{R}$ for all  $f \in C(X,\mathbb{R}$, then $|P^{\omega}(\pi,f,G)-P^{\omega}(\pi,h,G)|\leq \omega||f-g||_{\infty}$ and $P^{\omega}(\pi,\cdot,G)$ is convex.
		\item  $P^{\omega}(\pi,f+c,G)=P^{\omega}(\pi,f,G)+\omega c$ for any $c\in \mathbb{R}$.
		\item $P^{\omega}(\pi,f+h\circ g-h,G)=P^{\omega}(\pi,f,G)$.
		\item $P^{\omega}(\pi,f+h,G)\leq P^{\omega}(\pi,f,G)+P^{\omega}(\pi,h,G)$.
	\end{enumerate}
\end{prop}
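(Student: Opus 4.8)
The plan is to obtain (1), (2), (5) and (6) directly from the definition of $P^{\omega}(\pi,f,F,\epsilon)$ by elementary pointwise estimates on $S_Ff$ followed by passage to the limit along a F\o lner sequence $\{F_n\}$; to deduce (3) and the ``particularly'' clause of (2) by combining these; and to treat the convexity in (4) and the subadditivity (7) through the variational principle, Theorem \ref{thm 1.1}. Since every limit in the definition of $P^{\omega}(\pi,f,G)$ is independent of the chosen F\o lner sequence, it suffices to argue with one fixed $\{F_n\}$.

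Statement (1) is the defining equation of $h_{top}^{\omega}(\pi,G)$. For (2): $f\le h$ gives $S_Ff(x)\le S_Fh(x)$ for all $x\in X$ and all $F\in\FF(G)$, hence $\sup_{U}S_Ff\le\sup_{U}S_Fh$ for every open $U$; this forces $P(\Omega,f,F,\epsilon)\le P(\Omega,h,F,\epsilon)$ for all $\Omega$, then $P^{\omega}(\pi,f,F,\epsilon)\le P^{\omega}(\pi,h,F,\epsilon)$, and finally $P^{\omega}(\pi,f,G)\le P^{\omega}(\pi,h,G)$ after taking $\frac1{|F_n|}\log$, $n\to\infty$ and $\epsilon\to0$. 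For (5): $S_F(f+c)=S_Ff+c|F|$ gives $P(\Omega,f+c,F,\epsilon)=e^{c|F|}P(\Omega,f,F,\epsilon)$, hence $P^{\omega}(\pi,f+c,F,\epsilon)=e^{\omega c|F|}P^{\omega}(\pi,f,F,\epsilon)$, and the same limit procedure yields $P^{\omega}(\pi,f+c,G)=P^{\omega}(\pi,f,G)+\omega c$. For (6), read for an arbitrary fixed $g\in G$ with $(h\circ g)(x):=h(gx)$: one computes $S_F(h\circ g-h)=S_{gF}h-S_Fh$, so $\sup_{x\in X}|S_F(h\circ g-h)(x)|\le|gF\triangle F|\cdot\|h\|_\infty$; consequently $P^{\omega}(\pi,f+h\circ g-h,F_n,\epsilon)$ and $P^{\omega}(\pi,f,F_n,\epsilon)$ differ by at most the factor $e^{\omega|gF_n\triangle F_n|\,\|h\|_\infty}$, and since $|gF_n\triangle F_n|/|F_n|\to0$ the two limits agree for each $\epsilon$, hence also after $\epsilon\to0$.

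Combining (2), (5) and (1) with $\inf f\le f\le\sup f$ gives the ``particularly'' clause of (2), namely $h_{top}^{\omega}(\pi,G)+\omega\inf f\le P^{\omega}(\pi,f,G)\le h_{top}^{\omega}(\pi,G)+\omega\sup f$; from these bounds (3) is immediate, since $P^{\omega}(\pi,\cdot,G)$ is finite for some (equivalently every) $f$ precisely when $h_{top}^{\omega}(\pi,G)<\infty$, and equals $+\infty$ identically otherwise. For (4), applying (2) to $f\le h+\|f-h\|_\infty$ and $h\le f+\|f-h\|_\infty$ together with (5) gives $|P^{\omega}(\pi,f,G)-P^{\omega}(\pi,h,G)|\le\omega\|f-h\|_\infty$; convexity follows either by applying H\"older's inequality directly to the finite sums defining $P(\Omega,\cdot,F,\epsilon)$ and $P^{\omega}(\pi,\cdot,F,\epsilon)$ (using $S_F(tf+(1-t)h)=tS_Ff+(1-t)S_Fh$), or from Theorem \ref{thm 1.1}, since $\int(tf+(1-t)h)\,d\mu=t\int f\,d\mu+(1-t)\int h\,d\mu$ splits the supremand. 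For (7): if $h_{top}^{\omega}(\pi,G)=\infty$ both sides are $+\infty$ by (3); otherwise, using Theorem \ref{thm 1.1} together with the non-negativity of $h_\mu(G,X)$ and $h_{\pi_*\mu}(G,Y)$, for every $\mu\in M(X,G)$ the quantity $\omega h_\mu(G,X)+(1-\omega)h_{\pi_*\mu}(G,Y)+\omega\int_X(f+h)\,d\mu$ is bounded above by the sum over $\psi\in\{f,h\}$ of $\omega h_\mu(G,X)+(1-\omega)h_{\pi_*\mu}(G,Y)+\omega\int_X\psi\,d\mu$, and each summand is $\le P^{\omega}(\pi,\psi,G)$; taking the supremum over $\mu$ finishes.

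The main point requiring care is bookkeeping of the logical order: (5) must be in hand before it is used in the addendum to (2), in (3) and in the Lipschitz estimate of (4), and the cases $h_{top}^{\omega}(\pi,G)=\infty$ in (3), (4) and (7) must be dispatched separately before invoking Theorem \ref{thm 1.1}. The only non-formal ingredient is the F\o lner decay $|gF_n\triangle F_n|/|F_n|\to0$ used in (6); everything else is routine manipulation of the defining quantities or a one-line consequence of the variational principle.
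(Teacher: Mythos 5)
Your proposal is correct, but it takes a genuinely different route from the paper: the paper disposes of all seven items in one line by invoking the variational principle (Theorem \ref{thm 1.1}), whereas you prove (1), (2), (5), (6), the ``particularly'' clause, (3) and the Lipschitz half of (4) directly from the defining quantities $P(\Omega,f,F,\epsilon)$ and $P^{\omega}(\pi,f,F,\epsilon)$, reserving Theorem \ref{thm 1.1} only for convexity and for (7). This is a real trade-off: your definition-level arguments are self-contained (they would survive even without the variational principle and make explicit where amenability enters, namely the F\o lner decay $|gF_n\triangle F_n|/|F_n|\to 0$ in (6)), while the paper's route is shorter but leans entirely on the main theorem. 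One caveat on your first suggested route for convexity in (4): applying H\"older to the finite sums gives, for a \emph{fixed} cover $\mathcal{U}$, a bound by $\bigl(\sum_{\mathcal{U}}e^{\sup S_Ff}\bigr)^{t}\bigl(\sum_{\mathcal{U}}e^{\sup S_Fh}\bigr)^{1-t}$, but the subsequent infimum over covers does not factor --- $\inf_{\mathcal{U}}A(\mathcal{U})^{t}B(\mathcal{U})^{1-t}$ can strictly exceed $(\inf A)^{t}(\inf B)^{1-t}$, since the near-optimal covers for $f$ and for $h$ need not coincide, and the same obstruction recurs at the outer infimum over covers of $Y$. So that branch does not close as stated; your alternative via Theorem \ref{thm 1.1} (splitting $\int(tf+(1-t)h)\,d\mu$ inside the supremand) is the one to keep, and with it the whole argument is sound. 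The remaining steps --- monotonicity of $x\mapsto x^{\omega}$ for $\omega\in(0,1]$ in (2), the identity $P^{\omega}(\pi,f+c,F,\epsilon)=e^{\omega c|F|}P^{\omega}(\pi,f,F,\epsilon)$ in (5), the nonnegativity of $h^{\omega}_{\mu}(\pi,G)$ used in (7), and the separate dispatch of the $h^{\omega}_{top}(\pi,G)=\infty$ case --- are all handled correctly and in the right logical order.
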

\begin{proof}
	It can be proved by using  the variational principle for weighted amenable topological pressure established in Theorem  \ref{thm 1.1}.  
\end{proof}

 For simplifying the notations, we call   $h^{\omega}_{\mu}(\pi,G):=\omega h_{\mu}(G,X)+(1-\omega)h_{\pi_{*}\mu}(G,Y)$ \emph{weighted  amenable measure-theoretic entropy}.  Recall that a finite signed measure on $X$ is a set function $\mu:\mathcal{B}(X)\rightarrow \mathbb{R}$ which is countably additive.

 \begin{thm}
 Let $\pi:(X,G)\rightarrow (Y,G)$ be a factor   map between two $G$-systems with $h_{top}^{\omega}(\pi,G)<\infty$. Let $\omega \in (0,1]$ and  $\mu:\mathcal{B}(X)\rightarrow \mathbb{R}$ be a finite signed measure. Then $\mu\in M(X,G)$ if and only if  $\omega\int f d\mu\leq P^{\omega}(\pi,f,G)$ for all $ f\in C(X,\mathbb{R}).$	
 \end{thm}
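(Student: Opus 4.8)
The plan is to adapt the classical pressure-theoretic characterization of invariant measures (in the spirit of Walters) to the weighted amenable framework, using the variational principle of Theorem \ref{thm 1.1} together with the elementary properties of $P^{\omega}(\pi,\cdot,G)$ collected in Proposition \ref{prop 4.1}. First note that the assumption $h_{top}^{\omega}(\pi,G)<\infty$ makes $P^{\omega}(\pi,\cdot,G)$ real-valued on all of $C(X,\mathbb{R})$: since $P^{\omega}(\pi,0,G)=h_{top}^{\omega}(\pi,G)$ by Proposition \ref{prop 4.1}(1), Proposition \ref{prop 4.1}(2) gives $h_{top}^{\omega}(\pi,G)+\omega\inf f\le P^{\omega}(\pi,f,G)\le h_{top}^{\omega}(\pi,G)+\omega\sup f$ for every $f$, and this finiteness is what legitimizes the limiting arguments below. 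The necessity is then immediate: if $\mu\in M(X,G)$, then since $h_{\mu}(G,X)\ge0$, $h_{\pi_{*}\mu}(G,Y)\ge0$, $\omega>0$ and $1-\omega\ge0$, Theorem \ref{thm 1.1} yields $\omega\int f\,d\mu\le\omega h_{\mu}(G,X)+(1-\omega)h_{\pi_{*}\mu}(G,Y)+\omega\int f\,d\mu\le P^{\omega}(\pi,f,G)$.

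For the sufficiency, assume $\omega\int f\,d\mu\le P^{\omega}(\pi,f,G)$ for every $f\in C(X,\mathbb{R})$, and establish in turn that $\mu$ is a positive measure, that $\mu(X)=1$, and that $\mu$ is $G$-invariant. \textbf{Positivity.} If $\mu$ were not a positive measure, a standard regularity argument (Hahn decomposition of $\mu$, inner and outer regularity of finite Borel measures on the compact metric space $X$, and Urysohn's lemma) produces $f\in C(X,\mathbb{R})$ with $f\ge0$ and $\int f\,d\mu<0$; applying the hypothesis to $-nf$ and using $-nf\le0$ together with Proposition \ref{prop 4.1}(1)--(2) gives $-n\omega\int f\,d\mu\le P^{\omega}(\pi,-nf,G)\le h_{top}^{\omega}(\pi,G)$, whose left-hand side tends to $+\infty$ as $n\to\infty$, a contradiction. \textbf{Total mass.} Applying the hypothesis to a constant function $c$ and using Proposition \ref{prop 4.1}(1),(5), we get $\omega c\,\mu(X)\le P^{\omega}(\pi,c,G)=h_{top}^{\omega}(\pi,G)+\omega c$; dividing by $\omega c>0$ and letting $c\to+\infty$ gives $\mu(X)\le1$, while dividing by $\omega c<0$ and letting $c\to-\infty$ gives $\mu(X)\ge1$. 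Hence $\mu$ is a Borel probability measure. \textbf{Invariance.} Fix $h\in C(X,\mathbb{R})$ and $g\in G$ and set $\varphi=h\circ g-h$. Iterating Proposition \ref{prop 4.1}(6) (with the same $h,g$ and $f=(n-1)\varphi$ at each step) gives $P^{\omega}(\pi,n\varphi,G)=P^{\omega}(\pi,0,G)=h_{top}^{\omega}(\pi,G)$ for every $n\ge1$, so applying the hypothesis to $n\varphi$ yields $n\omega\int\varphi\,d\mu\le h_{top}^{\omega}(\pi,G)$; dividing by $n\omega$ and letting $n\to\infty$ gives $\int\varphi\,d\mu\le0$, and replacing $h$ by $-h$ gives $\int\varphi\,d\mu\ge0$, so $\int h\circ g\,d\mu=\int h\,d\mu$. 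Since $h\in C(X,\mathbb{R})$ and $g\in G$ were arbitrary and a Borel probability measure on the compact metric space $X$ is determined by its integrals against continuous functions, $\mu$ is $G$-invariant, i.e.\ $\mu\in M(X,G)$.

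The one place that needs a little care is the reduction, inside the positivity step, from ``$\mu$ is not a positive measure'' to the existence of a nonnegative continuous function of strictly negative $\mu$-integral; this is a routine consequence of the Riesz representation theorem (equivalently of Hahn decomposition together with regularity and Urysohn's lemma) and I would present it in a sentence or two rather than in full. The hypothesis $\omega>0$ is used essentially, to divide by $\omega$ (or $n\omega$) in each step, and $h_{top}^{\omega}(\pi,G)<\infty$ is used essentially, to keep the bounds on $P^{\omega}(\pi,\cdot,G)$ finite; apart from these, every step is a direct application of Theorem \ref{thm 1.1} and Proposition \ref{prop 4.1}, so I do not anticipate a genuine obstacle.
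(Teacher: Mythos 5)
Your proposal is correct and follows essentially the same route as the paper's own proof: necessity from the variational principle, then positivity, total mass, and $G$-invariance extracted in turn by testing the inequality against $-nf$ (for $f\ge 0$), constants, and coboundaries $n(h\circ g-h)$, using the bounds $h_{top}^{\omega}(\pi,G)+\omega\inf f\le P^{\omega}(\pi,f,G)\le h_{top}^{\omega}(\pi,G)+\omega\sup f$ and the coboundary invariance of $P^{\omega}(\pi,\cdot,G)$. The only differences are cosmetic (contradiction versus the paper's direct $f+\epsilon$ argument for positivity, and replacing $h$ by $-h$ versus taking $n$ negative), so no further changes are needed.
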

\begin{proof}
	If $\mu\in M(X,G)$, then one has  $\omega\int f d\mu\leq P^{\omega}(\pi,f,G)$ by Theorem \ref{thm 1.1}.
	
	Assume that   $\mu$ is a finite signed measure so that  $\omega\int f d\mu\leq P^{\omega}(\pi,f,G)$ for all $f\in C(X,\mathbb{R})$. Let  $f\geq 0$. For $\epsilon>0$ and $n>0$,  we have 
\begin{align*}
	\omega\int n(f+\epsilon)d\mu&=-\omega\int -n(f+\epsilon)d\mu\geq -P^{\omega}(\pi,-n(f+\epsilon),G)\\
	&\geq -\left\lbrace h_{top}^{\omega}(\pi,G)+\omega \sup(-n(f+\epsilon))\right\rbrace\\ &=-h_{top}^{\omega}(\pi,G)+\omega n\inf(f+\epsilon)>0
\end{align*}
for sufficiently  large $n$. This implies that
 $\int (f+\epsilon)d\mu>0$ and hence  $\int f d\mu\geq 0$. Hence, $\mu$ is a measure.  If $n\in \mathbb{Z}$,  then $\omega\int n d\mu\leq P^{\omega}(\pi,n,G)=h_{top}^{\omega}(\pi,G)+\omega n$. One  has $\mu(X)\leq 1+\frac{h_{top}^{\omega}(\pi,G)}{\omega n}$ if $n>0$ and hence $\mu(X)\leq 1$, and $\mu(X)\geq 1+\frac{h_{top}^{\omega}(\pi,G)}{\omega n}$	if $n<0$ and hence $\mu(X)\geq 1$. This shows $\mu \in M(X)$.  If $n\in \mathbb{Z}$	and $f\in C(X,\mathbb{R})$, then $\omega n\int (f\circ g-f)d\mu\leq P^{\omega}(\pi,n(f\circ g-f),G)=h_{top}^{\omega}(\pi,G)$   for all $ g\in G$. If $n\geq1$,  then  $\int (f\circ g-f)d\mu\leq0$. If $n\leq-1$,  then  $\int (f\circ g-f)d\mu\geq0$. This  implies that $\mu\in M(X,G)$.

\end{proof}	

The following lemma  is the well-known separation theorem of convex sets  given in \cite{ds58}.

\begin{lem}\label{lem 4.3}
	If $K_1,K_2$ are disjoint closed convex subsets of a locally convex linear
	topological space $V$ and  $K_1$ is compact, then there exists a continuous real-valued linear functional $F$ on $V$ such that $F(x)<F(y)$ for all $x\in K_1,y\in K_2$.
\end{lem}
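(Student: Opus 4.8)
The plan is to deduce the statement from the classical geometric form of the Hahn--Banach theorem, so that the only real work is to manufacture, from the hypotheses, an \emph{open} convex set that contains $K_1$ and still misses $K_2$. First I would use that $K_2$ is closed and $K_1\cap K_2=\emptyset$: for each $x\in K_1$ the point $x$ lies outside the closed set $K_2$, so there is an open neighbourhood $W_x$ of $0$ with $(x+W_x)\cap K_2=\emptyset$, and by local convexity together with continuity of addition we may pick a convex open neighbourhood $V_x$ of $0$ with $V_x+V_x\subset W_x$, hence $(x+V_x+V_x)\cap K_2=\emptyset$. The family $\{x+V_x:x\in K_1\}$ is an open cover of the compact set $K_1$, so finitely many members $x_1+V_{x_1},\dots,x_m+V_{x_m}$ already cover $K_1$. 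Setting $U=\bigcap_{j=1}^m V_{x_j}$, a convex open neighbourhood of $0$, one checks that $K_1+U\subset\bigcup_{j=1}^m(x_j+V_{x_j}+V_{x_j})$, which is disjoint from $K_2$; moreover $K_1+U$ is open (a union of translates of $U$) and convex (a sum of two convex sets).

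Next I would invoke the standard separation theorem for a disjoint pair consisting of an open convex set and a convex set in a topological vector space: there exist a nonzero continuous real linear functional $F$ on $V$ and a constant $c\in\R$ with $F(a)<c\le F(b)$ for all $a\in K_1+U$ and all $b\in K_2$. Since $K_1\subset K_1+U$, this gives $F(x)<c\le F(y)$ for every $x\in K_1$ and every $y\in K_2$, whence $F(x)<F(y)$, which is exactly the asserted strict separation. Note that compactness of $K_1$ enters only in the first step, to pass from the pointwise neighbourhoods $V_x$ to a single $U$; once $K_1$ is enclosed in the open set $K_1+U$ on which $F$ stays below $c$, the strict inequality on the $K_1$-side is automatic.

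There is no genuine obstacle here, since this is precisely the classical strict-separation theorem in locally convex spaces; indeed one may bypass the argument above entirely and simply cite it, as the paper does with \cite{ds58}. The one place where local convexity (rather than a general topological vector space) is used is the construction of the convex neighbourhood $U$ in the first step --- in an arbitrary topological vector space such a $U$ need not exist --- which is exactly why that hypothesis appears in the statement. Accordingly, the shortest acceptable write-up is a single sentence applying \cite{ds58} to $K_1$ and $K_2$; the paragraphs above merely spell out why that reference yields what is claimed.
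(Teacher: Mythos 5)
Your proposal is correct: the paper itself offers no proof and simply cites the classical strict separation theorem from \cite{ds58}, exactly as you note in your final paragraph. The argument you spell out (enclosing the compact set $K_1$ in the open convex set $K_1+U$ disjoint from $K_2$ via local convexity, then applying the geometric Hahn--Banach theorem) is the standard proof of that cited result and is sound.
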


	\begin{thm}\label{th4.4}
	Let  $\pi:(X,G)\rightarrow (Y,G)$ be a factor map between two $G$-systems with $h_{top}^{\omega}(\pi,G)<\infty$
    and  $\mu_0\in M(X,G)$. Then $$h^{\omega}_{\mu_0}(\pi,G) =\inf\lbrace P^{\omega}(\pi,f,G)-\omega\int f d\mu_0: f\in C(X,\mathbb{R})\rbrace $$ if and only if the entropy map $\mu\in M(X,G)\mapsto h^{\omega}_{\mu}(\pi,G)$ is upper semi-continuous at $\mu_0$.
\end{thm}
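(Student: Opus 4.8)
The plan is to follow the classical Walters-type argument relating upper semi-continuity of the entropy map at a point to the "achievability" of the Legendre-type infimum there, adapted to the weighted setting via Theorem \ref{thm 1.1}. Throughout write $Q(f):=P^{\omega}(\pi,f,G)$ and $E(\mu):=h^{\omega}_{\mu}(\pi,G)=\omega h_{\mu}(G,X)+(1-\omega)h_{\pi_*\mu}(G,Y)$, so that Theorem \ref{thm 1.1} reads $Q(f)=\sup_{\mu\in M(X,G)}\{E(\mu)+\omega\int f\,d\mu\}$, and note that by Proposition \ref{prop 4.1}(4) the functional $Q$ is real-valued, convex and $\omega$-Lipschitz on $C(X,\mathbb R)$ since $h_{top}^{\omega}(\pi,G)<\infty$. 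For one implication, assume the entropy map is upper semi-continuous at $\mu_0$. The inequality $E(\mu_0)\le Q(f)-\omega\int f\,d\mu_0$ for every $f$ is immediate from Theorem \ref{thm 1.1}, so only the reverse needs work: I must produce, for each $\delta>0$, a function $f$ with $Q(f)-\omega\int f\,d\mu_0 < E(\mu_0)+\delta$. Here is where the separation theorem (Lemma \ref{lem 4.3}) enters: consider in the locally convex space $V=C(X,\mathbb R)\times\mathbb R$ (or, dually, work with measures) the compact convex set $K_1=\{(\mu_0,E(\mu_0)+\delta)\}$ — viewing $\mu_0$ as a point of the compact convex set $M(X,G)\subset C(X,\mathbb R)^{*}$ with the weak$^*$ topology — and the closed convex set $K_2=\{(\mu,t): \mu\in M(X,G),\ t\le E(\mu)\}$, the "subgraph" of the entropy map; upper semi-continuity at $\mu_0$ together with concavity of $E$ (which follows from Theorem \ref{thm 1.1}, $E$ being an infimum of affine functions, after subtracting $\omega\int f\,d\mu$) is exactly what is needed to make this subgraph closed and to keep the point outside it. Applying Lemma \ref{lem 4.3} yields a continuous linear functional on $C(X,\mathbb R)^{*}\times\mathbb R$, i.e. a pair $(f,c)$ with $c\ne0$, separating them; rescaling and using that $K_2$ contains points with $t\to-\infty$ forces $c>0$, and after normalization one reads off $E(\mu)+\omega\int (f/c)\,d\mu < E(\mu_0)+\delta + \omega\int(f/c)\,d\mu_0$ for all $\mu\in M(X,G)$, which upon taking the supremum over $\mu$ and invoking Theorem \ref{thm 1.1} gives $Q(f/c)-\omega\int (f/c)\,d\mu_0\le E(\mu_0)+\delta$, as required.

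For the converse, assume $E(\mu_0)=\inf_{f}\{Q(f)-\omega\int f\,d\mu_0\}$; I must show $E$ is upper semi-continuous at $\mu_0$, i.e. for every $\delta>0$ there is a weak$^*$ neighbourhood $N$ of $\mu_0$ with $E(\mu)<E(\mu_0)+\delta$ for all $\mu\in M(X,G)\cap N$. By hypothesis choose $f\in C(X,\mathbb R)$ with $Q(f)-\omega\int f\,d\mu_0 < E(\mu_0)+\delta/2$. For any $\mu\in M(X,G)$, Theorem \ref{thm 1.1} gives $E(\mu)\le Q(f)-\omega\int f\,d\mu < E(\mu_0)+\delta/2+\omega\big(\int f\,d\mu_0-\int f\,d\mu\big)$, and the neighbourhood $N=\{\mu: |\int f\,d\mu-\int f\,d\mu_0|<\delta/(2\omega)\}$ (a basic weak$^*$ neighbourhood, as $\omega>0$) does the job. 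This direction is the easy one and is essentially a one-line computation once the defining infimum is unwound.

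The main obstacle is the first implication, and specifically the verification that the set $K_2$ (the subgraph of the entropy map over $M(X,G)$) is genuinely closed in the product topology — this is where upper semi-continuity of $\mu\mapsto E(\mu)$ at $\mu_0$ must be used in tandem with the concavity of $E$ and the compactness of $M(X,G)$; one has to be slightly careful because the entropy map need not be upper semi-continuous globally, so the closedness of $K_2$ is not automatic, and the separation argument must be localized (e.g. intersecting with a suitable weak$^*$-closed neighbourhood of $\mu_0$ on which the relevant control holds, or by a direct argument that only the point $\mu_0$ matters). Once the convex-geometry setup is correct, the remaining steps — extracting the separating functional, fixing its sign via the unboundedness of $K_2$ below, normalizing, and converting the separation inequality back into a statement about $Q$ through the variational principle — are routine. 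I would also remark that finiteness of $h_{top}^{\omega}(\pi,G)$ is used twice: to guarantee $Q$ is real-valued (hence the infimum defining the right-hand side is $>-\infty$ and Lemma \ref{lem 4.3} applies in a genuine locally convex space) and to ensure $E(\mu)<\infty$ for all $\mu$, so that the subgraph $K_2$ is a bona fide subset of $M(X,G)\times\mathbb R$ with no vertical lines.
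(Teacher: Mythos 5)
Your proposal follows the same route as the paper's proof: the ``infimum $\Rightarrow$ u.s.c.'' direction is the identical one-line computation with the neighbourhood $\{\mu: |\int f\,d\mu-\int f\,d\mu_0|<\delta/(2\omega)\}$, and the ``u.s.c. $\Rightarrow$ infimum'' direction is the same convex-separation argument in $C(X,\mathbb{R})^{*}\times\mathbb{R}$ via Lemma \ref{lem 4.3}, followed by determining the sign of the $\mathbb{R}$-component, normalizing, and converting back through Theorem \ref{thm 1.1}. So the architecture is right, and your variant of fixing the sign via the downward-unbounded vertical rays of $K_2$ is a harmless substitute for the paper's use of the point $(\mu_0,E(\mu_0))$.

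The one point you flag as ``the main obstacle'' and leave unresolved --- the closedness of the subgraph $K_2$ --- has a standard fix, and it is exactly what the paper does: you do not need $K_2$ itself to be closed. Replace it by its closure $\overline{K_2}$, which is still convex; Lemma \ref{lem 4.3} only asks for a closed convex set disjoint from a compact convex one, and upper semi-continuity of $E$ at the \emph{single} point $\mu_0$ is used precisely (and only) to verify $(\mu_0,E(\mu_0)+\delta)\notin\overline{K_2}$: if a net $(\mu_i,t_i)\in K_2$ converged to $(\mu_0,t)$, then $t\le\limsup_i E(\mu_i)\le E(\mu_0)<E(\mu_0)+\delta$. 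No global upper semi-continuity and no localization of the separation argument is needed. A second, smaller repair: the concavity (in fact affineness) of $E$ does \emph{not} follow from Theorem \ref{thm 1.1} as you claim --- a supremum formula $Q(f)=\sup_{\mu}\{E(\mu)+\omega\int f\,d\mu\}$ places no constraint on the convexity properties of $E$ itself. You need the standard fact that $\mu\mapsto h_\mu(G,X)$ is affine (the paper cites Walters, Theorem 8.1), whence $E=\omega h_\mu+(1-\omega)h_{\pi_*\mu}$ is affine and $K_2$, hence $\overline{K_2}$, is convex. With these two points patched, your argument coincides with the paper's.
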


\begin{proof}
	Suppose  that $h^{\omega}_{\mu_0}(\pi,G) =\inf\lbrace P^{\omega}(\pi,f,G)-\omega\int f d\mu_0|f\in C(X,\mathbb{R})\rbrace $. Given  $\varepsilon>0$, choose $h\in C(X,\mathbb{R})$ such that $P^{\omega}(\pi,h,G)-\omega\int h d\mu_0 < h^{\omega}_{\mu_0}(\pi,G) +\frac{\varepsilon}{2}$. Let $V_{\mu_0}(h,\frac{\varepsilon}{2})=\{\mu\in M(X,G): |\int hd\mu -\int h d\mu_0|\leq \frac{\epsilon}{2\omega}\}$.  If $\mu \in V_{\mu_0}(h,\frac{\varepsilon}{2})$,  then 
\begin{align*}
	h^{\omega}_{\mu}(\pi,G)&\leq P^{\omega}(\pi,h,G)-\omega\int h d\mu,\text{by Theorem \ref{thm 1.1}}\\
	&<P^{\omega}(\pi,h,G)-\omega\int h d\mu_0 +\frac{\varepsilon}{2}\\
	&<h^{\omega}_{\mu_0}(\pi,G)+\varepsilon,
\end{align*}
which shows that  the entropy map is upper semi-continuous at $\mu_0$.

Assume that  the entropy map is upper semi-continuous at $\mu_0$. Following \cite[Theorem 8.1]{w82}, one  has $h_{\mu}(G,X)$ is affine in $\mu$ and so is $h^{\omega}_{\mu}(\pi,G)$.  By Theorem \ref{thm 1.1},  we have $$h^{\omega}_{\mu_0}(\pi,G) \leq\inf\lbrace P^{\omega}(\pi,f,G)-\omega\int f d\mu_0: f\in C(X,\mathbb{R})\rbrace.$$  Let $b>h^{\omega}_{\mu_0}(\pi,G)$ and let $C=\lbrace (\mu,t)\in M(X,G)\times\mathbb{R}: 0\leq t\leq h^{\omega}_{\mu}(\pi,G)\rbrace$. Then $C$ is a convex set since the entropy function $\mu\in M(X,G)\mapsto h^{\omega}_{\mu}(\pi,G)$ is affine. Regard  $C$ as a subset of $C(X,\mathbb{R})^\ast\times\mathbb{R}$, where  $C(X,\mathbb{R})^\ast$ is endowed with weak star topology. Then  $(\mu_0,b)\notin \overline{C}$ by the upper semi-continuity of the entropy map at $\mu_0$. Applying Lemma \ref{lem 4.3}  to the disjoint convex sets $\overline{C}$ and $(\mu_0,b)$,  there is a continuous linear functional $F:C(X,\mathbb{R})^\ast\times\mathbb{R}\rightarrow\mathbb{R}$ such that $F((\mu,t))<F((\mu_0,b))$ for all $(\mu,t)\in \overline{C}$. We know that $F$ must have  the form $F(\mu,t)=\omega\int f d\mu +td$ for some $f\in C(X,\mathbb{R})$ and some $d\in \mathbb{R}$. So  $\omega\int f d\mu_0+dh^w_{\mu_0}(\pi,T)<\omega\int f d\mu_0+db$. This shows that  $d>0$. Hence 
\begin{align*}
	h^\omega_{\mu}(\pi,G)+\omega\int\frac{f}{d} d\mu <b +\omega\int\frac{f}{d} d\mu_0
\end{align*}
for all  $\mu\in M(X,G)$. By Theorem \ref{thm 1.1}, we have $	P^{\omega}(\pi,\frac{f}{d},G)<b+ \omega\int\frac{f}{d} d\mu_0.$ This gives us 
\begin{align*}
	b>P^{\omega}(\pi,\frac{f}{d},G)-\omega\int\frac{f}{d} d\mu_0\geq \inf\lbrace P^{\omega}(\pi,h,G)-\omega\int h d\mu_0|h\in C(X,\mathbb{R})\rbrace.
\end{align*}
This competes the proof.	
	
\end{proof}

\section{Equilibrium states}
The classical variational principle  for topological entropy  provides a natural way  of selecting  the maximal entropy measures whose measure-theoretic entropy is equal to topological entropy. In  statistical mechanics, a corresponding role called thermodynamic equilibrium states is considered to 
characterize  the minimum of a thermodynamic potential(such as pressure, temperature). In  this section, we   gives some
elementary properties  and characterizations about the   equilibrium states  of weighted amenable topological pressure.

\begin{df}
	Let  $\pi:(X,G)\rightarrow (Y,G)$ be a factor map between two $G$-systems, $\omega \in (0,1]$ and  $f\in C(X,\mathbb{R})$.
	A measure $\mu\in M(X,G)$ is called an equilibrium state for $f$ if $$ P^{\omega}(\pi,f,G)=h^\omega_{\mu}(\pi,G)+\omega\int f d\mu.$$ Let $M_{f}(X,G)$ denote the collection of all equilibrium states for $f$.
\end{df}
  
\begin{thm}
Let  $\pi:(X,G)\rightarrow (Y,G)$ be a factor map between two $G$-systems, $\omega \in (0,1]$ and  $f\in C(X,\mathbb{R})$. Then 
\begin{enumerate}
	\item $M_{f}(X,G)$ is convex.
	\item If $h^\omega_{top}(\pi,G)<\infty$, then  the extreme points of $M_{f}(X,G)$ are precisely the ergodic members of $M_{f}(X,G)$.
	\item If $h^\omega_{top}(\pi,G)<\infty$ and $M_{f}(X,G)\neq\emptyset$,  then $M_{f}(X,G)$ contains an ergodic measure.
	\item If the entropy map is upper semi-continuous,  then $M_{f}(X,G)$ is compact and non-empty.
	\item If $f,h\in C(X,\mathbb{R})$ and if there exists $c\in \mathbb{R}$ such that $f-h-c$ belongs to the closure of the set $\left\lbrace b\circ g-b|b\in C(X,\mathbb{R}),g\in G\right\rbrace $ in $C(X,\mathbb{R})$, then $M_f(X,G)=M_h(X,G)$.
\end{enumerate}
\end{thm}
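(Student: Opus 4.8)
The plan is to deduce all five assertions from the variational principle (Theorem~\ref{thm 1.1}), the functional properties of $P^{\omega}(\pi,\cdot,G)$ in Proposition~\ref{prop 4.1}, affineness of the weighted amenable entropy map, and the ergodic decomposition for amenable actions. Write $q(\mu)=h^{\omega}_{\mu}(\pi,G)+\omega\int f\,d\mu$, so that by Theorem~\ref{thm 1.1} one has $P^{\omega}(\pi,f,G)=\sup_{\mu\in M(X,G)}q(\mu)$ and $M_{f}(X,G)=\{\mu\in M(X,G):q(\mu)=P^{\omega}(\pi,f,G)\}=\{\mu:q(\mu)\ge P^{\omega}(\pi,f,G)\}$. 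For (1), I would first record that $\mu\mapsto h^{\omega}_{\mu}(\pi,G)$ is affine: $h_{\mu}(G,X)$ is affine in $\mu$ (as used in the proof of Theorem~\ref{th4.4}), $\mu\mapsto\pi_{*}\mu$ is affine and $h_{\,\cdot\,}(G,Y)$ is affine, hence $\mu\mapsto h_{\pi_{*}\mu}(G,Y)$ is affine, and the sum is affine; since $\mu\mapsto\omega\int f\,d\mu$ is linear, $q$ is affine. Two members of $M_{f}(X,G)$ share the value $P^{\omega}(\pi,f,G)$, so any convex combination again has that $q$-value and lies in $M_{f}(X,G)$, giving convexity.

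For (2), (3) and (4) the engine is the ergodic decomposition. Given $\mu\in M(X,G)$ there is a Borel probability measure $\tau$ on $E(X,G)$ with $\mu=\int\nu\,d\tau(\nu)$, and for amenable $G$ one has $h_{\mu}(G,X)=\int h_{\nu}(G,X)\,d\tau(\nu)$ and $h_{\pi_{*}\mu}(G,Y)=\int h_{\pi_{*}\nu}(G,Y)\,d\tau(\nu)$ (the hypothesis $h_{top}^{\omega}(\pi,G)<\infty$ keeps all these quantities finite), so $q(\mu)=\int q(\nu)\,d\tau(\nu)$. If $\mu\in M_{f}(X,G)$, then $\int(P^{\omega}(\pi,f,G)-q(\nu))\,d\tau(\nu)=0$ with a non-negative integrand, so $q(\nu)=P^{\omega}(\pi,f,G)$ for $\tau$-a.e.\ $\nu$; this yields (3) at once, and for (2) it shows a non-ergodic $\mu\in M_{f}(X,G)$ is a non-trivial average of points of $M_{f}(X,G)$, hence not extreme, while an ergodic $\mu$ is extreme in $M(X,G)$ and a fortiori in $M_{f}(X,G)$. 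For (4): upper semi-continuity of the entropy map makes $q$ upper semi-continuous on the compact space $M(X,G)$, so $q$ attains its maximum $P^{\omega}(\pi,f,G)$ and $M_{f}(X,G)\neq\emptyset$, while $M_{f}(X,G)=q^{-1}([P^{\omega}(\pi,f,G),\infty))$ is closed, hence compact.

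For (5), I would first treat an exact coboundary: for $\mu\in M(X,G)$ and $g\in G$ one has $\int(b\circ g-b)\,d\mu=0$ by $G$-invariance, and $P^{\omega}(\pi,h+b\circ g-b,G)=P^{\omega}(\pi,h,G)$ by Proposition~\ref{prop 4.1}(6), so $q$ is unchanged and $M_{h+b\circ g-b}(X,G)=M_{h}(X,G)$. In general, if $P^{\omega}(\pi,\cdot,G)\equiv\infty$ then $M_{f}(X,G)=M_{h}(X,G)=\{\mu:h^{\omega}_{\mu}(\pi,G)=\infty\}$ independently of the potential; otherwise $P^{\omega}(\pi,\cdot,G)$ is finite, and writing $f-h-c=\lim_{n}(b_{n}\circ g_{n}-b_{n})$ in $\|\cdot\|_{\infty}$, the Lipschitz bound of Proposition~\ref{prop 4.1}(4) gives $P^{\omega}(\pi,f-c,G)=\lim_{n}P^{\omega}(\pi,h+b_{n}\circ g_{n}-b_{n},G)=P^{\omega}(\pi,h,G)$, and Proposition~\ref{prop 4.1}(5) then yields $P^{\omega}(\pi,f,G)=P^{\omega}(\pi,h,G)+\omega c$. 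For fixed $\mu\in M(X,G)$, $\int(f-h-c)\,d\mu=\lim_{n}\int(b_{n}\circ g_{n}-b_{n})\,d\mu=0$, so $\int f\,d\mu=\int h\,d\mu+c$; substituting, $q(\mu)=P^{\omega}(\pi,f,G)$ if and only if $h^{\omega}_{\mu}(\pi,G)+\omega\int h\,d\mu=P^{\omega}(\pi,h,G)$, i.e.\ $M_{f}(X,G)=M_{h}(X,G)$.

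The step I expect to be the main obstacle is the ergodic-decomposition input used in (2)--(3): verifying $h_{\mu}(G,X)=\int h_{\nu}(G,X)\,d\tau(\nu)$ along the ergodic decomposition for a general countable amenable $G$ is the one genuinely non-elementary ingredient, resting on the ergodic decomposition theorem for amenable actions together with the (upper-)additive and affine behaviour of amenable measure-theoretic entropy; I would cite this rather than reprove it. Everything else is a formal consequence of Theorem~\ref{thm 1.1} and Proposition~\ref{prop 4.1}.
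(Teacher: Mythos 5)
Your proposal is correct and follows essentially the same route as the paper: establish that $h^{\omega}_{\mu}(\pi,G)$ decomposes as $\int h^{\omega}_{m}(\pi,G)\,d\tau(m)$ over the ergodic decomposition, then derive all five statements formally from Theorem~\ref{thm 1.1} and Proposition~\ref{prop 4.1}. The only (minor) difference is that the paper derives the ergodic-decomposition formula for $h_{\mu}(G,X)$ explicitly from the cover-level identity $h_{\mu}(G,\mathcal{U})=\int h_{m}(G,\mathcal{U})\,d\tau(m)$ of Huang--Ye--Zhang together with Lemma~\ref{lem 2.3} and monotone convergence, whereas you cite it; your write-up is otherwise more detailed than the paper's in the formal deductions.
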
  
 \begin{proof}
 	Let  $\mu \in M(X,G)$ and $\mu=\int_{E(X,G)}md\tau(m)$ be  the ergodic decomposition  of $\mu$ and $\mathcal{U} \in C_X^o$.  Then by \cite[Theorem 3.13]{hyz11}
 	$h_\mu(G,\mathcal{U})=\int_{E(X,G)}h_m(G,\mathcal{U})d\tau(m).$
 	Choose a sequence of finite open covers $\{\mathcal{U}_n\}_{n\geq 1}$ of  $X$ with $\mathcal{U}_{n+1}\succ \mathcal{U}_n$ and $\diam \mathcal{U}_n \to 0$ as $n$ tends $\infty$. Then by Lemma \ref{lem 2.3} $\lim_{n \to \infty}h_{\mu}(G,\mathcal{U}_n)=h_{\mu}(G,X)$ for all $\mu \in M(X,G)$. Using the monotone convergence theorem, we  have
 	$h_\mu(G,X)=\int_{E(X,G)}h_m(G,X)d\tau(m).$ Since $\pi_{*}\mu=\int_{E(Y,G)}\theta d\pi_{*}\tau(\theta)$ by [Appendix, Theorem A], so
 	\begin{align*}
 	h^{\omega}_{\mu}(\pi,G)&=\omega \int_{E(X,G)}h_m(G,X)d\tau(m) +(1-\omega) \int_{E(Y,G)}h_\theta(G,Y)d\pi_{*}\tau(\theta)\\
 	&=\omega \int_{E(X,G)}h_m(G,X)d\tau(m) +(1-\omega) \int_{E(X,G)}h_{{\pi_{*}}m}(G,Y)d\tau(m)\\
 	&=\int h^{\omega}_{m}(\pi,G)d\tau(m).
 	\end{align*}
 
  Then the proof  can be proved by using Theorem \ref{thm 1.1}, Proposition \ref{prop 4.1} and   ergodic decomposition for $h^{\omega}_{\mu}(\pi,G).$   
 \end{proof}

\begin{df}
	Let  $\pi:(X,G)\rightarrow (Y,G)$ be a factor map between two $G$-systems with $h^\omega_{top}(\pi,G)<\infty$, $\omega \in (0,1]$ and  $f\in C(X,\mathbb{R})$.  A tangent functional to $P^{\omega}(\pi,\cdotp,G)$ at $f$ is a finite signed measure $\mu:\mathcal{B}(X)\rightarrow \mathbb{R}$ such that $$P^{\omega}(\pi,f+h,G)-P^{\omega}(\pi,f,G)\geq \omega\int h d\mu$$ for all $ h\in C(X,\mathbb{R})$.
	 Let $t_f(X,G)$ denote the collection of all tangent functionals to $P^{w}(\pi,\cdotp,G)$ at $f$.
\end{df}

\begin{rem}
	\begin{enumerate}
		\item  Riesz representation states that  each $L\in C(X,\mathbb{R})^*$  is  the form of $L(f)=\int f d\mu$ for any $f\in C(X,\mathbb{R})$.  Then  $C(X,\mathbb{R})^*$ can be identified with the set of all finite signed measures on $(X,\mathcal{B}(X))$. Hence the tangent functionals to $P^{w}(\pi,\cdotp,G)$ at $f$ can be regarded as  the elements of $C(X,\mathbb{R})^*$ satisfying $\omega L(h)\leq P^{\omega}(\pi,f+h,G)-P^{\omega}(\pi,f,G)$ for all $h\in C(X,\mathbb{R})$.
		\item   $t_f(X,G)$ is not empty. Let the constant  linear function space $C:=\{c:c\in \mathbb{R}\}$ on $X$. Then  we have $\gamma(c):=\omega\int f d\mu\leq P^{\omega}(\pi,f+c,G)-P^{\omega}(\pi,f,G)$ for all $c\in C$. Using Hahn-Banach theorem,  one can extend $\gamma$ on $C$ to an element of $C(X,\mathbb{R})^{*}$ dominated by the convex function $h \mapsto P^{\omega}(\pi,f+h,G)-P^{\omega}(\pi,f,G)$.
	\end{enumerate}
\end{rem}

\begin{thm}\label{th5.3}
Let  $\pi:(X,G)\rightarrow (Y,G)$ be a factor map between two $G$-systems with $h^\omega_{top}(\pi,G)<\infty$, $\omega \in (0,1]$ and  $f\in C(X,\mathbb{R})$.  Then
\begin{enumerate}
	\item   ${M}_{f}(X,G)\subset t_f(X,G)\subset {M}(X,G)$.
	\item    If the entropy map $\mu\in M(X,G)\mapsto h^{\omega}_{\mu}(\pi,G)$ is upper semi-continuous on  $t_f(X,G)$,  then ${M}_{f}(X,G)= t_f(X,G)$. 
	\item If the entropy map $\mu\in M(X,G)\mapsto h^{\omega}_{\mu}(\pi,G)$ is upper semi-continuous on  $M(X,G)$,  then  there is a dense subset  $\mathcal{S}$ of $C(X,\mathbb{R})$ such that ${M}_{f}(X,G)$ is a singleton  for  each $f\in \mathcal{S}$. 
\end{enumerate}
 
\end{thm}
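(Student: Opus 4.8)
The plan is to mimic the classical proofs of the analogous statements for ordinary topological pressure (as in Walters \cite{w82}) while carrying along the weighted structure, which is legitimate because Theorem \ref{thm 1.1} gives the variational principle $P^{\omega}(\pi,f,G)=\sup_{\mu\in M(X,G)}\{h^{\omega}_{\mu}(\pi,G)+\omega\int f\,d\mu\}$ and because $h^{\omega}_{\mu}(\pi,G)=\omega h_{\mu}(G,X)+(1-\omega)h_{\pi_{*}\mu}(G,Y)$ is affine in $\mu$ (this affinity was established in the proof of the equilibrium-states theorem above, using the ergodic decomposition and \cite[Theorem 3.13]{hyz11}). I would prove the three parts in order.

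For part (1), the inclusion $t_f(X,G)\subset M(X,G)$ follows from the characterization of invariant probability measures in the theorem proved earlier in Section 4: if $\mu\in t_f(X,G)$, then applying the defining inequality $\omega\int h\,d\mu\leq P^{\omega}(\pi,f+h,G)-P^{\omega}(\pi,f,G)$ with $h$ replaced by $th$ and letting $t\to\pm\infty$, together with the Lipschitz and normalization estimates from Proposition \ref{prop 4.1} (parts (4), (5), (6)), forces $\omega\int h\,d\mu\leq P^{\omega}(\pi,h,G)$ for all $h$, hence $\mu\in M(X,G)$ by that characterization; here one uses $h^\omega_{top}(\pi,G)<\infty$ so that $P^{\omega}(\pi,\cdot,G)$ is everywhere finite (Proposition \ref{prop 4.1}(3)). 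For the inclusion $M_f(X,G)\subset t_f(X,G)$, if $\mu$ is an equilibrium state for $f$ then for any $h\in C(X,\mathbb{R})$, by Theorem \ref{thm 1.1} applied to $f+h$,
$$P^{\omega}(\pi,f+h,G)\geq h^{\omega}_{\mu}(\pi,G)+\omega\int(f+h)\,d\mu=P^{\omega}(\pi,f,G)+\omega\int h\,d\mu,$$
which is exactly the tangent-functional inequality.

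For part (2), I already have $M_f(X,G)\subset t_f(X,G)$ from part (1), so it remains to show $t_f(X,G)\subset M_f(X,G)$ under the upper semicontinuity hypothesis. Given $\mu\in t_f(X,G)$, which by part (1) lies in $M(X,G)$, I would apply the tangent inequality with $h=t(g-f)$ for $g\in C(X,\mathbb{R})$, $t>0$: dividing by $t$ and using Proposition \ref{prop 4.1}(4) to pass to the limit, one derives $\omega\int(g-f)\,d\mu\leq P^{\omega}(\pi,g,G)-P^{\omega}(\pi,f,G)$, i.e. $P^{\omega}(\pi,f,G)-\omega\int f\,d\mu\leq P^{\omega}(\pi,g,G)-\omega\int g\,d\mu$ for all $g$; taking the infimum over $g$ gives $P^{\omega}(\pi,f,G)-\omega\int f\,d\mu\leq\inf_g\{P^{\omega}(\pi,g,G)-\omega\int g\,d\mu\}$, and by Theorem \ref{th4.4} (whose hypothesis is met because the entropy map is upper semicontinuous at $\mu$) the right-hand side equals $h^{\omega}_{\mu}(\pi,G)$. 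Combined with Theorem \ref{thm 1.1} this yields $P^{\omega}(\pi,f,G)=h^{\omega}_{\mu}(\pi,G)+\omega\int f\,d\mu$, so $\mu\in M_f(X,G)$.

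For part (3), the idea is the standard convexity/differentiability argument: $P^{\omega}(\pi,\cdot,G)$ is a convex continuous function on the separable Banach space $C(X,\mathbb{R})$ (Proposition \ref{prop 4.1}(4)), so by Mazur's theorem (or the separable version of the theorem on Gâteaux differentiability of continuous convex functions) the set $\mathcal{S}$ of points where it has a unique tangent functional is a dense $G_\delta$, hence dense; at such an $f$, $t_f(X,G)$ is a singleton, and by part (2)—applicable since the entropy map is upper semicontinuous on all of $M(X,G)\supset t_f(X,G)$—we get $M_f(X,G)=t_f(X,G)$ is a singleton. The main obstacle I anticipate is purely bookkeeping: making sure every place where the classical proof invokes $\omega\int h\,d\mu$ rather than $\int h\,d\mu$ is handled consistently (the factor $\omega>0$ is harmless but must be tracked), and citing the correct earlier results (the measure-characterization theorem of Section 4 for the $M(X,G)$ containment, Theorem \ref{th4.4} for the equality in part (2)); the genuinely non-trivial analytic input, Mazur's theorem on dense differentiability points of a convex function, is standard and needs no proof here.
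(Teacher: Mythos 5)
Your proposal is correct and follows essentially the same route as the paper: part (1) from Theorem \ref{thm 1.1} plus the elementary properties of $P^{\omega}(\pi,\cdot,G)$ (the paper reproves positivity, normalization and $G$-invariance of a tangent functional directly rather than quoting the Section 4 characterization theorem, but the computations are the same), part (2) from Theorem \ref{th4.4} applied at the tangent measure, and part (3) from the Dunford--Schwartz fact on density of points of unique tangency for a convex function on a separable Banach space. The one caveat is a citation slip in part (1): the inequality $\omega\int h\,d\mu\le P^{\omega}(\pi,h,G)$ comes from taking $t=1$ together with monotonicity and subadditivity (Proposition \ref{prop 4.1}(2),(7)), not from the Lipschitz estimate with $t\to\pm\infty$, which by itself only yields $|\omega\int h\,d\mu|\le\omega\|h\|_{\infty}$ and hence neither positivity nor the claimed bound.
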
 
\begin{proof}
	(1) Let $\mu \in {M}_{f}(X,G)$. If $h\in C(X,\mathbb{R})$, then by Theorem \ref{thm 1.1}
	\begin{align*}
		P^{\omega}(\pi,f+h,G)-P^{\omega}(\pi,f,G)&\geq h^\omega_{\mu}(\pi,G)+\omega\int f +hd\mu\\
		&-h^\omega_{\mu}(\pi,G)-\omega\int f d\mu\\
		&=\omega\int h d\mu.
	\end{align*}
So  ${M}_{f}(X,G)\subset t_f(X,G)$.	
	
	We show $t_f(X,G)\subset {M}(X,G)$. Let $\mu \in t_{f}(X,G)$. Let  $h\geq 0$ and $\epsilon >0$. We have 
	\begin{align*}
		\int (h+\varepsilon)d\mu &=-\int -(h+\varepsilon)d\mu\\
		&\geq -P^{\omega}(\pi,f-(h+\varepsilon))+P^{\omega}(\pi,T,f)\\
		&\geq-\left\lbrace P^{\omega}(\pi,T,f)-\omega\inf (h+\varepsilon)\right\rbrace +P^{\omega}(\pi,T,f)\\
		&=\omega\inf (h+\varepsilon)>0.
	\end{align*}
	Then $\int h d\mu \geq0$  and hence  $\mu$ takes non-negative values. 
	
	We  show $\mu (X)=1$. If $n\in \mathbb{Z}$, then $\omega\int n d\mu\leq P^{\omega}(\pi,f+n,G)-P^{\omega}(\pi,f,G)=\omega n$.  This shows that if $n\geq1$ then $\mu (X)\leq1$ and if $n\leq-1$ then $\mu (X)\geq1$.
	
	We show $\mu \in {M}(X,G)$. If $n\in \mathbb{Z}$, $h\in C(X,\mathbb{R})$ and for all $g\in G$
	\begin{align*}
		\omega n\int (h\circ g- h) d\mu\leq P^{\omega}(\pi,f+n(h\circ g- h),G)-P^{\omega}(\pi,f,G))=0.
	\end{align*}
	If $n>0$, then  $\int h\circ g d\mu \leq \int h d\mu$. If $n<0$,  then $\int h\circ g d\mu \geq \int h d\mu$.  This shows that  $\int h\circ g d\mu = \int h d\mu$ and hence $\mu \in {M}(X,G)$. 
 
	(2) We  show ${M}_{f}(X,G)=t_f(X,G)$.  It suffices to show ${t}_{f}(X,G)\subset M_f(X,G)$. Let  $\mu \in t_{f}(X,G)$.  Then $$P^{\omega}(\pi,f+h,G)-\omega\int (f+h) d\mu \geq P^{\omega}(\pi,f,G)-\omega\int f d\mu $$ 
	 for all $ h\in C(X,\mathbb{R})$. Notice that $h$ is arbitrary.  By  Theorem \ref{th4.4}, we  have  $h^\omega_{\mu}(\pi,G)\geq P^{\omega}(\pi,f,G)-\omega\int f d\mu$  and hence  $\omega\int f d\mu +h^\omega_{\mu}(\pi,G)\geq P^{\omega}(\pi,f,G)$. Together with Theoroem \ref{thm 1.1},  $\mu \in {M}_{f}(X,G)$.
	 
	 (3) By (2),  we know that ${M}_{f}(X,G)= t_f(X,G)$. Then (3) holds by the  fact that \cite{ds58} the convex function $P^{\omega}(\pi,\cdot,G)$ on a separable space Banach space $C(X,\mathbb{R})$ has a unique  tangent functional at a dense set of points.
\end{proof}

\section*{Appendix} \label{thma}
Recall that a $\mu \in M(X)$ is said   to be a   \emph{$G$-invariant Borel probability measure}  if $\mu(A)=\mu(gA)$ for  all $A\in \mathcal{B}_X$ and $g\in G$. A measure  $\mu \in M(X,G)$ is said to be \emph{$G$-ergodic} if $G$-invariant Borel set   B (i.e $gB=B$ for all $g \in G$)  has measure $0$ or $1$. It is well-known that $E(X,G)$ is exactly the extreme points of $M(X,G)$.  Although the following  theorem is well-known for $\mathbb{Z}$-actions,  the proof seems to  to be absent for amenable group actions. Here, we provide a  proof for  completion.
\begin{thm1}
	Let $\pi:(X,G)\rightarrow (Y,G)$ be a factor map between two $G$-systems.  Then
	$$\pi_{*}(M(X,G))=M(Y,G),
	\pi_{*}(E(X,G))=E(Y,G)$$
\end{thm1}

\begin{proof}
	It is clear  that $\pi_{*}(M(X,G))\subset M(Y,G)$ by definition.  Let $\nu\in M(Y,G)$. We define a linear functional  $L_{\nu}$ on the subspace $\{f\circ\pi:f\in C(Y,\mathbb{R})\}$ of $C(X,\mathbb{R})$  by
	$$L_{\nu}(f\circ\pi )=\int fd\nu.$$ Then  $L_{\nu}(1)=1$ and  $||L_{\nu}||=1$. By Hahn-Banach theorem,  there  exists a  continuous linear functional $L$ on $C(X,\mathbb{R})$ so that $||L||=1$ and  $L(f\circ\pi)=L_{\nu}(f\circ\pi).$  Let $f\in C(X,\mathbb{R})$ with $f\geq 0$ and put $a:=||f||_{\infty}+1$. Notice that  $f\geq 0$. Then  $|L(a-f)|\leq ||a-f||_{\infty}\leq a$ and hence
	\begin{align*}
		L(f)=L(f-a+a)\geq a-|L(a-f)|\geq0.
	\end{align*}
	
	By  Riesz's representation theorem, there exists $\mu_0\in M(X)$ such that $L(f)=\int fd\mu$ for all $f\in C(X,\mathbb{R})$.  This yields that for  each $f\in C(Y,\mathbb{R})$,
	$$\int fd\pi_{*}\mu_0=\int f\circ\pi d\mu_0=L(f\circ\pi)=L_{\nu}(f\circ\pi)=\int fd\nu.$$
	Hence, $\pi_{*}\mu_0=\nu$. 
	
	Let $\{F_n\}_{n\ge1}$ be a F\o lner sequence of $G$  and define
	$$\mu_n=\frac{1}{|F_n|}\sum_{g\in F_n}\mu_0\circ g^{-1}.$$
	Without loss of generality, we assume that $\mu_n \mapsto \mu\in M(X,G)$ in weak$^{*}$ topology. Then for  each $f\in C(Y,\mathbb{R})$,
	
	\begin{align*}
		\int fd\pi_{*}\mu=\int f\circ\pi d\mu&=\lim\limits_{n \to \infty}\int f\circ\pi d\mu_n= \lim\limits_{n \to \infty} \frac{1}{|F_n|}\sum_{g\in F_n}\int f\circ\pi\circ g d\mu_0\\
		&=\lim\limits_{n \to \infty} \frac{1}{|F_n|}\sum_{g\in F_n}\int f\circ g\circ\pi d\mu_0=\int fd\nu.
	\end{align*}
	This shows that $ M(Y,G)\subset \pi_{*}(M(X,G))$.  
	
	The left for us is to show $E(Y,G)\subset \pi_{*}(E(X,G))$. Let $\nu\in M(Y,G)$. Then $\pi^{-1}(\nu)$ is a non-empty  closed  convex set. Moreover,   the extreme point of  $\pi^{-1}(\nu)$ is  also  the extreme point of $M(X,T)$. Otherwise,  there is a extreme point $\mu \in \pi^{-1}(\nu)$ such that  $\mu=p\mu_1+(1-p)\mu_2$, where   $0<p<1$ and $\mu_1,\mu_2\in M(X,T)$ do not belong to  $\pi^{-1}(\nu)$. Then
	$$\nu=\pi_{*}\mu=p\pi_{*}\mu_1+(1-p)\pi_{*}\mu_2,$$
	a contradiction.
	
\end{proof}

\section*{Acknowledgement} 

\noindent The work was supported by the
National Natural Science Foundation of China (Nos.12071222 and 11971236). The third author was also supported by Postgraduate Research $\&$ Practice Innovation Program of Jiangsu Province (No. KYCX23$\_$1665).  The work was also funded by the Priority Academic Program Development of Jiangsu Higher Education Institutions.  We would like to express our gratitude to Tianyuan Mathematical Center in Southwest China(No.11826102), Sichuan University and Southwest Jiaotong University for their support and hospitality.  

%\bibliographystyle{alpha}
%\bibliography{universal_bib}

\end{document}